%
%
%

\documentclass[graybox]{svmult}

\usepackage{cite}
\usepackage{mathptmx}       
\usepackage{helvet}         
\usepackage{courier}        
\usepackage{type1cm}        
%
\usepackage{makeidx}         
\usepackage{graphicx}        
\usepackage{multicol}        
\usepackage[bottom]{footmisc}
\usepackage[tbtags]{amsmath}
\usepackage{amsfonts,amssymb,mathrsfs,amscd}

\usepackage{graphicx}

\usepackage[matrix,arrow,curve]{xy}



\begin{document}

\title*{Rational and $p$-adic analogs \\of J.H.C. Whitehead's conjecture}
\titlerunning{Rational and $p$-adic analogs of J.H.C. Whitehead's conjecture}
\author{A.\,M.~Mikhovich}

\institute{Andrey Mikhovich \at
              Moscow Center for Fundamental and Applied Mathematics, Moscow, Russia \\
              \email{mikhandr@mail.ru}\\           
          }


\maketitle

\abstract{
We show that subpresentations of aspherical prounipotent presentations over fields of zero characteristics and subpresentations of aspherical pro-$p$-presentations are aspherical; an application to subpresentations of aspherical discrete presentations is also included.
Following Bousfield-Kan, Quillen and Sullivan the results are regarded as affirmative answers to rational and $p$-adic analogs of J.H.C. Whitehead's conjecture.}


\section{Introduction}
\label{s0}
J.H.C. Whitehead`s conjecture (also known as Whitehead's asphericity conjecture) is a claim in algebraic topology, it states that every connected subcomplex of a two-dimensional aspherical $CW$-complex is aspherical. The question was formulated by J. H. C. Whitehead in 1941 \cite{W} and is still an open problem with many implications in topology, noncommutative geometry and group theory \cite{BH}, \cite{Bog}. \cite{Ro}.
Despite the maturity of the problematic, that arose to prove the asphericity of the knot complement in the 3-sphere, the hypothesis still emphasizes today that ``the level of our ignorance'' \cite{Bass} in this area remains extremely high after the last 40 years. If the Whitehead's asphericity conjecture is true then there is a counterexample to the Eilenberg-Ganea conjecture \cite{BB} (the group of cohomological dimension 2 that does not have 2-dimensional Eilenberg-Maclane space), promising negative solutions to a bunch of other important open problems in algebraic topology and noncommutative geometry, such as the hypotheses of Atiyah, Kadison-Kaplansky and Novikov.

Collapsing the maximal subtree of the 1-skeleton of a 2-dimensional $CW$-complex $K$ into a point does not change the homotopy type of $K$ and therefore the statement of J.H.C. Whitehead's asphericity conjecture is equivalent to the statement that each subpresentation of the aspherical presentation of a group is aspherical.
A presentation $(X|R)$ of a discrete group $G$ is by definition an exact sequence
\begin{equation}
1 \rightarrow R \rightarrow \Phi \xrightarrow{\pi} G \rightarrow 1 \label{eq1}
\end{equation}
in which $\Phi=\Phi(X)$ is the free group with a set $X$ of generators, and $R$
is a normal subgroup in $\Phi$ normally generated by a set $Y\subset R$ of defining relations. By a subpresentation of the presentation $(X|R)$ we mean the presentation $(X|R_0), R_0\subset R,$ where $R_0$ is normally generated by the subset $Y_0\subset Y$.
A group presentation $(X\mid R)$ is called \emph{aspherical} if the two-dimensional $CW$-complex $ K(X\mid R)$ associated with this presentation is aspherical, i.e. $\pi _{2}(K(X\mid R))=0$, the exact construction of $ K(X\mid R)$ will appear in Section \ref{s1.0}, we also refer the reader to the standard homotopy theory textbooks as \cite{Hut}.

A pro-$ p $-group is a group isomorphic to the inverse limit of finite $ p $-groups.
This is a topological group (with inverse limit topology)
which is compact and totally disconnected.
For such groups one has a presentation theory similar in many aspects to the combinatorial
theory of discrete groups.
The origins of cohomological and combinatorial theory of pro-$p$-groups lie in the early
papers of J.-P. Serre and J. Tate and they took the modern form in the monograph \cite{Se4}.
Several important results in discrete group theory arose as analogs of similar statements
on pro-$p$-groups. For example, the celebrated Stallings theorem,
stating that a discrete group is free if and only if its cohomological dimension
equals one, arose from the analogy, proposed by J.-P. Serre,
with the known result from pro-$p$-group theory.
Such parallelism repeats again and again especially in problems of (co)homological origin. For instance, Shafarevich's paradigm \cite[\S18,C]{Sh} about why Demushkin pro-$p$-groups have similar defining relations as discrete presentations of surface groups is solved cohomologically \cite{Ec} because as in both discrete and pro-$ p $ cases, these groups are Poincar\'{e} duality groups of dimension 2.
We also mention a recent solution of Hanna Neumann conjecture, where cohomological considerations of pro-$p$-groups help to resolve the old ``discrete'' problem \cite{ZZ}.

Serre`s philosophy began to gain ground in D. Sullivan's works \cite[Chapter 3]{Su1}, \cite[Theorem, p.53]{Su} for nilpotent spaces of finite type in the form of the so-called ``arithmetic squares''. Sullivan uses completions or localizations in order to ``fracture'' a homotopy type into ``mod-p components'' together with coherence information over the rationals.
These ideas were further developed by Bousfield and Kan for nilpotent simplicial spaces \cite[Lemma 6.2, Lemma 6.3, Lemma 8.1]{BK} and it turned out that such space $X$ (we refer the reader to \cite{May} concerning simplicial methods) can be performed as a pullback of the following diagram
$$\xymatrix{X \ar[r]\ar[d]  & \prod_{p\in \pi} (\mathbb{F}_p)_{\infty}X\ar[d] \\
\mathbb{Q}_{\infty}X\ar[r] & \mathbb{Q}_{\infty}(\prod_{p\in \pi} (\mathbb{F}_p)_{\infty}X)},$$
where $\pi$ is a set of all primes, the top map is induced by the so-called $(\mathbb{F}_p)_{\infty}$-completions and the bottom map is induced by $\mathbb{Q}_{\infty}$-completion of the top map (``arithmetic square'' also occurs for spectra \cite{Bau}).
$\mathbb{Q}_{\infty}$ and $(\mathbb{F}_p)_{\infty}$-completion functors are related to the so-called $p$-adic and $k$-prounipotent (the exact Definition \ref{d5} of the prounipotent group will appear in Section \ref{s1.2}, where we will also see that pro-$p$-groups can be regarded as prounipotent affine group schemes over $\mathbb{F}_p$) completions of discrete groups, we recall the definition
\begin{definition}\label{d4}\cite[A.2.]{HM2003},\cite[2]{Mikh2016}
Let us fix a group $G$ and a field $k$ of zero characteristics, define the prounipotent completion of $G$ as the following universal diagram, in which $\rho$ is a
Zariski dense homomorphism from $G$ to the group of $k$-points
of a prounipotent affine group scheme $G_u^{\wedge}$:
$$\xymatrix @R=0.5cm{
                &         G^{\wedge}_u(k)  \ar[dd]^{\tau}     \\
 G \ar[ur]^{\rho} \ar[dr]_{\chi}                 \\
                &         H(k)              }$$
We require that for each Zariski dense homomorphism
$\chi$ there exists a unique homomorphism $\tau$ of prounipotent groups making the diagram commutative.
When $k=\mathbb{F}_p$, we define the prounipotent completion of $G$ as the pro-$p$-completion, which is a pro-$p$-group $G^{\wedge}_p$ obeys the same universal diagram, where $H(k)$ is a finite $p$-group ($G^{\wedge}_p\cong\varprojlim_{|G/U_{\lambda}|=p^{k_{\lambda}}} G/U_{\lambda}$, see \cite[Example 2.1.6]{ZR} ).
\end{definition}

Bousfield and Kan show \cite[Proposition 4.1]{BK} that for each simplicial reduced space $X$ of finite type one can construct $R_{\infty}$-completion (we are interested in the cases when $R=\mathbb{Q}$ or $R=\mathbb{F}_p$) in three steps:
\begin{description}
\item[i]{Replace $X$ with the so-called ``Kan's loop group'' $GX$ \cite[\S 26]{May} - a simplicial group that has the homotopy type of ``loops on $X$'';}
\item[ii]{Apply the $R$-prounipotent completions functor dimension-wise, that is, we obtain a simplicial prounipotent group $(GX)^{\wedge}_R$;}
\item[iii]{Take the classifying space $\overline{W}(GX)^{\wedge}_R$ of $(GX)^{\wedge}_R$.}
\end{description}

The Kan loop-group functor $G$ and `the classifying space functor $\overline{W}$ are the adjoint functors \cite[Theorem 27.1]{May} and, in addition, they establish the so-called Quillen's equivalence of the corresponding homotopy categories \cite[Chapter 2, \S3]{Qui1}. Thus, if we understand $(GX)^{\wedge}_R$, then we understand $R_{\infty}X=\overline{W}(GX)^{\wedge}_R$ and therefore by rational and $p$-adic analogs of a discrete problem we mean the corresponding problems of prounipotent and pro-$p$ presentations (in particular, the conjecture of J.H.C. Whitehead).

Bousfield and Kan viewed the rational case as ``often take care of itself'' \cite[p.2]{BK}, in contrast, recent results from \cite{Mikh2016}, \cite{Mikh2017} show that this is not the case in two-dimensional homotopy.
For example, one-relator presentations over fields of zero characteristics have prounipotent cohomological dimention two i.e. behave as their discrete analogs. In contrast, there are uncountably many non isomorphic finitely generated pro-$p$-groups with a single defining relation \cite{Rom} (this hypothesis is popularly known as the Lubotzky conjecture) and it seems that they are much more complicated \cite[4]{Mikh2017} than their discrete one-relator cousins. Since Berrick and Hillman \cite[Corollary 4.8]{BH} proved that Whitehead's asphericity conjecture is equivalent to the statement about the rational cohomological dimension, zero characteristics can also be important for aspherical presentations.

Cohomological and presentation theories are similar for pro-$p$-groups and for prounipotent groups in zero characteristics (see Section \ref{s3.0}), moreover, there is a connection Remark \ref{r2} between zero and positive characteristics. And
as Serre`s ideology usually works with problems of (co)homological flavour, we believe that there is a prounipotent approximation theory for homology types of quasirational presentations (especially in the rational case).

A short, but rather inprecise description of the main results of this paper is the following (see Theorem \ref{t4} in Section \ref{s3.3})
\begin{theorem}
Let $(X|R_0)$ be a subpresentation of an aspherical prounipotent presentation $(X|R)$  (if the base field $k$ has a positive characteristics, we assume that $(X|R)$ is a pro-$p$-presentation), then $(X|R_0)$ is also aspherical.
\end{theorem}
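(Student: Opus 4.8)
The plan is to translate asphericity into a freeness statement for the completed relation module and then to check that this freeness is inherited when one restricts from the full relator set $Y$ to the subset $Y_0$. As in the construction of Section~\ref{s1.0}, I write the presentation with $\Phi=\Phi(X)$ the free (prounipotent, resp.\ pro-$p$) group and $R$ the closed normal subgroup generated by $Y$, and I set $G_0=\Phi/R_0$. By the (co)homological dictionary recorded in Section~\ref{s3.0}, asphericity of $(X\mid R)$ — the vanishing of $\pi_2$ of the associated complex — is equivalent to the completed relation module $R^{\mathrm{ab}}$ being a \emph{free} $k[[G]]$-module on the images of $Y$. Thus it suffices to prove: if $R^{\mathrm{ab}}$ is free on $Y$ over $k[[G]]$, then the relation module $R_0^{\mathrm{ab}}$ of the subpresentation is free on $Y_0$ over $k[[G_0]]$.

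First I would record two structural inputs. Since $\Phi$ has cohomological dimension one in these categories, its closed normal subgroups $R_0\subset R$ are themselves free (the Nielsen--Schreier analog), so the higher homology that would otherwise obstruct the comparison vanishes. Secondly, I would use the Magnus embedding of $\Phi$ into the completed group algebra $A=k[[\Phi]]$ together with its filtration by powers of the augmentation ideal $I$: the associated graded $\mathrm{gr}\,A$ is the free (tensor) algebra $T(V)$ on $V=I/I^{2}$, it depends only on $\Phi$ and not on the chosen quotient, and every relator $r$ acquires a well-defined initial form (symbol) $\sigma(r)\in\mathrm{gr}\,A$.

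The heart of the argument is the reformulation of asphericity in terms of these symbols, which is precisely the quasirationality criterion underlying the completions of Definition~\ref{d4}: the relation module is free on $Y$ exactly when the symbols $\{\sigma(y):y\in Y\}$ are independent in the relevant graded module over $\mathrm{gr}\,A$. Granting this, the theorem becomes transparent, because $\{\sigma(y):y\in Y_0\}$ is a \emph{subset} of an independent family and is therefore itself independent — discarding members of an independent family produces no new syzygies. It then remains to lift this graded independence back to genuine freeness of $R_0^{\mathrm{ab}}$ over $k[[G_0]]$, which I would carry out by a convergence/Nakayama argument valid because $A$ and $k[[G_0]]$ are pseudocompact (local) rings and the filtration is complete and separated.

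The step I expect to be the main obstacle is establishing the equivalence between $\pi_2=0$ and independence of symbols, i.e.\ controlling the passage between the two base groups $G$ and $G_0$ at the graded level. Naive base change along the surjection $k[[G_0]]\twoheadrightarrow k[[G]]$ is not flat and cannot be used directly; instead the comparison must be made inside the common graded ring $\mathrm{gr}\,A=T(V)$, where the characteristic-zero hypothesis (through the Poincar\'{e}--Birkhoff--Witt description of $T(V)$ as the enveloping algebra of the free Lie algebra), resp.\ the pro-$p$ structure, guarantees that independence of initial forms is detected and preserved. This is exactly the feature unavailable for general discrete group algebras, and it is what renders the rational and $p$-adic analogs tractable while the classical Whitehead conjecture remains open.
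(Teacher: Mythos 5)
Your opening reduction coincides with the paper's: Proposition \ref{t2} establishes exactly that asphericity of a prounipotent (or pro-$p$) presentation is equivalent to the relation module $\overline{R}(k)$ being a free topological $\mathcal{O}(G)^*$-module on $(\widehat{Y}^*,*)$. The genuine gap lies in your central claim that this freeness holds \emph{exactly when} the initial forms $\{\sigma(y):y\in Y\}$ are independent in $\mathrm{gr}\,A\cong T(V)$. Only one direction of this is true: graded independence (Anick's strong freeness, Labute's mildness in the pro-$p$ setting) implies freeness of the relation module and $cd\,G\leq 2$, and it does pass to subsets. The converse --- which is the direction your proof actually needs, since asphericity of $(X|R)$ is your \emph{hypothesis} --- fails: mildness is a property of the relator family that is strictly stronger than asphericity. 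An elementary instability already shows this: replacing the relator family $\{r_1,r_2,\dots\}$ by $\{r_1,r_1r_2,\dots\}$ changes neither $R$ nor the freeness of $\overline{R}(k)$, yet if $\deg\sigma(r_1)<\deg\sigma(r_2)$ the new family of initial forms contains $\sigma(r_1)$ twice and is dependent. The paper's normalization in Remark \ref{r9} (that $Y$ lift a basis of $(R/[R,F](k))^{\vee}$) removes only such degree-one collisions, not higher-degree syzygies among initial forms; and no ``quasirationality criterion'' of the kind you invoke is stated or proved anywhere in the paper. So the step you yourself flag as the main obstacle is not merely hard --- as an equivalence it is false, and the subset argument never gets started.

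There is a second conflation worth naming: even granting everything at the graded level, your argument would at best give freeness of the \emph{image} of $\overline{R_0}(k)$ inside $\overline{R}(k)$ as an $\mathcal{O}(G)^*$-module, whereas asphericity of $(X|R_0)$ requires freeness of $\overline{R_0}(k)$ itself over $\mathcal{O}(G_0)^*$, and $\phi:\overline{R_0}(k)\to\overline{R}(k)$ has no a priori reason to be injective. This passage is precisely where the paper spends its machinery, and it does so without any graded analysis: it observes that $\mathrm{Im}\,\phi\cong(\mathcal{O}(G)^*)^{(\widehat{Y_1}^*,*)}$ is free simply because it is the closed submodule of a free module generated by a subset of a free basis; it verifies the coinvariant condition $H^0(G,\mathrm{Im}\,\phi^{\vee})\cong H^0(G_0,H^1(R_0,k_a))$ by computing $(\mathrm{Im}\,\phi)_G\cong k^{(\widehat{Y_1}^*,*)}\cong R_0/[R_0,F](k)$; and it then feeds these two facts into Tsvetkov's criterion (Proposition \ref{t1}), proved via the Lyndon--Hochschild--Serre spectral sequences attached to $1\to H\to G_0\to G\to 1$ and the exact sequences \eqref{eq11}, \eqref{eq12} coming from $1\to R_0\to R\to H\to 1$, to conclude $cd\,G_0\leq 2$ together with freeness of $H=R/R_0$. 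Propositions \ref{t3} and \ref{t2} then convert $cd\,G_0\leq 2$ back into freeness of $\overline{R_0}(k)$ over $\mathcal{O}(G_0)^*$, i.e., asphericity. What your outline does prove, if completed, is the strictly weaker statement that subpresentations of \emph{mild} presentations are aspherical; to capture all aspherical presentations, the cohomological route through the intermediate group $H$ is what the paper supplies and your proposal is missing.
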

D. Anick's paper \cite{An} contain a simply connected version of the rational J.H.C Whitehead's problem i.e. it is assumed that the fundamental group is trivial: ``In rational homotopy we generally consider simply connected spaces only and tensor all homotopy groups with $\mathbb{Q}$''. At the time of writing, it was known, that the rational homotopy theory generalizes, at least, to the case of non-simply-connected nilpotent spaces \cite{BG} and therefore the restriction more reflects the lack of possibility of extending the methods of \cite{An} to a non-simply connected case.

The organization of the paper is as follows. In Section \ref{s1.0} we give a concise introduction to two-dimensional complexes and their second homotopy groups to explain that J.H.C. Whitehead's original statement is equivalent to the purely algebraic problem of crossed modules arising from simplicial presentations. In Section \ref{r1.1} we show that the simplicial presentation is a combinatorial model of based loops $\Omega K(X|R)$ on the standard 2-dimensional complex $K(X|R)$.

Section \ref{s1.1} is dedicated to affine group schemes, their representations and cohomology. We have followed Jantzen's account \cite{Jan} closely, as it fits our purposes perfectly. In Section \ref{s2.3} we derive the Lyndon-Hochschild-Serre spectral sequence ``manually'', since it is in this form that it will be useful for the proof in Section \ref{s1.4}.
Section \ref{s4} contains a new key concept of a topological module, extending the definition given in the R.Hain's pioneering work \cite{Hain1992}.

In Section \ref{s3.0} we have diligently unified the presentation of the theory of prounipotent groups for characteristics zero and for pro-$p$-groups (simplicial prounipotent presentations, subpresentations and prounipotent crossed modules).
This is done in particular to make sure that the parallelism proposed by J.-P. Serre can equally arise from the theory of prounipotent groups over a field of characteristic zero; in this case, the technically sometimes simpler pro-$p$-case (sometimes the results could be obtained along the lines of \cite{CCH}) fits perfectly into the general theory developed in the language of affine group schemes.
The Section also contains the exact statement of Theorem \ref{t4}, the main result of the paper.

In Section \ref{s3} we define relation modules of prounipotent presentations and establish a criterion for asphericity in the relation module language. We prove a prounipotent analogue of Gasch\"{u}tz Lemma and also reduce the theory of non-proper presentations to proper presentations. This part contains a proof of a criterion for cohomological dimension 2 for a prounipotent group, defined by a subpresentation, adapting Tsvetkov's ideas for pro-$p$-groups.

Section \ref{s6} contains an application of Theorem \ref{t4} - Corollary \ref{c4}, which states that Bousfield-Kan completion is aspherical for finite subcomplexes of contractible 2-dimensional complexes. An interpretation of the results in the Baumslag-Dyer-Heller context is also proposed.

In contrast to our previous works \cite{Mikh2017}, \cite{Mikh2016} we use right topological modules, as they are better compatible with the existing topological literature \cite{Jan}, \cite{BH1}.

Let $p\geq2$ be a prime number. We use standard notation: $\mathbb{Z}_p$ for $p$-adic integers; $\mathbb{Q}_p$ for rational $p$-adic numbers; $\mathbb{F}_p$ for a prime field of positive characteristics, i.e. $char(k)=p>0$.

\section{Homotopy theory of discrete presentations}\label{s1.0}
Section \ref{r1.2} contains a summary of the homotopy theory of discrete group presentations, basically we follow the Brown-Huebschmann approach \cite{BH1}. In Lemma \ref{l3.3} we show that the simplicial purely algebraic definitions of a crossed module of a discrete presentation and of its second homotopy group coincide with the classical ones going back to Whitehead and Reidemeister. We've also included the Brown-Loday's description in Proposition \ref{l3}, which is convenient for the prounipotent case.

In Section \ref{r1.1} we follow the ideas of Kan, Milnor and Gabriel-Zisman to prove that the simplicial group presentation is indeed a combinatorial model of based loops on $K(X|R)$ (all precise definitions will appear below).
\setcounter{theorem}{0}
\subsection{Presentations, crossed modules and second homotopy groups}\label{r1.2}
Let we are given a presentation $(X|R)$ of the group $G\cong \Phi(X)/R$ and $K(X|R)$ is the standard 2-dimensional $CW$-complex of $(X|R)$ sometimes called the geometric realization of $(X|R)$, which is constructed as follows: it has one 0-cell $*$; 1-cell $e^1_x$ for each element $x\in X$ (hence $\pi_1(K^1,*)$ is the free group $\Phi(X)$ on $X$), and a 2-cell $e^2_y$ for each defining relation $y\in R$, attached by a representative $r_y$ of the relator $y\in R\lhd\Phi(X)$. It is known, that the homotopy type of $K(X|R)$ does not depends on the choice of representative attaching maps for 2-cells.

Let us denote $\widetilde{K}$ - the universal cover of $ K(X\mid R)$, then (since $\pi_1 (\widetilde{K})\cong 1$ and fibers are discrete) the long exact sequence of homotopy groups of the covering implies the isomorphism $\pi_2(\widetilde{K})\cong \pi_2 (K(X\mid R))$. Direct application of Hurewicz's theorem \cite[Theorem 4.37]{Hut} leads to Reidemeister's (1934) homological description of the second homotopy group $$\pi_2 (K(X\mid R))\cong H_2(C(\widetilde{K})),$$ where $C_*(\widetilde{K},\partial)$ is the chain complex of the universal cover $\widetilde{K}$ of $K(X|R)$ \cite[Corollary 2, p. 167]{BH1}.

It turns out that the chain complex $C_*(\widetilde{K})$ of the universal cover $\widetilde{K}$ is $G$-isomorphic to the chain complex $C(X|R)$ of free $G$-modules associated with the presentation $(X|R)$ of the group $G$ \cite[Proposition 9]{BH1}, which is constructed as follows:
$$C(X|R): C_2(X|R)\xrightarrow{d_2} C_1(X|R)\xrightarrow{d_1} C_0(X|R),$$
where $C_0(X|R)=\mathbb{Z}G, C_1(X|R)=\oplus_X \mathbb{Z}G, C_2(X|R)=\oplus_Y \mathbb{Z}G$ with bases as for right $\mathbb{Z}G$-modules respectively 1; $e^1_x, x\in X;$ and $e^2_y, y\in Y.$
Let $\pi:\Phi\rightarrow G, \mathbb{Z}\Phi\rightarrow\mathbb{Z}G$ both be projections determined by the presentation. Then the boundaries are given by
$d_1(e^1_x)=1-\pi x, x\in X$, $d_2(e^2_y)=\sum_X e^1_x\cdot\pi(\partial r_y/\partial x), y\in Y,$
where $\partial r_y/\partial x$ are the so called Fox derivations of $r_y$ \cite[Chapter 4]{BH1}. We need several definitions in order to obtain a purely algebraic definition of the second homotopy group

\begin{definition} \label{d1.1}
By a pre-crossed module one calls a triple $(G_2,G_1,\partial)$, where $G_1,G_2$
are groups, $\partial: G_2 \to G_1$ is a homomorphism of groups, $G_1$ acts on $G_2$
from the right, satisfying the identity
$$\text{CM 1) } \partial(g_2^{g_1}) = g_1^{-1} \partial(g_2) g_1,$$
where the action is written in the form $(g_2,g_1) \to g_2^{g_1}$, $g_2 \in G_2, g_1 \in G_1.$
\end{definition}

\begin{definition}\label{d1.2}
A pre-crossed module is called crossed if in addition for all $g_1, g_2\in G_2$ holds the Peiffer identity
$$\text{CM 2) }g_2^{\partial(g_1)}= g_1^{-1} g_2 g_1.$$
\end{definition}

\begin{definition} \label{d1.3}
A (pre-)crossed module $(G_2,G_1,d)$ is called a free (pre-)crossed module with the base
$Y\in G_2$ if $(G_2,G_1,d)$
has the following universal property with respect to maps $\nu: Y\rightarrow G'_2$ i.e.
for any (pre-)crossed module $(G'_2,G'_1,d')$ and a map $\nu: Y\rightarrow G'_2$
and for any homomorphism of groups $f:G_1\rightarrow G'_1$ such that
$fd(Y)=d'\nu(Y),$ there exists a unique homomorphism
of groups $h:G_2\rightarrow G'_2$ such that $h(Y)=\nu(Y)$
and the pair $(h,f)$ is a homomorphism of (pre-)crossed modules.
\end{definition}

J.H.C. Whitehead's (1949) description of the second homotopy group of a presentation $(X|R)$ uses the notion of a free crossed module of a presentation \eqref{eq1} in the context of relative homotopy group. In our particular case, consider a pair of spaces $(K,L)$ arising from the standard 2-dimensional $CW$-complex $K=K(X|R)$ of the presentation, as indicated above $K(X|R)=L\cup \{e^2_y\}_{y\in Y},$ where $L=K^1(X|R)$ is the 1-dimensional skeleton with the base point $*$. We can take the elements $a_y\in \pi_2(K,L)$ as homotopy classes of characteristic maps $h_y: (E^2,S^1)\rightarrow (K,L)$ of the 2-cells $e^2_y$ together with a choice of paths in $L$, one for each $y,$ joining $h_y(1)$ to $*$. Whitehead's important discovery is that $\pi_2(K,L)\xrightarrow{\partial}\pi_1(L)$
is a free crossed module on the elements $a_y\in \pi_2(K,L),$ $\pi_1(L)$ acts in the standard way \cite[Corollary 14.2]{HSJ}, \cite[4.1]{Hut}.

We give a formal algebraic description of this construction, which is equivalent to the Whitehead's \cite[Theorem 10]{BH1}) and is perfectly convenient in the prounipotent situation. Let $\Phi(X\cup Y)$  be a free group generated by the union of sets $X$ (generators) and $Y$ (elements which one to one correspond to defining relations $r_y$ in $R$ which we identify with the chosen representatives). Consider the normal closure $H$ of $Y$ in $\Phi(X\cup Y)$ and let $\theta: H\rightarrow \Phi(X)$ be a homomorphism of groups defined on basis elements $u^{-1}yu,$ where $y\in Y, r_y\in \Phi(X)$ and $u\in \Phi(X)$  by the rule

$$\theta (u^{-1}yu)= u^{-1}r_yu.$$

The pair $(H,\Phi(X),\theta)$ called a precrossed module of a presentation \eqref{eq1} since it satisfy the axiom
$$\theta (a^u)=u^{-1}\theta(a)u.$$

Now, by definition, the identities among relation are elements from the kernel of $\theta$. Among such elements there are so called Peiffer basic identities which are ``always identities'' for $a,u\in H$
$$a^{\theta(u)}= u^{-1} a u.$$
We call the subgroup $P$ of $H$ generated by all Peiffer basic identities the Peiffer group of a precrossed module. It is normal and $\Phi(X)$-invariant \cite[Proposition 2]{BH1}, therefore, taking the factor group $H/P$, we obtain the crossed module $(H/P,\Phi(X),\widetilde{\theta}),$ where $\widetilde{\theta}$ is induced from $\theta,$ for which obvious universal property holds \cite[Proposition 3, Corollary ]{BH1}.
And therefore the abelianization $\overline{H/P}$ of this free crossed module is a free $\mathbb{Z}[G]$-module on the images of the relations \cite[Proposition 7]{BH1}.
It turns out that the main benefit of the abelianization \cite[Proposition 4]{BH1}
$0\rightarrow\pi_2\rightarrow \overline{H/P}\xrightarrow{\overline{\theta}} \overline{R}\rightarrow 0$
of the free crossed module $0\rightarrow\pi_2\rightarrow H/P\xrightarrow{\widetilde{\theta}} R\rightarrow 1$ of the presentation $(X|R)$ is obtained after embedding $i:\overline{R}\rightarrow \otimes_X \mathbb{Z}G$, which is given by the rule $i(r)=\sum e^1_x \pi(\partial r_y/\partial x)$ \cite[Corollary 1]{BH1}, since it fits into commutative diagram as follows

$$\xymatrix{\overline{C} \ar[r]^{\cong}\ar[d]^{\overline{\theta}} & C_2(X|R) \ar[r]^{\cong} \ar[d]^{d_2} & C_2(\widetilde{K}) \ar[d]^{\partial_2} \\
    \overline{R} \ar[r]^{i} & C_1(X|R) \ar[r]^{\cong} \ar[d]^{d_1} & C_1(\widetilde{K}) \ar[d]^{\partial_1}\\
  & C_0(X|R) \ar[r]^{\cong} & C_0(\widetilde{K}) }.$$
This diagram shows, in particular, that $\pi_2=Ker\;\widetilde{\theta}\cong \pi_2(K(X\mid R)).$

It turns out that the free crossed module $(H/P,\Phi(X),\widetilde{\theta})$ of the presentation $(X|R)$ also arises as the second step on a way of constructing a free simplicial resolution from the given presentation. By definition, a simplicial resolution of a group $ G $ is a free simplicial group $F_{\bullet}$ (i.e. $F_i$ is a free group) such that $\pi_0(F_{\bullet})\cong G$ and $\pi_i(F_{\bullet})=0$ for $i>0$, this concept is analogous to the Eilenberg - MacLane space $ K (G, 1) $ in the category of simplicial groups. By \textbf{homotopy groups of a simplicial group} $F_{\bullet}$ we understand the homology groups of its \textbf{Moore complex} $(NF_n=\cap^{n-1}_{i=0}Ker\;d_i^n, d_n^n|_{NF_n})$.

There is a ``pas-$\grave{a}$-pas'' method for constructing a free simplicial resolution, which goes back to Andr\'{e} (see \cite[3]{MP} and its bibliography) and is similar to gluing $ n $-dimensional cells along mappings representing nonzero elements of homotopy groups in the process of constructing $K(G,1)$. We will denote the result of first two steps (taking into account that the simplicial group is degenerate in dimensions greater than one) as $F^{(1)}_{\bullet}$:
\begin{equation}\label{2}
\xymatrix{
& \Phi(s_1^1 s_0^1(X)\cup s_0^1(Y)\cup s_1^1(Y)) \ar@<1ex>[r] \ar@<-1ex>[r]_(0.6){d_0^2, d_1^2,d_2^2} \ar[r] & {\Phi(s_0^0(X)\cup Y)} \ar@<-3ex>[l]_(0.4){s_0^1,s_1^1} \ar@<-2ex>[l] \ar@<0ex>[r] \ar@<-1ex>[r]_(0.6){d_0^1,d_1^1} &
\Phi(X) \ar@<-1ex>[l]_(0.4){s_0^0} \ar[r] & G,}
\end{equation}
here $d_0^1, d_1^1, s_0^0$ for $x \in X, y \in Y, r_y \in R$ are defined by the identities
$d_0^1(x)=x,  d_0^1(y)=1, d_1^1(x)=x,  d_1^1(y)=r_y$ and we will identify the image of $s_0^0(X)$ with $X$, i.e. $s_0^0(x)=x$.

Generators $s_1^1 s_0^1(X)\cup s_0^1(Y)\cup s_1^1(Y)$ of $\Phi(s_1^1 s_0^1(X)\cup s_0^1(Y)\cup s_1^1(Y))$ of $F^{(1)}_{\bullet}$ are degenerate by construction.
We denote $R=Im\;d^1_1(Ker \;d_0^1)$, then $G\cong \Phi(X)/R$ and call a pair of free groups $\Phi(s_0^0(X)\cup Y)$ and $\Phi(X)$ connected by the homomorphisms $d_0^1, d_1^1, s_0^0$ as in \eqref{2} \textbf{a simplicial presentation of the group $G$}.
Let's build a free pre-crossed module, starting with \eqref{2}. To this end,
first we go to the Moore complex $N(F^{(1)}_{\bullet})_1\xrightarrow{d_1^1} N(F^{(1)}_{\bullet})_0$ of \eqref{2},
where $N(F^{(1)}_{\bullet})_0=\Phi(X)$, $N(F^{(1)}_{\bullet})_1=Ker\;d_0^1$.
One has $N(F^{(1)}_{\bullet})_1=Ker\; d_0\cong \Phi(Y\times \Phi(X))$  \cite[Proposition 1]{BH} and we get the so called \textbf{free pre-crossed module of
discrete presentation} \eqref{2} on the set $Y$ as a pair
$\Phi(Y\times \Phi(X))\xrightarrow{d_1} \Phi(X),$
where the right action of $\Phi(X)$ is given by $a^u=s_0^0(u)^{-1}as_0^0(u), u \in \Phi(X), a \in \Phi(Y \times \Phi(X))$.

From now on we omit the superscripts in $d^1_i$ and $s^0_0$ since they are already obvious in the 2-dimensional case.
\begin{lemma} \label{l3.3}
Let we are given a presentation \eqref{2}, then there is an isomorphism of crossed modules
$$\left(H/P, \Phi(X),
\theta\right)\cong \left(\frac{Ker\; d_0}{[Ker\; d_0, Ker\; d_1]},\Phi(X),\overline{d_1}\right),$$
arising from the coincidence of the Peiffer subgroup $P$ and $[Ker\; d_0, Ker\; d_1]$ in $Ker\; d_0$.
\end{lemma}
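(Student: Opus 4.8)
The plan is to reduce the whole statement to the single group-theoretic identity $P=[\mathrm{Ker}\,d_0,\mathrm{Ker}\,d_1]$, the commutator being formed inside $F_1=\Phi(X\cup Y)$, because once this is known the isomorphism of crossed modules is essentially tautological. First I would identify the Reidemeister--Whitehead datum with the Moore-complex datum. The face map $d_0\colon F_1\to\Phi(X)$ sends $x\mapsto x$ and $y\mapsto 1$, so its kernel is exactly the normal closure of $Y$, i.e. $\mathrm{Ker}\,d_0=H$; both are free on $Y\times\Phi(X)$ (recalled above), with the generator $(y,u)$ corresponding to $u^{-1}yu$. Under this identification one checks at once that $\theta=d_1|_{H}$ (since $d_1$ sends $x\mapsto x$, $y\mapsto r_y$, whence $d_1(u^{-1}yu)=u^{-1}r_yu=\theta(u^{-1}yu)$) and that the two right $\Phi(X)$-actions coincide, both being conjugation by $s_0^0(\Phi(X))$ inside $F_1$. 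Thus $\overline\theta$ and $\overline{d_1}$ become literally the same map the moment $P=[\mathrm{Ker}\,d_0,\mathrm{Ker}\,d_1]$ is established.

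Next I would bring in the second kernel. Since $d_1s_0^0=\mathrm{id}$, the map $d_1$ is a retraction, $F_1=\mathrm{Ker}\,d_0\rtimes s_0^0(\Phi(X))=\mathrm{Ker}\,d_1\rtimes s_0^0(\Phi(X))$, and $\mathrm{Ker}\,d_1$ is the normal closure of $\{yr_y^{-1}\}$, free on $\{w^{-1}(yr_y^{-1})w:y\in Y,\ w\in\Phi(X)\}$. The crucial device is the \emph{shift} $c_u:=u\,\theta(u)^{-1}$ attached to $u\in H$: one has $d_1(c_u)=\theta(u)\theta(u)^{-1}=1$, so $c_u\in\mathrm{Ker}\,d_1$, and on free generators it restricts to the bijection $w^{-1}yw\mapsto w^{-1}(yr_y^{-1})w$ carrying the basis of $\mathrm{Ker}\,d_0$ onto the basis of $\mathrm{Ker}\,d_1$. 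Substituting $u=c_u\theta(u)$ into $u^{-1}a^{-1}u$ yields, for all $a,u\in H$, the identity
\[
\langle a,u\rangle:=\bigl(u^{-1}a\,u\bigr)^{-1}\,\theta(u)^{-1}a\,\theta(u)=\theta(u)^{-1}\,[c_u,a]\,\theta(u),
\]
whose left-hand side is a Peiffer generator (it measures the failure of $a^{\theta(u)}=u^{-1}au$) and whose right-hand side is a conjugate, by $\theta(u)\in s_0^0(\Phi(X))$, of a commutator $[c_u,a]\in[\mathrm{Ker}\,d_1,\mathrm{Ker}\,d_0]$. As both kernels are normal in $F_1$, their commutator subgroup is invariant under conjugation by $s_0^0(\Phi(X))$; hence every Peiffer generator lies in $[\mathrm{Ker}\,d_0,\mathrm{Ker}\,d_1]$, and since $P$ is generated by these, $P\subseteq[\mathrm{Ker}\,d_0,\mathrm{Ker}\,d_1]$.

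For the reverse inclusion I would first upgrade the invariance of $P$: it is normal in $H$ and $\Phi(X)$-invariant (as already noted in the text), and $F_1=H\rtimes s_0^0(\Phi(X))$, so $P$ is normalized by both factors and therefore $P\trianglelefteq F_1$. Now $[\mathrm{Ker}\,d_0,\mathrm{Ker}\,d_1]$ is generated, as a group, by the $\langle\mathrm{Ker}\,d_0,\mathrm{Ker}\,d_1\rangle$-conjugates of commutators $[\alpha,\beta]$ of free generators $\alpha$ of $\mathrm{Ker}\,d_0$ and $\beta$ of $\mathrm{Ker}\,d_1$; this is the standard consequence of the expansions $[a_1a_2,b]=[a_1,b]^{a_2}[a_2,b]$ and $[a,b_1b_2]=[a,b_2]\,[a,b_1]^{b_2}$. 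Each such $\beta$ is $c_u$ for a generator $u$ of $\mathrm{Ker}\,d_0$, so reading the displayed identity backwards gives $[\alpha,\beta]=\theta(u)\,\langle\alpha,u\rangle\,\theta(u)^{-1}\in P$, and every $F_1$-conjugate of it remains in $P$ by normality. Hence $[\mathrm{Ker}\,d_0,\mathrm{Ker}\,d_1]\subseteq P$, and equality follows, which finishes the lemma.

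I expect the reverse inclusion, rather than the forward one, to be the main obstacle. The Peiffer subgroup is generated by the elements $\langle a,u\rangle$ with $a,u$ ranging over \emph{all} of $H$, whereas $[\mathrm{Ker}\,d_0,\mathrm{Ker}\,d_1]$ is most naturally described through commutators of \emph{basis} elements, and the shift $u\mapsto c_u$ is a bijection of generating sets but \emph{not} a homomorphism. Reconciling the two generating descriptions is precisely what forces the normality of $P$ in $F_1$ together with the commutator calculus above; keeping the conjugation bookkeeping and the sign and ordering conventions in $\langle a,u\rangle$ mutually consistent is where the genuine care is needed.
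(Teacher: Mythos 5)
Your proof is correct, and its first half coincides with the paper's: the paper also reduces everything to the identity $P=[\mathrm{Ker}\,d_0,\mathrm{Ker}\,d_1]$ and establishes the forward inclusion by the very same computation, setting $b=s_0d_1(u)$, $\widetilde a^{-1}=u^{-1}au$ and rewriting the Peiffer commutator as $\widetilde a^{-1}b^{-1}u\widetilde a u^{-1}b$, which is exactly a commutator of $\widetilde a\in\mathrm{Ker}\,d_0$ with the shift $u^{-1}b\in\mathrm{Ker}\,d_1$ -- on the nose, in a suitable commutator convention, so the outer conjugation by $\theta(u)$ that you carry along (and then absorb via normality) is avoidable. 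Where you genuinely diverge is the reverse inclusion. You work with the free basis of $\mathrm{Ker}\,d_1$, reduce a general commutator to conjugates of basis commutators via the expansions $[a_1a_2,b]=[a_1,b]^{a_2}[a_2,b]$, note that $u\mapsto c_u$ is a bijection of bases, and then need $P\trianglelefteq F_1$ (which you correctly upgrade from normality in $H$ plus $\Phi(X)$-invariance using $F_1=H\rtimes s_0^0\Phi(X)$). The paper instead proves the stronger structural fact that the shift is \emph{surjective}: every $x\in\mathrm{Ker}\,d_1$ equals $y\,s_0d_1(y)^{-1}$ for some $y\in\mathrm{Ker}\,d_0$, obtained in one line from the Moore decomposition $G_1\cong NG_1\leftthreetimes s_0G_0$ by writing $x=y\cdot s_0(y_0)$ and applying $d_1$ to get $y_0=d_1(y)^{-1}$. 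With surjectivity, every commutator $[x,a]$ with $x\in\mathrm{Ker}\,d_1$, $a\in\mathrm{Ker}\,d_0$ is literally a Peiffer commutator (as $a\mapsto u^{-1}a^{-1}u$ is a bijection of $\mathrm{Ker}\,d_0$), so no basis reduction, no commutator calculus, and no normality of $P$ in $F_1$ are required. Your route is more elementary and makes the matching of generating sets explicit, but it pays for this with extra bookkeeping and quietly relies on the free basis of $\mathrm{Ker}\,d_1$ (justified, e.g., by the automorphism $x\mapsto x$, $y\mapsto y\,s_0(r_y)^{-1}$ of $\Phi(X\cup Y)$ intertwining $d_1$ with $d_0$, a point worth stating); the paper's semidirect-product argument is shorter and, more importantly, transfers verbatim to the prounipotent setting of Section \ref{s3.3}, where manipulations with free bases inside Zariski closures would be delicate -- your anticipated ``main obstacle'' is real, and the Moore decomposition is precisely the device the paper uses to dissolve it.
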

\begin{proof}
By construction $H=Ker\; d_0, \theta=\overline{d_1},$ so let $\langle u,a\rangle = u^{-1}au(a^{d_1(u)})^{-1}=u^{-1}au(s_0 d_1(u)^{-1} a s_0 d_1(u))^{-1}$, where $u, a\in Ker\;d_0$,  be a Peiffer commutator and denote $b=s_0 d_1(u)$, $\widetilde{a}^{-1}=u^{-1}au$.
Then $\langle u,a\rangle =\widetilde{a}^{-1}b^{-1}u\widetilde{a}u^{-1}b=[u^{-1}b,\widetilde{a}]$ and therefore Peiffer subgroup is generated by elements of the form $[u^{-1} s_0 d_1(u), a].$

Any element $x \in Ker\; d_1$ can be written as $s_1 d_1(y) \cdot y^{-1},$ where $y \in Ker\; d_0$.
Indeed, $G_1 \cong NG_1 \leftthreetimes s_0 G_0,$ so $x=y \cdot s_0(y_0),$ with $y_0 \in G_0$, $y \in NG_1$.
Since $x \in Ker\; d_1$, we have that
$1 = d_1(x) = d_1(y) \cdot d_1 s_0(y_0)$ and hence $y_0 = d_1(y^{-1})$, so $x=y \cdot s_0 d_1(y^{-1})$ and, therefore,
 $P=[Ker\; d_1, Ker\; d_0]$.
\end{proof}

By Brown-Loday Lemma \cite[5.7]{BL}, \cite{MP} in any simplicial group degenerate in dimension two, in particular \eqref{2}, the subgroups $[Ker\; d_0, Ker\; d_1]$ and $Im\;\overline{d_2}$ coincide,
where $\overline{d_2}:N(F^{(1)}_{\bullet})_2\rightarrow N(F^{(1)}_{\bullet})_1$ - the restriction of $d^2_2$ to $N(F^{(1)}_{\bullet})_2$.

Let $C=Ker\; d_0/Im\;\overline{d_2}$, it is easy to check that $(C, \Phi(X),\overline{d_1})$ is a crossed module (see \cite{Por} for details) and we recollect all the above in the following
\begin{proposition} \label{l3}
Suppose we are given a presentation \eqref{2}, then there is an isomorphism of crossed modules
$$\left(C, \Phi(X),\overline{d_1}\right)\cong\left(\frac{Ker\; d_0}{[Ker\; d_0, Ker\; d_1]},\Phi(X),\overline{d_1}\right),$$
arising from the coincidence of the subgroups $[Ker\; d_0, Ker\; d_1]$ and $Im\;\overline{d_2}$ in $Ker\; d_0$ and hence the isomorphism
$$\pi_1(F^{(1)}_{\bullet})\cong \frac{Ker\; d_0\cap Ker\;d_1}{[Ker\;d_0,Ker\; d_1]}\cong Ker\;\overline{d_1},$$
which allow us to identify $\pi_2(K(X\mid R))$ with $\pi_1(F^{(1)}_{\bullet})$.
\end{proposition}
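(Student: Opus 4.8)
The plan is to assemble the proposition from the two coincidence facts established just above: Lemma \ref{l3.3}, which identifies the Peiffer group $P$ with $[Ker\; d_0, Ker\; d_1]$ inside $Ker\; d_0$, and the Brown-Loday Lemma, which identifies $Im\;\overline{d_2}$ with $[Ker\; d_0, Ker\; d_1]$ for a simplicial group degenerate in dimension two such as $F^{(1)}_{\bullet}$. The statement is a recollection, so the work is essentially one of matching kernels, images, and crossed-module structures.

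First I would establish the crossed-module isomorphism of the first display. Since both crossed modules share the same base group $\Phi(X)$ and the same boundary $\overline{d_1}$ induced from $d_1$, differing only in their top group, it suffices to observe that these top groups are literally equal: by the Brown-Loday Lemma $Im\;\overline{d_2} = [Ker\; d_0, Ker\; d_1]$, so that $C = Ker\; d_0/Im\;\overline{d_2} = Ker\; d_0/[Ker\; d_0, Ker\; d_1]$. One then checks that the induced $\Phi(X)$-action and the induced boundary agree under the two descriptions, so that the identity on $Ker\; d_0$ descends to a crossed-module isomorphism.

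Next I would compute $\pi_1(F^{(1)}_{\bullet})$ directly from the definition of the homotopy of a simplicial group as the homology of its Moore complex. At level one this homology is $Ker(\overline{d_1})/Im(\overline{d_2})$ taken inside $N(F^{(1)}_{\bullet})_1 = Ker\; d_0$; since the kernel of $\overline{d_1}$ computed inside $Ker\; d_0$ is exactly $Ker\; d_0 \cap Ker\; d_1$, and since $Im\;\overline{d_2} = [Ker\; d_0, Ker\; d_1]$ again by Brown-Loday, I obtain
$$\pi_1(F^{(1)}_{\bullet}) = \frac{Ker\; d_0 \cap Ker\; d_1}{[Ker\; d_0, Ker\; d_1]}.$$
For the remaining isomorphism I would analyze the boundary $\overline{d_1}\colon C \to \Phi(X)$ of the crossed module $C = Ker\; d_0/[Ker\; d_0, Ker\; d_1]$. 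A short verification that $d_1$ annihilates every basic commutator $[x,y]$ with $x \in Ker\; d_0$ and $y \in Ker\; d_1$ (indeed $d_1[x,y] = [d_1 x,\, 1] = 1$) shows that $\overline{d_1}$ is well defined and that $Ker\;\overline{d_1} = (Ker\; d_0 \cap Ker\; d_1)/[Ker\; d_0, Ker\; d_1]$, which is precisely the expression for $\pi_1(F^{(1)}_{\bullet})$ just obtained.

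Finally I would transport this back to the topological side. By the commutative diagram preceding Lemma \ref{l3.3} we have $\pi_2(K(X\mid R)) \cong Ker\;\widetilde{\theta}$, and Lemma \ref{l3.3} supplies a crossed-module isomorphism $(H/P, \Phi(X), \widetilde{\theta}) \cong (Ker\; d_0/[Ker\; d_0, Ker\; d_1], \Phi(X), \overline{d_1})$, under which $Ker\;\widetilde{\theta} \cong Ker\;\overline{d_1}$. Chaining these identifications with the preceding paragraph yields $\pi_2(K(X\mid R)) \cong \pi_1(F^{(1)}_{\bullet})$, as claimed. The argument is formal once the two coincidence lemmas are in hand; the only point I expect to demand care is bookkeeping the level-one Moore homology \emph{inside} $N_1 = Ker\; d_0$ rather than in the ambient group, so that its numerator is $Ker\; d_0 \cap Ker\; d_1$ and not merely $Ker\; d_1$, and checking that the $\Phi(X)$-equivariance of every map is preserved at each descent.
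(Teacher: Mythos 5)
Your proposal is correct and follows essentially the same route as the paper, which itself only recollects the ingredients: the Brown--Loday coincidence $Im\;\overline{d_2}=[Ker\;d_0,Ker\;d_1]$ identifies the two crossed modules (the crossed-module axioms for $C$ being delegated to Porter), the level-one Moore homology computation inside $N_1=Ker\;d_0$ gives $\pi_1(F^{(1)}_{\bullet})\cong(Ker\;d_0\cap Ker\;d_1)/[Ker\;d_0,Ker\;d_1]\cong Ker\;\overline{d_1}$, and Lemma \ref{l3.3} together with the chain-level diagram identifying $\pi_2(K(X\mid R))$ with $Ker\;\widetilde{\theta}$ completes the chain. Your explicit checks (that $d_1$ kills the commutators, so $P\subseteq Ker\;d_0\cap Ker\;d_1$ and $\overline{d_1}$ descends, and the careful bookkeeping of the Moore numerator as $Ker\;d_0\cap Ker\;d_1$) merely make precise what the paper leaves implicit.
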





\subsection{Simplicial presentation as a model of $\Omega K(X|R)$}\label{r1.1}
The simplicial presentation \eqref{2} coincide with the free simplicial group $B_{K(X|R)}$ constructed by D.Kan \cite[Definition 5.2]{Kan}. $B_{K(X|R)}$ is loop homotopy equivalent by \cite[Theorem 5.5]{Kan} to $GS_1K(X|R)$ - the ``Kan loop group'' of
the first Eilenberg subcomplex $S_1K(X|R)$ of the total singular complex of $K(X|R)$ \cite[Definition 8.3]{May}.
Recall that the ``Kan loop group functor'' $G$ assigns to each reduced simplicial set $X$ a free simplicial group $GX$ subject to the ``principal twisted cartesian product'' \cite[Theorem 26.6]{May}
$GX\rightarrow GX \times_{\tau} X \xrightarrow{p} X,$
where $|GX \times_{\tau} X|$ is a contractible space, $p$ is a Kan fibration and therefore $GX$ is a simplicial ``loop space'' for $X$.

By \cite{Qui4} the geometric realization $|p|$ of the Kan fibration $p$ is a Serre fibration. Then by \cite[3.3, Chapter 3]{GZ} $|GX|$ is the fiber of $|p|$
$$|GX|\rightarrow |GX \times_{\tau} X| \xrightarrow{|p|} |X|$$
and, therefore, (as in \cite[Proposition 4.66]{Hut}) there is a weak homotopy equivalence $\beta:|GS_1K(X|R)|\simeq \Omega K(X|R)$.
Since $K(X|R)$ is a $CW$-complex, by \cite[Corollary 2]{Milnor} $\Omega K(X|R)$ is homotopy equivalent to a $CW$-complex.
And as geometric realization is a CW-complex \cite[Theorem 14.1]{May}, by Whitehead's theorem, $\beta$ is a homotopy equivalence. Finally, we have a homotopy equivalence $|B_{K(X|R)}|\simeq \Omega K(X|R)$ and therefore $B_{K(X|R)}$ (i.e. a simplicial presentation in the 2-dimensional case) is the combinatorial model of based loops on $K(X|R)$.

\section{Representations and cohomology of affine group schemes}\label{s1.1}
The most convenient presentation of affine group schemes for our purposes can be found in books \cite{Wat}, \cite{Jan}, the necessary material on the general theory of Hopf algebras in the required volume is available in \cite[Chapter 1]{Mont}, \cite{Car}. We have specially included some useful facts about regular representations and induced modules as they arise in the study of prounipotent presentations. A modern exposition of cohomology theory is contained in \cite{Jan}, but at the same time, to construct the spectral sequence, we follow classical approach \cite{HS} to obtain an explicit double complex.

The category of topological modules is defined in Section \ref{s4} by continuous duality. This generality automatically leads to the fact that the introduced category is abelian, since the axioms of an abelian category are self-dual \cite[1.4]{Groth}.

\subsection{Affine group schemes and their representations}\label{s1.01}
By an affine group scheme over a field $k$ one calls a representable functor $G$
from the category $Alg_k$ of commutative $k$-algebras with unit to the category of groups.
If $G$ is representable by the Hopf algebra $\mathcal{O}(G)$, then as a functor $G$ is given, for any commutative
$k$-algebra $A$, by the formula
$$G(A) = Hom_{Alg_k}(\mathcal{O}(G),A).$$
We assume that homomorphisms $Hom_{Alg_k}$ send the unit of the algebra $\mathcal{O}(G)$
to the unit of a $k$-algebra $A$. The Hopf algebra $\mathcal{O}(G)$, representing the functor $G$,
is usually called the \emph{algebra of regular functions} of $G$.
We remind that the composition law in $G(A)$ is given for $g_1, g_2\in G(A)$, $x\in \mathcal{O}(G)$ by the formula $(g_1\cdot g_2)(x)=m_A(g_1\otimes g_2)\Delta(x)$, where $\Delta:\mathcal{O}(G)\rightarrow\mathcal{O}(G)\otimes \mathcal{O}(G)$ is the coalgebra map, $m_A$ is the multiplication in $A$, the inverse of $g\in G(A)$ is a $k$-algebra homomorphism, given as the composition $g\circ s$, where $s:\mathcal{O}(G)\rightarrow \mathcal{O}(G)$ is the antipod.

The Yoneda lemma implies the anti-equivalence of the categories of affine group schemes and commutative
Hopf algebras \cite[1.3]{Wat}.
Let us say that an affine group scheme $G$ is \emph{algebraic} if its Hopf algebra of regular functions $\mathcal{O}(G)$
is finitely generated as a commutative $k$-algebra.

We call the Zariski closure of the subset $S\subseteq G(k)$ the smallest affine subgroup $H$ in $G$ such that $S\subseteq H(k)$, it is $\varprojlim H_{\alpha}$, where $H_{\alpha}$ is the closure of the image of $S$ in $G_{\alpha}(k)$.

Let $M$ be a $k$-module then it defines a $k$-group functor $GL(M)$ by the formulae $GL(M)(A)=End_A(M\otimes A)^{\times}$ (invertible endomorphisms) called a general linear group of $M$.
Let $G$ be an affine $k$-group scheme, by a representation of $G$ on $M$ we understand a homomorphism of group functors $G\rightarrow GL(M).$
This is equivalent to say that each $G(A)$ acts from the left on $M(A)=M\otimes A$ through $A$-linear maps, so representation provides for each $A$ a group homomorphism $G(A)\rightarrow End_A(M\otimes A)^{\times}.$



For any $k$-algebra $A$ and $g\in G(A)$ we have a commutative diagram
$$\xymatrix{G(\mathcal{O}(G))\times (M\otimes \mathcal{O}(G)) \ar[rrr]^(0.6){f(\mathcal{O}(G))=\Delta_M\otimes id_{\mathcal{O}(G)}}\ar[d]_{G(g)\times(id_M\otimes g)} & & & M\otimes \mathcal{O}(G) \ar[d]^{id_M\otimes g} \\
G(A)\times (M\otimes A) \ar[rrr]^{f(A)} & & & M\otimes A}$$
Lets take a look at $id_{\mathcal{O}(G)}\times (m\otimes 1) \in G(\mathcal{O}(G))\times (M\otimes \mathcal{O}(G))$, commutativity of the diagram gives the following identity:
$$f(A)\circ G(g)(id_{\mathcal{O}(G)})(m\otimes 1)=(id_M\otimes g)\circ \Delta_M (m\otimes 1),$$
where $\Delta_M: M\otimes k\rightarrow M\otimes \mathcal{O}(G)$ (a comodule map \cite[3.2]{Wat}) is a restriction to $M$ of $f(id_{\mathcal{O}(G)}):M\otimes \mathcal{O}(G)\rightarrow M\otimes \mathcal{O}(G)$.

As $g=G(g)(id_{\mathcal{O}(G)}),$ for all $m\in M$ we have
\begin{equation}\label{eq4}
g(m\otimes 1)=(id_M\otimes g)\circ \Delta_M (m\otimes 1).
\end{equation}

\begin{example}[Left and right regular representations]
Let $\mathbb{A}^1_k$ be the affine line, as $\mathcal{O}(\mathbb{A}^1_k)\cong k[X]$ is the free polynomial algebra, by Yoneda's Lemma, there is an isomorhism of $k$-vector spaces $Mor(G,\mathbb{A}^1_k)\cong\mathcal{O}(G)$.

$\Delta_l$ - left and $\Delta_r$ - right regular actions of $G$ on $M=\mathcal{O}(G)$ are given for $\phi_A\in Mor(G(A),\mathbb{A}^1_k(A))$ and
$y_A ,x_A\in G(A)$ by the formulas respectively
$(y_A\cdot \phi_A)(x_A)=\phi_A(y_A^{-1} x_A)$ and $(y_A\cdot\phi_A)(x_A)=\phi_A(x_A y_A )$.
It turns out, that $\Delta_r$ comes from comodule structure given by the comultiplication in $\mathcal{O}(G)$, i.e. $\Delta_r=\Delta_G$. 

Indeed, $(y_A\cdot \phi_A)(x_A):=m_A (y_A\cdot(\phi_A\otimes id_A)(x_A\otimes id_A))$ equals by \eqref{eq4} to
$m_A((id_A\otimes y_A)\Delta_G (\phi_A)(x_A\otimes id_A))=m_A(\Delta_G (\phi_A)(x_A\otimes y_A))$, the last equality is by linearity of $\Delta_G$ at $y_A$, and finally $m_A(\Delta_G (\phi_A)(x_A\otimes y_A))=(x_A y_A)(\phi)$ as evaluation of the product in $G(A)$. But $(x_A y_A)(\phi)=\phi_A(x_A y_A)$ as $\phi$ is the natural transformation i.e.
$\phi_A\circ G(z_A)(id_{\mathcal{O}(G)})=\phi_A(z_A)$ equals to $z_A(\phi(id_{\mathcal{O}(G)}))=z_A(\phi)$ for $z_A\in G(A)$.
A similar statement is proven in \cite[Lemma 1.6.4, Ex. 1.6.5]{Mont}.

$\mathcal{O}(G)$-comodule structure which corresponds to the left regular representation is given by $\Delta_l=t\circ(s\otimes id_{\mathcal{O}(G)})\circ \Delta_G$, where $s$ is antipod and $t(a\otimes b)=b\otimes a$.
\end{example}
There is a natural notion of a $G$-module homomorphism and as it is perfectly explained in \cite[3.2]{Wat} the corresponding category $Rep(G)$ of left $G$-representations is equivalent to the category of right $\mathcal{O}(G)$-comodules.

Let $H$ is a closed subgroup of $G$, then $\mathcal{O}(H)=\mathcal{O}(G)/I_H,$
where $I_H$ is the Hopf ideal \cite{Wat} defining the subgroup $H$, and let $M$ be a $G$-module, whence we obtain the $k$-linear map
$\mu: M\rightarrow M\otimes\mathcal{O}(G)\rightarrow M\otimes\mathcal{O}(H)$, which defines left $H$-module structure
on $M$ (that is the \textbf{restriction functor} $M\downarrow^G_H$).

To construct injective hulls, we need the concept of an induced module.
So, let $H$ be a closed subgroup of the affine group scheme $G$. For each $H$-module $M$ the \textbf{induced module} $M\uparrow^G_H$ is defined by \cite[3.3]{Jan} as follows

$$M\uparrow^G_H=\{ f\in Mor(G,M_a)\mid f(gh)=h^{-1}f(g)\}$$
for all $g\in G(A)$ and $h\in H(A), A\in Alg_k\}, $
with left regular action of $G$.

\begin{proposition}\cite[I,3.4]{Jan} \label{p4}
Let $H$ be a closed subgroup of an affine group scheme $G$ and $M$ be an $H$-module.

a) $\varepsilon_M:M\uparrow^G_H\rightarrow M$ is a homomorphism of $H$-modules

b) For each $G-$module $N$ the map $\varphi\mapsto \varepsilon_M\circ \varphi $ defines an isomorphism
$$Hom_G(N,M\uparrow^G_H)\cong Hom_H(N\downarrow^G_H,M).$$
\end{proposition}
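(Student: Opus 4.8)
The plan is to prove part a) by a one-line equivariance check and part b) by writing down an explicit two-sided inverse to the map $\varphi\mapsto\varepsilon_M\circ\varphi$; everything can be done at the level of $A$-points, which is the language already fixed in Section~\ref{s1.01}. Throughout I would read $\varepsilon_M$ as evaluation at the identity, i.e. $\varepsilon_M(f)=f(e)$ (equivalently $(\mathrm{id}_M\otimes\varepsilon)$ applied to $f\in M\otimes\mathcal{O}(G)$, where $\varepsilon$ is the counit), and I would use that the $H$-action on $M\uparrow^G_H$ is the left regular $G$-action of Section~\ref{s1.01} restricted to $H$, so that $(h\cdot f)(x)=f(h^{-1}x)$.

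For a), fix $A\in Alg_k$, $h\in H(A)$ and $f\in (M\uparrow^G_H)(A)$. Then $\varepsilon_M(h\cdot f)=(h\cdot f)(e)=f(h^{-1})=f(e\,h^{-1})$, and applying the defining relation $f(g h')=h'^{-1}f(g)$ of the induced module with $g=e$ and $h'=h^{-1}$ gives $f(e\,h^{-1})=h\cdot f(e)=h\cdot\varepsilon_M(f)$. Hence $\varepsilon_M$ is $H$-equivariant, which is exactly a).

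For b), the inverse should send $\psi\in Hom_H(N\downarrow^G_H,M)$ to the map $\widetilde\psi\colon N\to M\uparrow^G_H$ defined on $A$-points by $\widetilde\psi(n)(g)=\psi(g^{-1}\cdot n)$ for $g\in G(A)$ and $n\in N\otimes A$. I would check in turn: (i) $\widetilde\psi(n)$ lies in $M\uparrow^G_H$, since $\widetilde\psi(n)(gh)=\psi(h^{-1}g^{-1}n)=h^{-1}\psi(g^{-1}n)=h^{-1}\widetilde\psi(n)(g)$, where the middle equality is precisely the $H$-equivariance of $\psi$ applied to $g^{-1}n$; (ii) $\widetilde\psi$ is a $G$-homomorphism, since both $\widetilde\psi(g'\cdot n)(g)$ and $(g'\cdot\widetilde\psi(n))(g)$ reduce to $\psi(g^{-1}g'n)$ by the left regular action; and (iii) the two assignments are mutually inverse, because $\varepsilon_M(\widetilde\psi(n))=\widetilde\psi(n)(e)=\psi(n)$ yields $\varepsilon_M\circ\widetilde\psi=\psi$, while for $\varphi\in Hom_G(N,M\uparrow^G_H)$ the $G$-equivariance of $\varphi$ gives $\widetilde{(\varepsilon_M\circ\varphi)}(n)(g)=\varphi(g^{-1}n)(e)=(g^{-1}\cdot\varphi(n))(e)=\varphi(n)(g)$, so the round trip returns $\varphi$.

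The routine but essential bookkeeping I expect to be the main obstacle is verifying that $\widetilde\psi(n)$ really defines an element of $Mor(G,M_a)$ natural in $A$, and that $\widetilde\psi$ itself is a morphism of $k$-group functors rather than merely a set-map on each $G(A)$; concretely one must confirm that the pointwise rule $g\mapsto\psi(g^{-1}n)$ is compatible with base change $A\to A'$, which follows from the naturality of the $G$-actions on $N$ and $M$ together with functoriality of $M_a$. Once these compatibilities are in place, steps (i)--(iii) are formal: the identity $\varepsilon_M\circ\widetilde\psi=\psi$ shows that $\varphi\mapsto\varepsilon_M\circ\varphi$ is surjective and (iii) shows it is injective, giving the claimed natural isomorphism $Hom_G(N,M\uparrow^G_H)\cong Hom_H(N\downarrow^G_H,M)$.
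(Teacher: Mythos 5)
Your proof is correct: part a) is the standard equivariance check for evaluation at the identity, and the explicit inverse $\widetilde\psi(n)(g)=\psi(g^{-1}n)$ in part b) is exactly the classical Frobenius-reciprocity argument; note that the paper itself gives no proof, since Proposition \ref{p4} is imported verbatim from \cite[I, 3.4]{Jan}, and your argument coincides with the one found there. The one bookkeeping worry you flag --- that the pointwise rule $g\mapsto\psi(g^{-1}n)$ defines a genuine element of $Mor(G,M_a)$ natural in $A$ --- evaporates once you observe that your formula has the closed comodule form $\widetilde\psi=(\psi\otimes s)\circ\Delta_N$, with values in $M\otimes\mathcal{O}(G)$, where $s$ is the antipode and $\Delta_N$ the comodule map; this is manifestly $k$-linear and compatible with base change, after which your checks (i)--(iii) on $A$-points suffice by Yoneda.
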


\begin{proposition}\label{p5} ~\cite[I,~3.6]{Jan} Let $H$ be a closed subgroup of an affine group scheme
$G$ and $M$ be an $H$-module. If $N$ is a $G$--module, then there is a canonical isomorphism of $G$-modules
$$(M\otimes N\downarrow^G_H)\uparrow^G_H\cong M\uparrow^G_H\otimes N.$$
\end{proposition}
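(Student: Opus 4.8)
The plan is to exhibit an explicit $G$-module isomorphism together with its inverse, working throughout with the functor-of-points description of the induced module from Section~\ref{s1.01}; the two maps differ only by a twist coming from the $G$-action on $N$, and this twist is exactly what converts the diagonal $H$-equivariance on $M\otimes N\downarrow^G_H$ into the $M$-only equivariance that characterizes $M\uparrow^G_H\otimes N$.

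First I would define
$$\Phi: M\uparrow^G_H\otimes N\longrightarrow (M\otimes N\downarrow^G_H)\uparrow^G_H$$
by sending $f\otimes n$ to the morphism whose value at $g\in G(A)$ is $f(g)\otimes (g^{-1}n)\in (M\otimes N)\otimes A$, where $g^{-1}n\in N\otimes A$ is computed through the representation of $G(A)$ on $N\otimes A$ (made precise as a morphism of schemes via the comodule map $\Delta_N$ and the antipode, cf.~\eqref{eq4}). The point of the twist is the equivariance check: for $h\in H(A)$,
$$\Phi(f\otimes n)(gh)=f(gh)\otimes (gh)^{-1}n=\bigl(h^{-1}f(g)\bigr)\otimes\bigl(h^{-1}g^{-1}n\bigr)=h^{-1}\bigl(f(g)\otimes g^{-1}n\bigr),$$
so $\Phi(f\otimes n)$ indeed satisfies the defining relation of $(M\otimes N\downarrow^G_H)\uparrow^G_H$ for the diagonal $H$-action. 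I would then verify that $\Phi$ is $k$-linear, natural in $A$, and $G$-equivariant for the left regular action on both sides, all routine from the definitions.

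Next I would write down the candidate inverse
$$\Psi:(M\otimes N\downarrow^G_H)\uparrow^G_H\longrightarrow M\uparrow^G_H\otimes N,\qquad \Psi(F)(g)=(id_M\otimes g)\,F(g),$$
that is, the same twist run backwards, untwisting the $N$-slot by applying $g$ rather than $g^{-1}$. A direct computation then shows the two twists cancel: $\Psi\Phi(f\otimes n)$ is the morphism $g\mapsto f(g)\otimes n$ and $\Phi\Psi(F)=F$, so $\Phi$ and $\Psi$ are mutually inverse. As a cross-check one can instead derive $\Phi$ abstractly from Frobenius reciprocity, Proposition~\ref{p4}, together with Yoneda.

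The step I expect to be the genuine obstacle is proving that $\Psi$ really takes values in $M\uparrow^G_H\otimes N$ and not merely in the larger space of morphisms $G\to (M\otimes N)_a$ that are equivariant in the $M$-variable alone. Equivalently, one must show that induction commutes with tensoring by the fixed space $N$, i.e. $(M\otimes N^{\mathrm{triv}})\uparrow^G_H\cong M\uparrow^G_H\otimes N$, where $N^{\mathrm{triv}}$ carries the trivial $H$-action. For finite-dimensional $N$ this is immediate, since $M\uparrow^G_H\otimes N\cong (M\uparrow^G_H)^{\oplus\dim N}$ and induction is additive; for the general case I would reduce to it by writing $N$ as the directed union of its finite-dimensional $G$-subcomodules (every comodule has this property) and checking that both sides commute with the resulting filtered colimit. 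This reduction, rather than the twist computation, is where the care is needed.
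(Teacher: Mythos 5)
Your proof is correct, and since the paper gives no proof of this proposition at all — it is quoted verbatim from \cite[I,~3.6]{Jan} — your argument is essentially the standard one found there: your twist $f\otimes n\mapsto\bigl(g\mapsto f(g)\otimes g^{-1}n\bigr)$ and its inverse are exactly the morphism-level form of the untwisting isomorphism \eqref{e2} that the paper records as a consequence of this very proposition, and your final reduction is sound, since every comodule is indeed the directed union of its finite-dimensional subcomodules (\cite[3.3]{Wat}, as used in Lemma~\ref{l2}) and induction, being the $H$-fixed-point kernel \eqref{eq2} applied to $-\otimes\mathcal{O}(G)$, commutes with filtered colimits because such colimits are exact over a field. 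One minor streamlining: the colimit detour is avoidable altogether, since $-\otimes_k N$ is exact and hence $(V\otimes N_{tr})^H\cong V^H\otimes N$ follows directly from the kernel description \eqref{eq2}.
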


Following ~\cite[I,~3.7]{Jan}, let's discuss some useful implications from the propositions above.
Suppose that $H=1$, then for all $k$-modules $M$
$M\uparrow^G_1=M\otimes \mathcal{O}(G), $ where $M$ is considered as a trivial $G$-module
and in particular
$k\uparrow^G_1=\mathcal{O}(G).$

Combining the latter identity with Proposition \ref{p4} (b) we obtain for each $G$-module $M$ an isomorphism of $k$-linear spaces (it is actually an isomorphism of topological modules, see Section \ref{s4})
\begin{equation}\label{eq10}
Hom_G(M,\mathcal{O}(G))\cong M^*.
\end{equation}
If we put $M=k_a$ in the tensor identity, then for each $G$-module $N$ there is a remarkable isomorphism
\begin{equation}\label{e2}
N\otimes \mathcal{O}(G)\cong N\uparrow^G_1=N_{tr}\otimes \mathcal{O}(G),
\end{equation}
given by the formula
$x\otimes f\mapsto (1\otimes f)\cdot(id_N\otimes s)\circ\Delta_N(x),$ where $N_{tr}$
denotes the $k$-module $N$ with the trivial action of $G$ and $s$ is the antipode in $\mathcal{O}(G)$.

We shall need $M^G$ - the submodule of \textbf{fixed points} of a $G$-module $M$
$$M^G=\{m\in M\mid g(m\otimes1)=m\otimes1, \mbox{for all } g\in G(A),\mbox{and all }A\in Alg_k\}.$$
Let $g=id_{\mathcal{O}(G)}\in G(\mathcal{O}(G))$ in \eqref{eq4}, then $M^G=\{m\in M\mid \Delta_M(m)=m\otimes1\}$ and therefore
\begin{equation}
 M^G=Ker\;(\Delta_M-id_M\otimes 1). \label{eq2}
\end{equation}

\subsection{Cohomology of affine group schemes}\label{s1.02}

It follows from Proposition \ref{p4} that the induction functor $M\uparrow^G_H$ is a right adjoint to the restriction functor $M\downarrow^G_H$ and, therefore, $M_{tr}\uparrow^G_1\cong M_{tr}\otimes \mathcal{O}(G)$ is an injective $G$-module, since $M_{tr}$ is obviously injective as a module over the trivial group.
It follows from the unit diagram $id_M=(id_M\otimes\varepsilon_M)\circ\Delta_M$ \cite[3.2]{Wat} that the map
$M \xrightarrow{\Delta_M} M\otimes\mathcal{O}(G)$ is an inclusion and, therefore, composing $\Delta_M$ with the isomorphism \eqref{e2} we embed $M\hookrightarrow M_{tr}\otimes \mathcal{O}(G)$ into the injective module.

The category of $G$-modules is abelian ~\cite[I,~2.9]{Jan} and as we just saw
contains enough injective objects, so we can define the cohomology groups $H^n(G, M)$ of the affine group scheme
$G$ with coefficients from $G$-module $M$ as the $n$-th derived functors of the fixed points functor $M^G$.

$H^*(G,M)$ can be computed explicitly using the \textbf{Hochschild complex} which is the cohomology computation by means of standard resolutions \cite[6.2]{ZR}.
\begin{equation}\label{eq7}{
C^*(G,M)=M\otimes^n \mathcal{O}(G), \partial^n:C^n(G,M)\rightarrow C^{n+1}(G,M),}
\end{equation}
where $\partial^n=\sum^{n+1}_{i=0}(-1)^i\partial_i^n$ for  $n\in \mathbb{N}$ is defined by formulas
$$\partial_0^n(m\otimes f_1\otimes\ldots\otimes f_n)=\Delta_M(m)\otimes f_1\otimes\ldots\otimes f_n,$$
$$\partial_i^n(m\otimes f_1\otimes\ldots\otimes f_n)=m\otimes f_1\otimes\ldots\otimes f_{i-1}\otimes\Delta_G(f_i) \otimes f_{i+1}\otimes \ldots\otimes f_n,\mbox{ for $1\leq i \leq n$}, $$
$$\partial_{n+1}^n(m\otimes f_1\otimes\ldots\otimes f_n)=m\otimes f_1\otimes\ldots\otimes f_n\otimes 1.$$

We can identify $C^n(G,M)$ with the ``inhomogeneous bar complex'' \cite[I.2]{HS} $Mor(G^n,M_a),$ where $G^n$ is the direct product of $n$ copies of $G$.
Indeed, by Yoneda's lemma, $(M\otimes \mathcal{O}(G)^{n})\otimes A\cong Mor (G_A^n, (M\otimes A)_a)$ and, therefore, $\partial_i^n$ is expressed as

$$\partial_0^nf(g_1,g_2,\ldots,g_{n+1})=g_1f(g_2,\ldots,g_{n+1})$$
 $$\partial_i^nf(g_1,g_2,\ldots,g_{n+1})=f(g_1,g_2,\ldots,g_{i-1},g_{i}g_{i+1},\ldots,g_{n+1})\mbox{ for $1\leq i \leq n$},$$
 $$\partial_{n+1}^n=f(g_1,g_2,\ldots,g_{n}).$$

The Hochschild complex
arises from an exact sequence of $G$-modules
$$0\rightarrow k_a\rightarrow \mathcal{O}(G)\rightarrow \mathcal{O}(G)^{\otimes 2}\rightarrow\ldots,$$
which is just the injective resolution $0\rightarrow k_a\rightarrow C^n(G,\mathcal{O}(G))$ of $k_a$ treating $M=\mathcal{O}(G)$ as $G$-module via $\rho_r$ and $k_a$ as trivial $G$-module
\cite[4.15, (1)]{Jan}.

This sequence is in fact a sequence of homomorphisms of $G$-modules if we assume that $G$ acts on $\mathcal{O}(G)^{\otimes n}$ via $\rho_l$ on the first factor and trivially on all the other factors, it is for this action $\mathcal{O}(G)\rightarrow \mathcal{O}(G)\otimes \mathcal{O}(G), f\mapsto \Delta_G(f)- f\otimes1$ is $G$-equivariant map.
We tensor $0\rightarrow k_a\rightarrow C^n(G,\mathcal{O}(G))$ with $M$ on the left and use \eqref{e2} in order to obtain an injective resolution of $M$
\begin{equation}\label{eq5}
{0\rightarrow M\rightarrow M_{tr}\otimes \mathcal{O}(G)\rightarrow M_{tr}\otimes \mathcal{O}(G)^{\otimes 2}\rightarrow\ldots}
\end{equation}
and therefore $H(G,M)$ is the cohomology of the complex
$$0\rightarrow(M_{tr}\otimes \mathcal{O}(G))^G\rightarrow (M_{tr}\otimes \mathcal{O}(G)^{\otimes 2})^G\rightarrow\ldots$$
But as $G$ acts trivially on all but one factor, and as $\mathcal{O}(G)^G=k_a$, we see that

$(M_{tr}\otimes \mathcal{O}(G)^{\otimes n+1})^G\cong M_{tr}\otimes \mathcal{O}(G)^{\otimes n}\cong C^n(G,M)$.
\begin{remark}
Let $G\cong\varprojlim G_{\gamma}$ be a pro-$p$-group (where $G_{\gamma}$ are finite $p$-groups), by Example \ref{e6} below, $G$ may be thought as an affine group scheme with the algebra of regular functions
$\mathcal{O}(G)= \mathbb{F}_pG^{\vee}$.
Now, the right $\mathcal{O}(G)$-comodule $M$ over the affine group scheme represented by $\mathcal{O}(G)$ is a left discrete $G$-module \cite[2.1]{Se4} of exponent equal to $p$. By Lemma \ref{l2} (Section \ref{s3.4}) $M\cong\varinjlim M_{\gamma}$, where
$ M_{\gamma}$ are $G_{\gamma}$-submodules and $G_{\gamma}$ are taken from the decomposition of $G$.
Conversely, if $M$ is a left discrete $G$-module of exponent equal to $p$, then it is a $G$-module in the schematic sense.

As $(M\otimes \mathcal{O}(G)^{\otimes^n})^G\cong \varinjlim (M_{\gamma}\otimes \mathcal{O}(G_{\gamma})^{\otimes^n})^{G_{\gamma}}$ and, since
homology commutes with direct limits, we have
$H^n(G,M)\cong \varinjlim H^n(G_{\gamma}, M_{\gamma}) $, and the schematic cohomology of pro-$p$-groups with $G$-module coefficients coincide with the cohomology groups of a pro-$p$-group with coefficients in discrete $G$-modules of exponent equal to $p$ \cite[6.2]{ZR}, \cite[2.2]{Se4}.
\end{remark}

\subsection{The Lyndon-Hochschild-Serre spectral sequence}\label{s2.3}
\begin{definition}\label{d3.1}
A double complex $E$ is a collection $E^{p,q}_0, p,q\geq 0$ of abelian groups and maps $d_0^{p,q}:E^{p,q}_0\rightarrow E^{p,q+1}_0, d_1^{p,q}:E^{p,q}_0\rightarrow E^{p+1,q}_0$ arranged in the diagram such that the following conditions are satisfied:
\begin{description}
\item[(i)]{each row satisfied $d_1\circ d_1=0;$}
\item[(ii)]{each column satisfied $d_0\circ d_0=0;$}
\item[(iii)]{$d_0\circ d_1+d_1\circ d_0=0.$}
\end{description}
\end{definition}
The total complex of a double complex $X^n=Tot(E)=\bigoplus_{i+j=n}E^{i,j}_0$ is given with the differential $d=d_0+d_1:X^n\rightarrow X^{n+1}$.

The double complex inherits the filtration $D_0^{p,q}=F^pX^{p+q}=\bigoplus_{i+j=p+q}^{i\geq p} E^{i,j}_0$,
each quotient
$F^pX^{p+q}/F^{p+1}X^{p+q}\cong E^{p,q}_0$ of the filtration
should be treated as a single column of $E$ with the differential $d_0:E_0^{p,q}\rightarrow E_0^{p,q+1},$ since $d_1$ maps into a lower layer. So we get the derived couple \cite{HSJ}
$$E_1^{p,q}=H(E^{p,q}_0,d_0),\mbox{   }D_1^{p,q}=H(E_0^{p,q}\oplus E_0^{p+1,q-1}\oplus..., d_0+d_1).$$

Let $x\in E_0^{p,q}$ and $d_0(x)=0,$ so $x$ represents a class $[x]\in E_1^{p,q},$ then $k_1$ in the derived couple is defined as a boundary homomorphism associated to the short exact sequence of chain complexes
$0\rightarrow F^{p+1}X\rightarrow F^pX\rightarrow F^pX/F^{p+1}X\rightarrow 0.$
Since $d_0(x)=0$ we can calculate
$k_1[x]=[d_1(x)]\in D_1^{p+1,q}$
i.e. the differential of the derived pair $j_1\circ k_1[x]=[d_1(x)]$ coincides with the horizontal differential $d_1$ of $E$ and we obtain the following

\begin{proposition}\label{t3.1}
For a given double complex $(E_0^{p,q}, d_0, d_1)$ there is a spectral sequence with
$$E_1^{p,q}=H^q(E_0^{p,q},d_0),\mbox{     }
E_2^{p,q}=H^p(H^q(E_0,d_0),d_1)$$
$$E_{\infty}^{p,q}=F^pH^{p+q}(Tot(E_0))/F^{p+1}H^{p+q}(Tot(E_0))$$
\end{proposition}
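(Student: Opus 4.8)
The plan is to construct a spectral sequence directly from the filtered total complex $(Tot(E_0), d = d_0 + d_1)$ by means of the associated exact couple, exactly following the classical machinery sketched just before the statement. First I would set up the bigraded exact couple $(D_1, E_1)$ arising from the short exact sequences of chain complexes $0 \to F^{p+1}X \to F^pX \to F^pX/F^{p+1}X \to 0$; the long exact cohomology sequence of each such pair yields the homomorphisms $i_1, j_1, k_1$ fitting into a triangle, with $E_1^{p,q} = H^q(E_0^{p,\bullet}, d_0)$ being the cohomology of a single column, since $d_1$ shifts filtration and hence vanishes on each graded quotient. This is precisely the identification $E_1^{p,q} = H^q(E_0^{p,q}, d_0)$ claimed in the proposition.

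Next I would iterate the derived-couple construction. Given an exact couple, one forms the derived couple $(D', E')$ with $E' = H(E, d)$ where the induced differential is $d = j \circ k$. The computation already recorded in the text shows that on $E_1$ the induced differential $d_1' = j_1 \circ k_1$ sends a class $[x]$ (with $d_0 x = 0$) to $[d_1 x]$, so it coincides with the horizontal differential $d_1$ of the double complex acting on vertical cohomology classes. Taking cohomology once more therefore produces $E_2^{p,q} = H^p(H^q(E_0, d_0), d_1)$, which is the second claimed formula. The general exact-couple formalism then guarantees a whole sequence of pages $E_r$ with differentials $d_r$ of bidegree $(r, 1-r)$.

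For the convergence statement, I would appeal to the fact that the filtration $F^pX^n$ is finite in each total degree $n$, since $F^pX^{p+q} = \bigoplus_{i+j=p+q}^{i\geq p} E_0^{i,j}$ is eventually zero once $p > p+q$, i.e. once $p$ exceeds the total degree. Boundedness of the filtration forces the spectral sequence to degenerate at a finite page in each fixed bidegree, so the stable terms $E_\infty^{p,q}$ exist and equal $E_r^{p,q}$ for $r$ large. The standard identification of the limit of an exact couple with the graded quotients of the filtered cohomology of the total complex then gives $E_\infty^{p,q} \cong F^pH^{p+q}(Tot(E_0))/F^{p+1}H^{p+q}(Tot(E_0))$, completing the last formula.

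The main obstacle, such as it is, will be verifying convergence cleanly and the precise identification of $E_\infty$ with the graded pieces of $H^*(Tot(E_0))$; everything else is the routine bookkeeping of the exact-couple method, much of which has already been carried out in the discussion preceding the statement. Since the three sign conditions (i)--(iii) of Definition \ref{d3.1} guarantee $d^2 = (d_0+d_1)^2 = 0$, the total complex is genuinely a cochain complex and the filtration is a filtration by subcomplexes, so no subtlety beyond the finiteness of the filtration is needed. I would therefore emphasize that the sign condition (iii) is exactly what makes $d_0 + d_1$ square to zero, and that the non-negativity $p, q \geq 0$ together with finite total degree furnishes the bounded filtration required for convergence.
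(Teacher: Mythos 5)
Your proposal is correct and follows essentially the same route as the paper: the paper's own argument is precisely the exact-couple construction laid out in the discussion preceding the statement (the column filtration $F^pX^{p+q}$, the identification $E_1^{p,q}=H^q(E_0^{p,\bullet},d_0)$, and the computation $k_1[x]=[d_1(x)]$ showing that the derived-couple differential is the horizontal $d_1$), which you reproduce faithfully. The only thing you add is an explicit convergence argument via boundedness of the first-quadrant filtration, which the paper leaves implicit; note in passing that your phrase ``once $p > p+q$'' should simply read ``once $p$ exceeds the total degree $n$,'' as your own clarifying clause already says.
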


\begin{remark}\label{r3.1}
It turns out that in a double complex $E_0^{p,q}$ the differentials $d_2^{p,q}$ may be described using the differentials $d_0$ and $d_1$ as follows. For each class $e\in E_2^{p,q},$ there are elements $x\in E_0^{p,q}$ and $y\in E_0^{p+1,q-1}$ such that:
\begin{description}
\item[(i)]{$d_0(x)=0$, $d_1(x)=-d_0(y)$}
\item[(ii)]{$d_2^{p,q}(e)=[d_1(y)]$, where by $[d_1(y)]$ we denote the class of $d_1(y)$ in $E_2^{p+2,q-1}$}
\end{description}
\end{remark}

\begin{proposition}\label{t3.2}
Let $G$ be an affine group scheme, $H\lhd G$ be a closed normal subgroup of $G$ and $M$ is $G$-module. Then there is a cohomological Lyndon-Hochschild-Serre spectral sequence
$E_2^{p,q}=H^p(G/H,H^q(H,M))\Rightarrow H^{p+q}(G,M)$.
\end{proposition}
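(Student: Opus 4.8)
The plan is to realize $H^{*}(G,M)$ as the total cohomology of an explicit double complex and then read off the $E_2$-page directly from Proposition \ref{t3.1}. First I would form the bicomplex
$$E_0^{p,q}=Mor\big((G/H)^p\times H^q,\,M_a\big)\cong C^p\big(G/H,\,C^q(H,M)\big),$$
identifying the inner term with $M\otimes\mathcal{O}(H)^{\otimes q}$ as in \eqref{eq7}. The vertical differential $d_0$ (raising $q$) is the Hochschild differential of the $H$-cochain complex $C^{\bullet}(H,M)$, applied with the $(G/H)$-variables held as parameters, and the horizontal differential $d_1$ (raising $p$) is the Hochschild differential of the $(G/H)$-cochain complex; a sign adjustment on one of them secures the anticommutativity $d_0 d_1+d_1 d_0=0$ demanded by Definition \ref{d3.1}(iii).

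The structural input that makes this a bicomplex of the correct shape is that $C^q(H,M)$ must be a $(G/H)$-module. Since $H\lhd G$ is closed and normal, $G$ acts on $\mathcal{O}(H)=\mathcal{O}(G)/I_H$ by conjugation, and together with the $G$-module structure on $M$ this turns $M\otimes\mathcal{O}(H)^{\otimes q}$ into a $G$-module. The key lemma — the schematic analogue of the fact that a group acts trivially on its own cohomology — is that the $H$-part of this action becomes trivial after passing to $H^q(H,M)$, so that $H^q(H,M)$ is a genuine $(G/H)$-module. I would verify this by the comodule computation of \eqref{eq4} applied to the conjugation coaction, using compatibility with the comultiplications $\Delta_G$ and $\Delta_H$.

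Next I would identify the cohomology of $Tot(E_0)$ with $H^{*}(G,M)$. For this I would exhibit $E_0$ as the associated graded of a decreasing filtration on the Hochschild complex $C^{\bullet}(G,M)$ of \eqref{eq7}: filter $Mor(G^{\bullet},M_a)$ by the subcomplexes of cochains depending on their leading arguments only through the projection $G\to G/H$. A set-theoretic section $s\colon G/H\to G$ of this projection identifies the associated graded $\mathrm{gr}^p\,C^{p+q}$ with $Mor((G/H)^p\times H^q,M_a)=E_0^{p,q}$ and carries the two pieces of the Hochschild differential onto $d_0$ and $d_1$; hence $Tot(E_0)$ is quasi-isomorphic to $C^{\bullet}(G,M)$ and computes $H^{*}(G,M)$. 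Applying Proposition \ref{t3.1} then yields $E_1^{p,q}=H^q(E_0^{p,\bullet},d_0)=C^p(G/H,H^q(H,M))$ — here I use that $Mor((G/H)^p,-)\cong -\otimes\mathcal{O}(G/H)^{\otimes p}$ is an exact functor over the field $k$, so $d_0$-cohomology passes to the coefficients — and, taking horizontal cohomology, $E_2^{p,q}=H^p(G/H,H^q(H,M))$, converging to the associated graded of $H^{p+q}(Tot(E_0))\cong H^{p+q}(G,M)$.

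The main obstacle I expect is twofold and lies entirely in the two structural identifications rather than in the spectral-sequence formalism: establishing the triviality of the inner $H$-action on $H^q(H,M)$ in the comodule language, and checking that the filtration of $C^{\bullet}(G,M)$ by $(G/H)$-image has associated graded precisely $E_0$ with the claimed differentials. Both rest on the compatibility of the conjugation coaction with $\Delta_G,\Delta_H$ and on a coherent choice of section of $G\to G/H$. Once these are in hand, the explicit formulas of Remark \ref{r3.1} furnish the higher differentials — in particular the transgression $d_2$ — in the concrete form that will be needed in Section \ref{s1.4}.
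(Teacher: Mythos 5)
Your spectral-sequence formalism (Proposition \ref{t3.1}) is fine, but both structural identifications on which your construction rests break down in the schematic setting, and they are precisely the points where the paper's proof takes a different, safer route. First, your bicomplex $C^p(G/H,C^q(H,M))$ is not well-defined: to write the horizontal $(G/H)$-Hochschild differential you need the coefficients $C^q(H,M)=M\otimes\mathcal{O}(H)^{\otimes q}$ to be $G/H$-modules, but the conjugation action of $G$ on $H$-cochains does not factor through $G/H$ at the cochain level. Your own ``key lemma'' only makes the $H$-action trivial after passing to $H^q(H,M)$, which is too late to assemble the double complex whose columns are supposed to produce that cohomology. The paper circumvents this entirely by taking as inner coefficients the $H$-fixed points $(M\otimes\mathcal{O}(G)^{\otimes(q+1)})^H$ of the injective resolution \eqref{eq5}: since $H\lhd G$, these are honest $G/H$-modules, so $K^{p,q}=(M\otimes\mathcal{O}(G)^{\otimes(q+1)})^H\otimes\mathcal{O}(G/H)^{\otimes p}$ is a genuine double complex, and exactness of $-\otimes\mathcal{O}(G/H)^{\otimes p}$ gives $E_1^{p,q}\cong H^q(H,M)\otimes\mathcal{O}(G/H)^{\otimes p}$ and hence $E_2^{p,q}=H^p(G/H,H^q(H,M))$ with no lemma about conjugation actions needed at all.

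Second, the ``set-theoretic section $s\colon G/H\to G$'' on which your abutment identification depends does not exist in this category. The proposition is stated for arbitrary affine group schemes; $G\to G/H$ is a faithfully flat $H$-torsor which is in general nontrivial, a section would have to be a morphism of schemes (an algebra splitting of $\mathcal{O}(G/H)=\mathcal{O}(G)^H\hookrightarrow\mathcal{O}(G)$), and pointwise choices on the various $G(A)$ do not assemble into natural transformations, so the identification $\mathrm{gr}^p\,C^{p+q}\cong Mor((G/H)^p\times H^q,M_a)$ has no meaning here. Even where a scheme-theoretic section exists it is not a homomorphism, so the Hochschild differential does not split as your $d_0+d_1$ on the associated graded; reconciling the induced $E_1$-differential with the $G/H$-Hochschild differential is exactly the delicate computation of the classical Hochschild--Serre filtration argument, which you elide. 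The paper needs no filtration of $C^{\bullet}(G,M)$ and no section: it runs the transposed spectral sequence of the same double complex $K^{p,q}$, observing via the tensor identity \eqref{e2} that $(M\otimes\mathcal{O}(G)^{\otimes(q+1)})^H\cong M_{tr}'\otimes\mathcal{O}(G/H)$ is an injective $G/H$-module, so the rows are acyclic in positive degree, and then $A^G=(A^H)^{G/H}$ identifies $H^p(Tot(K))\cong H^p(G,M)$. If you want to salvage your outline, replace your inner complex by the $H$-fixed points of \eqref{eq5} and replace the filtration-plus-section argument by this two-spectral-sequence comparison.
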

\begin{proof}
Using the Hochschild resolution \eqref{eq5} we construct a double complex
$K^{p,q}=(M\otimes \mathcal{O}(G)^{\otimes(q+1)})^H\otimes \mathcal{O}(G/H)^{\otimes p}$
with differentials $d_0^{p,q}, d_1^{p,q}$ given by the formulas for
$a\in (M\otimes \mathcal{O}(G)^{\otimes(q+1)})^H$ and $b\in\mathcal{O}(G/H)^{\otimes p}$ as follows
$$d_0^{p,q}(a\otimes b)=(-1)^p \widetilde{d}_0^{p,q}(a)\otimes b, \mbox{    } d_1^{p,q}(a\otimes b)=a\otimes \widetilde{d}_1^{p,q}(b),$$
where $$\widetilde{d}_0^{p,q}: (M\otimes \mathcal{O}(G)^{\otimes(q+1)})^H\rightarrow (M\otimes \mathcal{O}(G)^{\otimes(q+2)})^H$$ is the differential extracted from the Hochschild complex \eqref{eq5} and
$$\widetilde{d}_1^{p,q}:(M\otimes \mathcal{O}(G)^{\otimes(q+1)})^H\otimes \mathcal{O}(G/H)^{\otimes p}\rightarrow (M\otimes \mathcal{O}(G)^{\otimes(q+1)})^H\otimes \mathcal{O}(G/H)^{\otimes (p+1)}$$ is the differential in the Hochschild complex $C(G/H,(M\otimes \mathcal{O}(G)^{\otimes(q+1)})^H)$ \eqref{eq7}.
Then $(E_0^{p,q},d_0,d_1)$ is a double complex and we define the Lyndon-Hochschild-Serre spectral sequence as the spectral sequence of Proposition \ref{t3.1}.
Indeed, since $\otimes\mathcal{O}(G/H)^{\otimes p}$ is an exact functor we have $E_1^{p,q}\cong H^q(H,M)\otimes \mathcal{O}(G/H)^{\otimes p}$ and therefore $E_2^{p,q}=H^p(G/H,H^q(H,M))$.

Now we consider the row filtration of $(E^{q,p},d_1,d_0)$. As for any $G$-module $A$ by \eqref{e2} $(A\otimes \mathcal{O}(G))^H\cong A_{tr}\otimes \mathcal{O}(G)^H$ and $A_{tr}\otimes \mathcal{O}(G)^H\cong A_{tr}\otimes \mathcal{O}(G/H)$ it follows $(M\otimes \mathcal{O}(G)^{\otimes(q+1)})^H$ are injective $G/H$-modules, hence for $q>0$
$E_2^{q,p}=H^p(G/H,(M\otimes \mathcal{O}(G)^{\otimes (q+1)})^H)=0$. But as $A^G=(A^H)^{G/H}$ for any $G$-module A, it follows $E_2^{0,p}\cong H^p(G,M)$ and therefore $H^p(Tot(E))\cong H^p(G,M)$.
\end{proof}


\subsection{Topological modules as a dual category}\label{s4}
\begin{definition}
A linearly-compact vector space over a field $k$ (we always assume that $k$ is taken with the discrete topology) is a topological vector space $V$ over $k$ such that:
\begin{description}
\item[1.]{The topology is linear: the open affine subspaces form a basis for the topology;}
\item[2.]{Any family of closed affine subspaces with the finite intersection property has nonempty intersection;}
\item[3.]{The topology is Hausdorf.}
\end{description}
\end{definition}
Linearly-compact vector spaces were introduced in order to preserve the duality that holds for finite-dimensional vector spaces. This can be achieved by introducing a topology on a dual space $V^*$, taking
$$U^{\perp}=\{\phi \in A^{\ast} \mid \phi(U)=0, \mbox{where $U$ is a finite dimensional $k$-subspace of $V$} \}$$ as the basis of the system of neighborhoods of zero in $V^{\ast}$. It turns out that $V^*\cong \varprojlim_{dim_k(U)<\infty } V^*/U^{\perp}$ and $V^*$ is the complete topological vector space.

For any linearly-compact vector space $V$ it makes sense to speak about the dual space $V^\vee$
of continuous linear functions, and the evaluation map defines the continuous duality \cite[1.2]{Die}
(here $V$ is discrete and $W$ is linearly-compact)
$$e: V\rightarrow V^{*\vee}, \quad v\mapsto(\phi \rightarrow \phi(v)),\mbox{  }
\widetilde{e}: W\rightarrow W^{\vee *}, \quad v\mapsto(\phi \rightarrow \phi(v)).$$







Recall that completed tensor product $E \widehat{\otimes}_k F$ of topological $k$-vector spaces $E$ and $F$
is the completion of $E \otimes_k F$ with respect to the topology (called the topology of tensor product)
given by the fundamental system of neighborhoods of 0 consisting of the sets
$V \otimes_k F + E\otimes_k W$, where $V$ (respectively $W$) is an arbitrary element of
the fundamental system of neighborhoods of 0 consisting of vector subspaces of $E$ (respectively $F$) \cite[1.2.4]{Die}. It follows directly from the definition of completed tensor product that $(E \widehat{\otimes}_k F)^{\vee}\cong E^{\vee} \otimes_k F^{\vee}$.

Let $A(m,\Delta,s,e,\varepsilon)$ be a commutative Hopf algebra with multiplication $m:A\otimes A\rightarrow A,$ comultiplication $\Delta: A\rightarrow A\otimes A,$ antipod $s: A\rightarrow A$, unit $e: k\rightarrow A$ and counit $\varepsilon: A\rightarrow k$, this $k$-linear maps are included in standard diagrams of the Hopf algebra definition \cite[1.1, 1.5]{Mont}.

Let $A^*=Hom_k(A,k)$ be the dual and let $m_*=\Delta^*: A^*\widehat{\otimes} A^*\rightarrow A^*$ - (the dual) multiplication, $\Delta_*=m^*: A^*\rightarrow A^*\widehat{\otimes} A^*$ - comultiplication , $\varepsilon_*=e^*$ and $e_*=\varepsilon^*$, $s_*=s^*$. Those $k$-linear maps are also included in standard diagrams of Hopf algebra definition and we come to the
\begin{definition}\label{d8} Let us say that a linearly-compact $k$-vector space $A^*$ with continuous maps $(m_*,\Delta_*,s_*,e_*,\varepsilon_*)$ of linearly-compact vector spaces is
complete Hopf algebra ($CHA$ for short) if $A^*(m_*,\Delta_*,s_*,e_*,\varepsilon_*)$ is included into the standard diagrams from the definition
of the cocommutative Hopf algebra, where the usual tensor products are replaced by the completed ones.
 \end{definition}
\begin{remark}
Our definition is equivalent to the definition of $CHA$ in Quillen's paper \cite{Qui5} if and only if $CHA$ in the Quillen's sence is finitely generated. We additionally require linearly-compactness, bearing in mind that $ CHA $ arise as dual of a Hopf algebra.
\end{remark}

Recollecting what was said above we obtain the following
\begin{corollary}\cite[\S2]{Die}\label{c2}
Assigning $A\mapsto A^*$ yields a one-to-one correspondence between the structures of
commutative Hopf algebras on a vector $k$-space $V$ with discrete topology and the
structures of cocommutative linearly-compact complete Hopf algebras on $A^*$.
\end{corollary}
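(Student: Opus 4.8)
The plan is to exhibit the assignment $A\mapsto A^*$ as a contravariant \emph{monoidal} equivalence between discrete $k$-vector spaces (with $\otimes_k$) and linearly-compact $k$-vector spaces (with $\widehat{\otimes}_k$), and then to observe that the Hopf algebra axioms are self-dual under such an equivalence, so that dualization interchanges commutative Hopf algebra structures on $V$ with cocommutative $CHA$ structures on $A^*=V^*$.

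First I would record that the continuous duality is perfect: the evaluation maps $e:V\rightarrow V^{*\vee}$ and $\widetilde{e}:W\rightarrow W^{\vee*}$ are isomorphisms, so that $V\mapsto V^*$ (discrete to linearly-compact) and $W\mapsto W^{\vee}$ (linearly-compact to discrete) are mutually inverse contravariant functors. In particular $A^{*\vee}\cong A$, which is precisely what allows one to reconstruct $A$ from $A^*$. Next I would establish monoidality: for discrete $V,W$ there is a natural isomorphism $(V\otimes_k W)^*\cong V^*\widehat{\otimes}_k W^*$ of linearly-compact spaces. This follows from the identity $(E\widehat{\otimes}_k F)^{\vee}\cong E^{\vee}\otimes_k F^{\vee}$: applied to $E=V^*$, $F=W^*$ it gives $(V^*\widehat{\otimes}_k W^*)^{\vee}\cong V\otimes_k W$, and taking continuous duals together with $Z\cong Z^{\vee*}$ for the linearly-compact space $Z=V^*\widehat{\otimes}_k W^*$ yields the claim; the base field behaves correctly since $k^*\cong k$.

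With this monoidal duality in hand, every structure map dualizes to a \emph{continuous} map of linearly-compact spaces landing in the correct completed tensor powers: $m^*=\Delta_*:A^*\rightarrow A^*\widehat{\otimes}A^*$, $\Delta^*=m_*:A^*\widehat{\otimes}A^*\rightarrow A^*$, $e^*=\varepsilon_*$, $\varepsilon^*=e_*$, and $s^*=s_*$. Each defining diagram of a Hopf algebra (associativity, coassociativity, the unit and counit conditions, the compatibility of $m$ with $\Delta$, and the antipode axioms) is a commutative diagram built from $m,\Delta,e,\varepsilon,s$ and tensor products; applying the contravariant monoidal functor $(-)^*$ reverses every arrow and replaces $\otimes$ by $\widehat{\otimes}$, carrying each such diagram to precisely the corresponding diagram for $(m_*,\Delta_*,s_*,e_*,\varepsilon_*)$ in the sense of Definition \ref{d8}. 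Commutativity of $m$ becomes cocommutativity of $\Delta_*=m^*$, so a commutative Hopf algebra dualizes to a cocommutative $CHA$ and, by the same token, conversely. For bijectivity I would run the construction in reverse: given a cocommutative $CHA$ structure on the linearly-compact space $A^*$, taking continuous duals of its structure maps and transporting along $A^{*\vee}\cong A$ produces a commutative Hopf algebra structure on $A$, and the two passages are mutually inverse because $e$ and $\widetilde{e}$ are isomorphisms, which is the asserted one-to-one correspondence.

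The step I expect to require the most care is the monoidal compatibility $(V\otimes_k W)^*\cong V^*\widehat{\otimes}_k W^*$, together with the verification that the dualized multiplication genuinely lands in the \emph{completed} tensor product $A^*\widehat{\otimes}A^*$ rather than in the algebraic one. This is exactly where linear-compactness and the completion built into $\widehat{\otimes}$ are indispensable, and where the naive dual of an infinite-dimensional Hopf algebra would fail. Once this compatibility is secured, the self-duality of the Hopf axioms is entirely formal.
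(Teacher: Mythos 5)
Your proposal is correct and matches the paper's own treatment: the paper likewise obtains Corollary \ref{c2} by dualizing the structure maps $(m,\Delta,s,e,\varepsilon)$ to $(m_*,\Delta_*,s_*,e_*,\varepsilon_*)$, using the perfect continuous duality $e$, $\widetilde{e}$ and the identity $(E\widehat{\otimes}_k F)^{\vee}\cong E^{\vee}\otimes_k F^{\vee}$ so that the self-dual Hopf axioms transport to the completed-tensor diagrams of Definition \ref{d8}, with the details deferred to \cite[\S2]{Die}. Your explicit verification of the monoidal compatibility $(V\otimes_k W)^*\cong V^*\widehat{\otimes}_k W^*$ simply spells out what the paper leaves to the cited reference.
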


\begin{example}[Quillen's formulae]\label{e7}
Let $G$ is a finitely generated discrete group, $k[G]$ a group ring with coefficients in the field $k$ of zero characteristics, $I$ - the augmentation ideal. Then $dim_k I/I^2<\infty$, hence the factors $k[G]/I^n$ are of finite dimension and, therefore, $\widehat{k}G=\varprojlim k[G]/I^n$ is a linearly-compact vector space. For $g\in G$ a diagonal coproduct $\Delta(g)=g\otimes g$ is given, by linearity we obtain a continuous in the $ I $-adic topology map $\Delta:kG\rightarrow kG\otimes kG$ and, therefore, $\widehat{\triangle}: \widehat{k}G\rightarrow \widehat{k}G\widehat{\otimes }\widehat{k}G$ is the continuous coproduct, which, together with completed multiplication and antipod, induced by the map $s(g)=g^{-1}$ for $g\in G$, define $CHA$-structure on $\widehat{k}G$.
It turns out \cite[A.3]{Qui5}, \cite[2]{Knu} that the prounipotent group $G^{\wedge}_u$ with the algebra of regular functions
$\mathcal{O}(G^{\wedge}_u)\cong(\widehat{k}G)^\vee$ is the prounipotent completion of $G$ in the sense of Definition \ref{d4}.
\end{example}

The structure of a right $ \mathcal{O}(G) $-comodule on $ M $ is determined by the comodule map
$ \Delta_M: M \rightarrow M \otimes \mathcal{O}(G) $
and hence, by duality, we obtain a continuous mapping
$ \Delta_M^*: M^* \widehat{\otimes} \mathcal{O}(G)^* \rightarrow M^* $
included in the associativity and unit diagrams
\begin{equation}\label{eq8}
$$\xymatrix{M^*\widehat{\otimes} \mathcal{O}(G)^*\widehat{\otimes} \mathcal{O}(G)^* \ar[r]^(0.6){\Delta_{M}^*\widehat{\otimes} id}\ar[d]^{id_{M^*}\widehat{\otimes} \Delta_G^*}  & M^*\widehat{\otimes} \mathcal{O}(G)^*\ar[d]^{\Delta_{M}^*} & M^*\widehat{\otimes} \mathcal{O}(G)^* \ar[r]^(0.6){\Delta_{M}^*}\ar[d]^{id_{M^*}\widehat{\otimes} \varepsilon^*} & M^* \ar[d]^{=}\\
M^*\widehat{\otimes} \mathcal{O}(G)^*\ar[r]^{\Delta_{M}^*} & M^* & M^*\widehat{\otimes} k\ar[r]^{\cong} & M^*}$$
\end{equation}
\begin{definition}\label{d10}
Let $G$ be an affine group scheme, we say that $(M^*,\Delta_M^*)$ is a right topological $\mathcal{O}(G)^*$-module over the complete Hopf algebra $\mathcal{O}(G)^*$ if $M^*$ is a linearly-compact topological vector space with an action of $\mathcal{O}(G)^*$, that is, $M^*$ and continuous $k$-linear mapping $\Delta_M^*$ are included in the associativity and unit diagrams \eqref{eq8} or equivalently $(M^{\vee}, \Delta_M^{*\vee})$ is a right $\mathcal{O}(G)$-comodule.
\end{definition}

We say that a continuous $k$-linear mapping $\phi:M_1^*\rightarrow M_2^*$ is a homomorphism of right topological $\mathcal{O}(G)^*$-modules if the following diagram is commutative
$$\xymatrix{M_1^*\widehat{\otimes} \mathcal{O}(G)^* \ar[r]^(0.6){\Delta_{M_1}^*}\ar[d]^{\phi\widehat{\otimes} Id}  & M_1^*\ar[d]^{\phi} \\
M_2^*\widehat{\otimes} \mathcal{O}(G)^*\ar[r]^(0.6){\Delta_{M_2}^*} & M_2^*}.$$
By Lemma \ref{l2} below, any comodule $M$ over $\mathcal{O}(G)$ can be decomposed into a direct limit $M\cong\varinjlim M_{\lambda}$ of finite dimensional ``rational'' comodules $M_{\lambda}$ over $\mathcal{O}(G_{\lambda})$, where $\mathcal{O}(G_{\lambda})$ are finitely generated (as algebras) Hopf subalgebras of $\mathcal{O}(G)$. By duality, any topological $\mathcal{O}(G)^*$-module $M^*$ can be decomposed into the inverse limit $M^*\cong\varinjlim M^*_{\lambda}$ of finite dimensional topological $\mathcal{O}(G_{\lambda})^*$-modules.

Our concept generalizes Richard Hain's notion \cite[Theorem 3.4]{Hain1992}, topological modules form abelian category since this category is dual to the category of $G$-modules, which is known to be abelian ~\cite[I,~2.9]{Jan}.


\section{Prounipotent groups and their presentations}\label{s3.0}
The purpose of this section is to introduce prounipotent groups, their presentations and prounipotent crossed modules.

After Definition \ref{d5} of a prounipotent group in Section \ref{s1.2} we give examples.
In addition to the survey of prounipotent groups in zero characteristics in Example \ref{e1.1}, we discuss in Example \ref{e3} free prounipotent groups and their Hopf algebras. Example \ref{e6} explains how to look at finite $p$-groups and pro-$p$-groups as at prounipotent affine group schemes over $\mathbb{F}_p$, we also extract information about the algebra of regular functions of the free pro-$p$-group in Example \ref{e4}. An example of computations cohering zero and positive characteristic is discussed in Remark \ref{r2}.

In Section \ref{s3.4}, the duality of fixed points of $ G $ -modules and coinvariants of their dual topological modules is proven.

Presentation theory of prounipotent groups is contained in Sections \ref{s2}-\ref{s2.2}. The specificity of our approach lies in the fact that we consider not only ``proper'' presentations. On simplicial presentations we impose a non-burdensome constraint on the choice of the defining relations.

The concept of a prounipotent crossed module of a prounipotent presentation is introduced in Section \ref{s3.3}, this Section also contains the formulation of the main result.

Besides homotopy theory importance prounipotent groups are especially cute since they have presentations theory perfectly similar to the discrete case, modules over such affine group schemes are simple enough to get deep results of cohomological nature. We refer the reader to \cite{LM}, \cite{LM2}, \cite{Mag} for pioneering papers on prounipotent groups presentations, to classical results on presentations of pro-$p$-groups \cite{Se4}, \cite{Koch}, \cite{ZR}, \cite{Por}, recent result of the author scattered across the articles \cite{Mikh2016}, \cite{Mikh2017}, \cite{Mikh2017a}, \cite{Mikh2020}.
\subsection{Prounipotent groups}\label{s1.2}

\begin{definition} \label{d5} A unipotent group over a field $ k $ is an affine algebraic group scheme $ G $ for which
every nonzero linear representation $ V $ has a nonzero fixed vector $v\in V$, we should call $v$ fixed if $G$ acts trivially on the subspace $kv$, i.e. $\rho(v)=v\otimes 1$ in $V$, when $k=\mathbb{F}_p$ we restrict ourselves to considering constant affine group schemes of finite $ p $-groups \cite[2.3]{Wat}.

An affine group scheme $G$ is called a prounipotent group if there is a decomposition of $G$ into an inverse limit $G=\varprojlim G_{\alpha}$ of unipotent groups $G_{\alpha}$.
\end{definition}

\begin{example}[Prounipotent groups in zero characteristics]\label{e1.1}

There is also the well known correspondence between unipotent groups over a field $k$ of characteristics 0
and nilpotent Lie algebras over $k$, which assigns to a unipotent group its nilpotent Lie algebra.
This correspondence is easily extends to the correspondence between prounipotent groups over $k$
and pronilpotent Lie algebras over $k$.
Functoriality of the correspondence enables one, when it is convenient, to interpret
problems on unipotent groups in the language of Lie algebras.
Prounipotent groups are curious in a sense that we can work with them as they are ordinary groups: the image of a closed subgroup of a prounipotent group under a homomorphism of prounipotent groups will be always a closed subgroup \cite[Theorem 15.3]{Wat}, the group of rational points of a factor is isomorphic to the factor of groups of rational points \cite[18.2 (e)]{Wat}.
Theorems on the structure of normal series, nontriviality of the center of a unipotent group
are transferred from the corresponding statements for Lie algebras.
Recall also that $G(k)\cong \mathcal{G}\mathcal{O}(G)^*$ \cite[Prop. 18]{Vez}, where $\mathcal{G}$ is the functor of group-like elements in the complete Hopf algebra $\mathcal{O}(G)^*$.

Let $A$ be a Hopf algebra over a field $k$ of characteristic 0, in which: 1) the product is commutative; 2) the coproduct is conilpotent or, equivalently, coconnected in terminology of \cite[Theorem 8.3]{Wat}. Then, as an algebra, $A$ is isomorphic to a free commutative algebra \cite[Theorem 3.9.1]{Car}.
Therefore, each group of $k$--points $G_{\alpha}(k)= Hom_{Alg_k}(\mathcal{O}(G),k)$ is isomorphic as an algebraic variety to certain $n_{\alpha}$-dimensional affine space $\mathbb{A}^{n_{\alpha}}_k$ \cite[Theorem 4.4]{Wat} and hence it is an affine algebraic group, and consequently \cite[Corollary 4.4]{Wat} a linear algebraic group. Thus, we freely use results and methods from the theory of linear algebraic groups in characteristic 0.
\begin{remark}\label{r6}
Let $G$ be a prounipotent group and  let $\widetilde{G}\subset G(k)$ be a discrete finitely generated Zariski dense subgroup of $G(k)$, then $\widetilde{G}^{\wedge}_u\cong G$. Indeed, by Quillen's formulae (Example \ref{e7}) $\mathcal{O}(\widetilde{G}^{\wedge}_u)^*\cong\widehat{k}\widetilde{G}$. Since group-like elements are linearly independent in $CHA$ (the proof is the same as \cite[Theorem 2.1.2]{Abe}) and $\cap \widehat{I}^m=0$, where $\widehat{I}$ is the augmentation ideal of $\mathcal{O}(G)^*$, we have that $\mathcal{O}(\widetilde{G}^{\wedge}_u)^*\subseteq \mathcal{O}(G)^*$. By the universal property of $\widetilde{G}^{\wedge}_u$, $\widetilde{G}^{\wedge}_u(k)$ is embedded as a closed subgroup of $G(k)$ and therefore $\widetilde{G}^{\wedge}_u(k)\cong G(k)$.
\end{remark}

\end{example}

\begin{example}[Regular functions of free $k$-prounipotent groups $char (k)=0$]\label{e3}

The concept of a free prounipotent group of finite rank, say $n$, over a field of zero characteristics is well known and is beautifully presented by Vezzani \cite{Vez}. Its algebra of regular functions is the tensor algebra
 $T(V)$ of a $k$-vector space $V, dim_k(V)=n$ with
the so called \emph{deconcatenation coproduct} defined by the rule $\Delta(e_I)=\Sigma_{JK=I}e_J\otimes e_K$ on products $e_I=e_{i_1}\cdot...\cdot e_{i_k}$,
where $I=(i_1,...,i_k)$ - a multi-index and $e_{i_j}\in Z$, $Z$ is a chosen basis of $V$.

The so called \emph{shuffle product} defines a commutative composition law in $T(V)$, that is defined on words of lengths $m$ and $n$ as a sum over
$(m+n)!/m!n!$ ways of interleaving the two words, as shown in the following examples:

$ab \circ xy = abxy + axby + xaby + axyb + xayb + xyab,$
$aaa \circ aa = 10aaaaa.$
It may be defined inductively by formulas:
$u \circ \omega = \omega \circ u = u,$
$ua \circ vb = (u \circ vb)a + (ua \circ v)b,$
where $\omega$ is the empty word, $a$ and $b$ are single elements, and $u$ and $v$ are arbitrary words.

The shuffle product, the deconcatenation coproduct, obvious unit and counit maps and the universal enveloping algebra antipod \cite[Example 1.8]{Abe} endow $T(V)$ with the structure of a commutative conilpotent Hopf algebra and its dual coincide with the algebra of non-commutative power series in $n$ variables $k\langle\langle X_1,...,X_n\rangle\rangle$.
There are isomorphisms of CHA's \cite[Example 2.11, p.271]{Qui5}
$$\widehat{k}\Phi(Z)\xrightarrow{\phi} k\langle\langle X_i\rangle\rangle_{i\in Z}\xleftarrow{\theta}\widehat{\mathcal{U}}LV, \theta(s)=X_s, \phi(s)=e^{X_s}=\sum_{n=0}^{\infty} X_s^n/n!, s\in Z,$$
where $\widehat{\mathcal{U}}LV$ is the completion of the universal enveloping algebra of a free Lie algebra $LV$ on $V$ with respect to the augmentation ideal and $\widehat{k}\Phi(Z)$ is the completion of the group algebra $k[\Phi(Z)]$ of the discrete free group $\Phi(Z)$ with respect to the augmentation ideal.
\end{example}

\begin{example}[Finite $p$-groups as unipotent groups $char(k)=p>0$]\label{e6}

Let $G$ be a finite $p$-group, then the group algebra $\mathbb{F}_p[G]$ is a cocommutative Hopf algebra with coproduct defined on $g\in G$ by the rule $\Delta (g)=g\otimes g$, antipod $s(g)=g^{-1}$, obvious unit, and the augmentation as counit.

We set $\mathcal{O}(G)^*=\mathbb{F}_p[G]$ and it is easy to check that $\mathcal{O}(G)\cong(\mathcal{O}(G)^*)^{\vee} \cong \mathbb{F}_p^G$ is the commutative Hopf algebra of functions from $G$ to $\mathbb{F}_p$ with commutative multiplication given by the rule $(f_1\cdot f_2)(x)=f_1(x)\cdot f_2(x),$ where $f_i\in \mathcal{O}(G)$, $x\in G$ and the (dual) coproduct given on the dual basis
$$\{e_g, g\in G| e_g(h)=1 \mbox{ if } h=g \mbox{ and } e_g(h)=0 \mbox{ if } h\neq g\}$$
by the rule $\Delta (e_g)=\sum_{g_1g_2=g} e_{g_1}\otimes e_{g_2}$, where $g_1, g_2\in G$.
The corresponding affine group scheme \cite[2.3]{Wat} with the algebra of regular functions $\mathcal{O}(G)$ is unipotent.

Indeed, let we are given an arbitrary representation $\rho: M\rightarrow M\otimes \mathcal{O}(G)$, we want to show that there is a subspace $\mathbb{F}_p\cdot v, v\in M$ fixed under the action of $G$ i.e. $\rho(v)=v\otimes 1$. By Lemma \ref{l2} this is equivalent to show that the coinvariants $M^*_G\neq 0$.
 We prove by induction on a rank $n$ of $M^*$. If $n=1$ then $M^*=\mathbb{Z}/p\mathbb{Z}$ is a trivial $G$-module, since $(\mid Aut(\mathbb{Z}/p\mathbb{Z})\mid,p)=(\mid\mathbb{Z}/(p-1)\mathbb{Z}\mid,p)=1$, so
$M^*_G=M^*\neq0$. Let $n=k$, then a submodule of $G$-fixed elements of $M^*$ is not trivial and contains $M^*_0=\mathbb{Z}/p\mathbb{Z}$  with trivial $G$-action.
Let $M^*_1=M^*/M^*_0$ and $\psi:M^*\rightarrow M^*_1$ be the corresponding factorization. Since
$\psi(M^*(g-1))=M^*_1(g-1)$,  $\psi$ induces the surjection $M^*_G \rightarrow (M^*_1)_G$.
But $(M^*_1)_G\neq 0$ by induction hence $M^*_G\neq0$.
It is also clear that $G(\mathbb{F}_p)=Hom_{Alg_k}(\mathcal{O}(G),\mathbb{F}_p)\cong \mathcal{G}\mathcal{O}(G)^*\cong G$.

Passing to the inverse limit of group-valued functors, we prove that pro $p$-groups are naturally $\mathbb {F} _p$-points of prounipotent groups over $\mathbb{F}_p$.

\end{example}
\begin{example}\label{e4}(Regular functions of free pro-$p$-groups $char (k)=p>0$)

Let $F(Z)$ be a free pro-$p$-group over a finite set $Z$.
The complete group ring $\mathbb{F}_pF(Z)$ can be endowed with the structure of a complete Hopf algebra. Consider the decomposition $F(Z)\cong \varprojlim G_{\alpha}$ into the inverse limit of finite $p$-groups $|G_{\alpha}|=p^l$, then $\mathbb{F}_pF(Z)\cong\varprojlim \mathbb{F}_p[G_{\alpha}]$ \cite[7]{Koch}.
Each $\mathbb{F}_p[G_{\alpha}]$ inherits the Hopf algebra structure as in Example \ref{e6}.
Taking the inverse limit $\widehat{\Delta}=\varprojlim \Delta_{\alpha}$ we obtain the coproduct
$\widehat{\Delta}:\mathbb{F}_pF(Z)\rightarrow \mathbb{F}_pF(Z)\widehat{\otimes} \mathbb{F}_pF(Z)$.

By \cite[Proposition 7.16]{Koch}, there is an isomorphism of complete group algebras
$\mathbb{F}_p\langle\langle X_i\rangle\rangle_{i\in Z}\cong \mathbb{F}_pF(Z)$, given on generators $s_i$ of $F(Z)$ by the rule
$s_i\mapsto X_i+1$, $s_i^{-1}\mapsto \sum_{n=0}^{\infty}(-X_i)^n$, $X_i\mapsto s_i-1$.
It is easy to check that on generators $X_i$ of $\mathbb{F}_p\langle\langle X_i\rangle\rangle_{i\in Z}$ the coproduct $\widehat{\Delta}$ is given as $\widehat{\Delta}(X_i)=X_i\widehat{\otimes} X_i+1\widehat{\otimes} X_i+ X_i\widehat{\otimes} 1$.
The continuous dual is also the tensor algebra $\mathbb{F}_p\langle\langle X_i\rangle\rangle_{i\in Z}^{\vee}\cong T(\mathbb{F}_p^Z)$ of the vector space with the basis $e_i, i\in Z$ and the deconcatenation coproduct.
By duality, as in Example \ref{e6}, we get a commutative multiplication in $T(\mathbb{F}_p^Z)$ as follows $(f_1*f_2)(g)=m(f_1\otimes f_2)(\widehat{\Delta}(g))=m(f_1\otimes f_2)(g\otimes g)=f_1(g)\cdot f_2(g)$ and dual antipod.

Set $\mathcal{O}(F(Z))=T(\mathbb{F}_p^Z)$ with just defined Hopf algebra structure.
Since $f_1, f_2$ are continuous functions, there must be some $\alpha,$ that $f_1, f_2$ are factored as composition $f_1,f_2:G\xrightarrow{pr_{\alpha}} G_{\alpha}\xrightarrow{\widetilde{f}_1,\widetilde{f}_2} \mathbb{F}_p$ \cite[Lemma 1.1.16]{ZR}
and, therefore, in the pro $ p $-case, continuous functions are ``regular'' functions from algebraic (finite) factors , embedded by $pr_{\alpha}^*:\mathcal{O}(G_{\alpha})\rightarrow \mathcal{O}(G)$.

\end{example}

\begin{remark}[Cohering  positive and zero characteristic]\label{r2}

It is curious that pro-$p$-groups cohere information between positive and zero characteristics, and the appearance of constant unipotent group schemes over $\mathbb{F}_p$ is predetermined.
Let us illustrate this with an example of calculations that were used in \cite[Proposition 3.19 (proof)]{Mikh2016} to solve specific problems.
Let $F$ be the free prounipotent group of rank $m$ over $\mathbb{Q}_p$, then, using Quillen's formulae, we get
$$\mathcal{O}(F)^*\cong \varprojlim \mathbb{Q}_p[\Phi(m)]/I^n_{\mathbb{Q}_p}\cong \varprojlim \mathbb{Z}_p[\Phi(m)]/I^n_{\mathbb{Z}_p}\otimes_{\mathbb{Z}_p}\mathbb{Q}_p,$$
where $\mathbb{Q}_p[\Phi(m)]$ is the group ring of free discrete group of rank $m$, $I_{\mathbb{Z}_p}$ - the augmentation ideal of $\mathbb{Z}_p[\Phi(m)]$.
It follows from Lazard's theorem \cite[Proposition 7]{Se4}, that $\varprojlim\mathbb{Z}_p[\Phi(m)]/I^n_{\mathbb{Z}_p}\cong \varprojlim A(m)/\mathbb{I}^n\cong\varprojlim \mathbb{Z}_pF_p(m)/\widehat{I}^n_{\mathbb{Z}_p},$ where $\mathbb{Z}_pF_p(m)$ is the complete group ring of the free pro-$p$-group $F_p(m)$ of rank $m$ and $\widehat{I}^n_{\mathbb{Z}_p}$ is the $n$-th power of its augmentation ideal, $A(m)\cong \mathbb{Z}_p\langle\langle m\rangle\rangle$ - the noncommutative power series of $m$ variables over $\mathbb{Z}_p$, $\mathbb{I}$ - its augmentation ideal.
The $mod(p)$-reduction of group rings coefficients
$\varprojlim \mathbb{Z}_pF_p(m)/\widehat{I}^n_{\mathbb{Z}_p}\rightsquigarrow^{mod(p)} \varprojlim \mathbb{F}_pF_p(m)/\widehat{I}^n_{\mathbb{F}_p}\cong \mathcal{O}(F_p(m))^*$ leads to the complete $\mathbb{F}_p$-Hopf algebra of the free pro-$p$-group of rank $m$.

The $\widehat{I}_{\mathbb{F}_p}$-adic filtration induces the so called Zassenhaus filtration of $F_p(m)$ $\mathcal{M}_{n,p}=\{f \in F\mid f-1 \in \widehat{I}_{\mathbb{F}_p}^n, n\in \mathbb{N}\}$ and it follows from \cite[7.4]{Koch} that $F_p(m)/\mathcal{M}_{n,p}\cong \Phi(m)/\Phi(m)\cap\mathcal{M}_{n,p}$ are finite $p$-groups and hence $\mathcal{O}(F_p(m))^*\cong \varprojlim\mathbb{F}_p[\Phi(m)]/I^n_{\mathbb{F}_p}\cong \varprojlim \mathbb{F}_p[\Phi(m)/\Phi(m)\cap\mathcal{M}_{n,p}]\cong \varprojlim \mathcal{O}(\Phi(m)/\Phi(m)\cap\mathcal{M}_{n,p})^*$.
The Hopf algebra structure on $((\mathcal{O}(\Phi(m)/\Phi(m)\cap\mathcal{M}_{n,p})^*)^{\vee}$ is uniquely defined, since there is the only structure of a commutative algebra with unit for the function space of a finite set (analogously to Gelfand's result \cite[11.13(a)]{Ru}) and the ``constant'' comultiplication is given by duality (Example \ref{e6}).
Mimicking Corollary \ref{c2}, we may think that complete group algebras with $\mathbb{Z}_p$-coefficients are $\mathbb{Z}_p$-dual to the distribution algebras \cite[Proposition 1]{Mikh2020}.
\end{remark}

\subsection{Fixed points and coinvariants}\label{s3.4}

From now on $G\cong\varprojlim G_{\beta}$ be a prounipotent group decomposed into an inverse limit of unipotent groups and let $M$ be a right topological $\mathcal{O}(G)^*$-module, denote by $M_G=M/M(g-1)$ - the module of $G$-coinvariants. This is a quotient of $ M $ by a closed submodule generated by elements $\{m(g-1), g\in G(k), m\in M\}$.

\begin{lemma}\label{l2}
Let $G\cong \varprojlim G_{\beta}$ be a prounipotent group and $V$ be a $G$-module, then $V$ is decomposed as $V=\varinjlim V_{\alpha},$ where $V_{\alpha}$ are $G_{\alpha}$-modules and $G_{\alpha}$ could be chosen from the given decomposition $G\cong \varprojlim G_{\beta}$. Dually, there is a decomposition $V^*=\varinjlim V^*_{\alpha}$ into the inverse limit of finite dimensional $\mathcal{O}(G_{\alpha})^*$-modules.
In particular $V=0$ if and only if $V^G=0$, or equivalently $V^*=0$ if and only is $(V^*)_G=0$.
\end{lemma}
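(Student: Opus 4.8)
The plan is to reduce every assertion to the defining property of unipotent groups (Definition \ref{d5}) by writing $V$ as a directed union of finite-dimensional pieces that already live over the prescribed finite-level quotients $G_\beta$, and then to transport the resulting vanishing statements through the continuous duality of Section \ref{s4}. First I would use that a $G$-module is the same as a right $\mathcal{O}(G)$-comodule (Section \ref{s1.01}) together with the fundamental theorem of comodules: every element of $V$ lies in a finite-dimensional subcomodule, so $V$ is the directed union of its finite-dimensional subcomodules $V_\alpha$. For such a $V_\alpha$ the coaction $\Delta_{V_\alpha}\colon V_\alpha\to V_\alpha\otimes\mathcal{O}(G)$ has finite-dimensional image and hence lands in $V_\alpha\otimes C$ for some finite-dimensional subspace $C\subseteq\mathcal{O}(G)$. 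Since $G\cong\varprojlim G_\beta$ yields $\mathcal{O}(G)\cong\varinjlim\mathcal{O}(G_\beta)$ as a directed union of the Hopf subalgebras attached to the finite-level quotients, directedness forces $C\subseteq\mathcal{O}(G_\beta)$ for a single $\beta$; thus $V_\alpha$ is a comodule over $\mathcal{O}(G_\beta)$, that is, a finite-dimensional $G_\beta$-module. Reindexing gives $V=\varinjlim V_\alpha$ with each $V_\alpha$ a finite-dimensional $G_\alpha$-module drawn from the given decomposition. Applying the contravariant continuous-duality equivalence of Section \ref{s4}, which carries the filtered colimit to a cofiltered limit, then produces the dual decomposition $V^*\cong\varprojlim V_\alpha^*$ into finite-dimensional topological $\mathcal{O}(G_\alpha)^*$-modules.

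For the ``in particular'' clause only the implication $V^G=0\Rightarrow V=0$ requires argument, the converse being trivial. Since the coaction of $V_\alpha$ factors through $\mathcal{O}(G_\alpha)$, a $G_\alpha$-fixed vector of $V_\alpha$ is automatically $G$-fixed in $V$ (by the characterization \eqref{eq2} of fixed points, as $1_{\mathcal{O}(G_\alpha)}=1_{\mathcal{O}(G)}$), so $V_\alpha^{G_\alpha}\subseteq V^G$ for every $\alpha$. Hence $V^G=0$ forces $V_\alpha^{G_\alpha}=0$ for all $\alpha$; but each $G_\alpha$ is unipotent, so by Definition \ref{d5} a representation with no nonzero fixed vector must itself vanish, whence $V_\alpha=0$ for all $\alpha$ and $V=\varinjlim V_\alpha=0$. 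Finally, the equivalent dual formulation $V^*=0\iff(V^*)_G=0$ follows from the duality of Section \ref{s4}: it gives $V=0\iff V^*=0$ and matches the inclusion $V^G\hookrightarrow V$ with the quotient $V^*\twoheadrightarrow(V^*)_G$, so that $V^G$ and $(V^*)_G$ vanish together.

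The hard part will be the first step, namely making precise that each finite-dimensional subcomodule already lives over one of the prescribed quotients $G_\beta$. This is where one must couple the coefficient/comatrix argument (finiteness of the image of the coaction) with the directedness of $\mathcal{O}(G)\cong\varinjlim\mathcal{O}(G_\beta)$; once this is in place, everything else is a formal consequence of unipotence and of the duality already established in Section \ref{s4}.
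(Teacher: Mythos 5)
Your proposal is correct and takes essentially the same route as the paper: local finiteness of comodules (Waterhouse 3.3) to write $V$ as a directed union of finite-dimensional pieces living over finite levels of the decomposition, the unipotent fixed-point property of Definition \ref{d5} for the vanishing criterion, and continuous duality (which the paper delegates to Lemma \ref{l4}) for the dual statements about $V^*$ and $(V^*)_G$. Your only deviation is cosmetic: where the paper cites Waterhouse 3.3(2) to place the matrix coefficients $a_{ij}, a_{ijk}$ in a finitely generated Hopf subalgebra, you instead use directedness of $\mathcal{O}(G)\cong\varinjlim\mathcal{O}(G_\beta)$ to land the coefficient space in a single $\mathcal{O}(G_\beta)$ — which in fact makes explicit the step the paper leaves implicit in the claim that $G_\alpha$ can be chosen from the given decomposition.
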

\begin{proof}
According to \cite[3.3, Theorem (1)]{Wat} any finite subset $X$ of $V$ lies in a $k$-subspace of finite dimension $W\subseteq V$ such that $\Delta(W)\subset W\otimes \mathcal{O}(G)$.
Let $\{w_i\}$ be a basis of $W$ and consider $\Delta(w_i)=\sum w_j\otimes a_{ij}$. We fix the basis $\{v_k\}$ of $\mathcal{O}(G)$ and let $\Delta(a_{ij})=\sum v_k\otimes a_{ijk}$, then \cite[3.3, Theorem (2)]{Wat} says that $k[a_{ij},a_{ijk},Sa_{ij},Sa_{ijk}]\subseteq\mathcal{O}(G)$ is a finitely generated (as algebra) Hopf subalgebra of $ \mathcal{O}(G)$.

It is the intrinsic property of a unipotent group to have a fixed point in any nontrivial representation and therefore $V=0$ if and only $V^G=0$, the rest follows from Lemma \ref{l4} below.
\end{proof}
\begin{lemma}\label{l4} Let $M$ be a $\mathcal{O}(G)$-comodule over the Hopf algebra of regular functions of a prounipotent group $G$ and let $M^*=Hom_k(M,k)$ be the dual topological $\mathcal{O}(G)^*$-module. Then linearly compact topological spaces $(M^G)^*\cong (M^*)_G$ are naturally isomorphic.
\end{lemma}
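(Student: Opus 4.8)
The plan is to realize the two sides as the beginning and the end of a single short exact sequence that are interchanged by the continuous duality of Section~\ref{s4}; the prounipotence (resp.\ pro-$p$-ness) of $G$ will enter only at the very last identification. First I would record the comodule description of the fixed points: by \eqref{eq2} we have $M^G=\mathrm{Ker}\,\beta$, where $\beta=\Delta_M-(id_M\otimes 1)\colon M\to M\otimes\mathcal O(G)$, so that
\[
0\to M^G\to M\xrightarrow{\ \beta\ } M\otimes\mathcal O(G)
\]
is left exact. Since $V\mapsto V^{*}$ is an exact anti-equivalence between discrete and linearly-compact $k$-spaces and carries ordinary tensor products to completed ones, $(M\otimes\mathcal O(G))^{*}\cong M^{*}\widehat\otimes\mathcal O(G)^{*}$ (this is dual to $(E\widehat\otimes F)^{\vee}\cong E^{\vee}\otimes F^{\vee}$ with $E=M^{*}$, $F=\mathcal O(G)^{*}$). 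Applying this duality turns the kernel $M^G$ into a closed cokernel and yields the right-exact sequence
\[
M^{*}\widehat\otimes\mathcal O(G)^{*}\xrightarrow{\ \beta^{*}\ } M^{*}\to (M^G)^{*}\to 0 ,
\]
whence $(M^G)^{*}\cong\mathrm{Coker}\,\beta^{*}=M^{*}/\overline{\mathrm{Im}\,\beta^{*}}$, the quotient being automatically Hausdorff in the linearly-compact category.

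Next I would compute $\beta^{*}$ explicitly. Dualizing the two summands of $\beta$, the map $\Delta_M^{*}$ is precisely the right action of $\mathcal O(G)^{*}$ on $M^{*}$ of Definition~\ref{d10}, while a direct evaluation gives $\iota^{*}(\phi\widehat\otimes a)=a(1)\,\phi=\varepsilon_{*}(a)\,\phi$, where $\varepsilon_{*}=e^{*}$ is the counit of $\mathcal O(G)^{*}$. Hence $\beta^{*}(\phi\widehat\otimes a)=\phi\cdot\bigl(a-\varepsilon_{*}(a)1\bigr)$, and as $a-\varepsilon_{*}(a)1$ ranges over the augmentation ideal $\mathcal O(G)^{*}_{+}=\mathrm{Ker}\,\varepsilon_{*}$ while $a$ ranges over $\mathcal O(G)^{*}$, we obtain $\mathrm{Im}\,\beta^{*}=M^{*}\cdot\mathcal O(G)^{*}_{+}$. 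Consequently $(M^G)^{*}\cong M^{*}/\overline{M^{*}\cdot\mathcal O(G)^{*}_{+}}$.

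It then remains to identify this quotient with $(M^{*})_G=M^{*}/\overline{\langle m(g-1)\rangle}$. Since $g-1\in\mathcal O(G)^{*}_{+}$ for every group-like $g\in G(k)$, the inclusion $\overline{\langle m(g-1)\rangle}\subseteq\overline{M^{*}\cdot\mathcal O(G)^{*}_{+}}$ is immediate; for the reverse I would show that the group-like elements topologically span $\mathcal O(G)^{*}$, so that $\{g-1\}_{g\in G(k)}$ topologically spans $\mathcal O(G)^{*}_{+}$ and, by continuity of the action $\Delta_M^{*}$, the two closed submodules agree. In characteristic zero this spanning is Quillen's formula (Example~\ref{e7}, Remark~\ref{r6}): $\mathcal O(G)^{*}\cong\widehat k\widetilde G$ is the completed group algebra of a dense discrete subgroup, densely spanned by its group-likes; in positive characteristic $\mathcal O(G)^{*}\cong\mathbb F_pG$ is the complete group algebra (Examples~\ref{e6}, \ref{e4}), again densely spanned by the group-likes. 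This gives the desired isomorphism, and naturality in $M$ is clear from the functoriality of $\beta$, of the duality, and of passage to cokernels.

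The main obstacle is exactly this last identification: the formal half --- exact duality turning $M^G=\mathrm{Ker}\,\beta$ into $\mathrm{Coker}\,\beta^{*}$ --- is routine given the self-dual abelian setup of Section~\ref{s4}, but matching $\overline{M^{*}\cdot\mathcal O(G)^{*}_{+}}$ with the concrete ``$g-1$'' description of coinvariants genuinely needs the topological density of group-likes, which holds for prounipotent and pro-$p$ groups but fails for general affine group schemes. If one prefers to sidestep the density argument, the same conclusion follows by reducing along Lemma~\ref{l2} to the finite-dimensional modules $V_\alpha$ over the finite quotients $G_\alpha$, where $(V_\alpha^{G_\alpha})^{*}\cong (V_\alpha^{*})_{G_\alpha}$ is the elementary fact that the annihilator of the invariants equals the augmentation image, and then passing to the limit via $(M^G)^{*}\cong(\varinjlim V_\alpha^{G_\alpha})^{*}\cong\varprojlim (V_\alpha^{*})_{G_\alpha}\cong (M^{*})_G$.
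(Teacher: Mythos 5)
Your proof is correct and follows essentially the same route as the paper: you dualize the fixed-point sequence \eqref{eq2} to identify $(M^G)^*$ with the quotient of $M^*$ by $\mathrm{Im}\,(\Delta_M-id_M\otimes 1)^*=M^*\cdot I$ ($I$ the augmentation ideal of $\mathcal{O}(G)^*$), and then invoke Quillen's formulae to recognize this image as topologically spanned by the elements $m(g-1)$. Your explicit computation of $\beta^*$ and the density-of-group-likes phrasing, together with your fallback reduction via Lemma \ref{l2} to finite-dimensional $\mathcal{O}(G_\alpha)^*$-modules, are just a more detailed rendering of exactly the paper's argument, which performs the same Lemma \ref{l2} decomposition before applying Quillen's formulae on the finite-dimensional pieces.
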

\begin{proof} We prove in the case of zero characteristics, the pro-$p$-case is analogous. We start with a short exact sequence of \eqref{eq2}
$0\rightarrow M^G\xrightarrow{i} M\otimes k \xrightarrow{\Delta_M-id_M\otimes 1} M\otimes \mathcal{O}(G),$
where $i:M^G\rightarrow M\cong M\otimes k$ is the embedding $i(m)=m\otimes 1$.
Consider the dual exact sequence of linearly-compact topological $k$-vector spaces

$$\xymatrix{0& (M^G)^* \ar[l]& (M\otimes k)^* \ar[l]_{i^*}&& M^*\widehat{\otimes}\mathcal{O}(G)^* \ar[ll]_{(\Delta_M-id_M\otimes 1)^*}}.$$

We have a decomposition of topological vector spaces $\mathcal{O}(G)^*=k\cdot 1+ I$ and, therefore, $Im\;(\Delta_M-id_M\otimes 1)^*=M^*\cdot I$, where $I=Ker\; (\mathcal{O}(G)^*\rightarrow k)$ - the augmentation ideal.
Lemma \ref{l2} gives the decomposition $M^*\cong \varprojlim M^*_{\alpha}$, where $M^*_{\alpha}$ are finite dimensional topological $\mathcal{O}(G_{\alpha})^*$-modules (in the notations of Lemma \ref{l2}) and, therefore, $M^*\cdot I\cong \varprojlim (M^*_{\alpha}\cdot I_{\alpha})$, where $I_{\alpha}$ - the augmentation ideal of $\mathcal{O}(G_{\alpha})^*$.
Now from Quillen's formulae (Examples \ref{e7}, \ref{e1.1}) it follows that $M^*_{\alpha}\cdot I_{\alpha}$ is generated by the elements $\{m\cdot g-m|m\in M_{\alpha}, g\in G_{\alpha}(k)\}$ and the result follows.
\end{proof}

\subsection{Presentations of prounipotent groups}\label{s2}
In Section \ref{s1.2} we saw that a prounipotent group, both in zero characteristics and in the pro-$p$-case, can be defined as group-valued functors representable by means of commutative Hopf algebras.
The notion of a free prounipotent group $F(Z)$ over a finite set $Z$ is well known and nicely explained, as mentioned above in \cite[3]{Vez}, \cite{HM2003}.
The group of rational points $F(Z)(k)$ posses a universal property with respect to mappings of $Z$ into rational points of prounipotent groups, since it is the prounipotent completion of the free discrete group $\Phi(Z)$ on $Z$.
 We extend the notion of a free prounipotent group to an arbitrary set $Z$, in the case of an algebraically closed ground field of zero characteristics it coincides with \cite[Definition 2.1]{LM}.

 Let $Z$ be a set and let $\{Z_i|i\in J\}$ be the collection of all finite subsets of $Z$. Make $J$ into a poset by defining $i\preceq j$
if $Z_i\subseteq Z_j$. If $i\preceq j$ define $\psi^j_i:\mathcal{O}(F(Z_i))\rightarrow \mathcal{O}(F(Z_j))$ as the tensor algebras embedding of Examples \ref{e3}, \ref{e4}, induced by the inclusion of finite sets
$\psi^j_i:Z_i\rightarrow Z_j$, recall that in both cases, in the case of $ char (k) = 0 $ and in the pro-$p$-case, we have, as vector spaces, $\mathcal{O}(F(Z_i)(k))=T(k^{Z_i})$.
We define the Hopf algebra of regular functions $\mathcal{O}(F(Z))$ of a free prounipotent group over an arbitrary set $Z$ as
$$\mathcal{O}(F(Z))= \varinjlim_{i\in J}\mathcal{O}(F(Z_i)).$$
In the pro-$p$-case, for $f_1\in \mathcal{O}(F(Z_1)), f_2\in \mathcal{O}(F(Z_2))$ their product $f_1\cdot f_2$ is properly defined as an element of $\mathcal{O}(F(Z_1\cup Z_2))$ as in Example \ref{e4}.

\begin{definition} \label{d006} By a free $k$-prounipotent group ($F(Z), char (k)=0$) over a set $Z$ we assume a $k$-prounipotent group with a Hopf algebra of regular functions $\mathcal{O}(F(Z))\cong T(k^Z)$ isomorphic to the tensor algebra of a vector space $k^Z$ with the shuffle product multiplication, the deconcatenation coproduct and the universal enveloping algebra antipod.

By a free pro-$p$-group ($F(Z), k=\mathbb{F}_p$) over a set $Z$ we assume a $\mathbb{F}_p$-prounipotent affine group scheme with a Hopf algebra of regular functions $\mathcal{O}(F(Z))\cong T(k^Z)$ isomorphic to the tensor algebra of a vector space $k^Z$ with the product of functions as their multiplication, the deconcatenation coproduct and antipod, induced using duality by the group inversion.
\end{definition}

\begin{lemma}\label{l7} Let $X,Y$ be sets, then $F(X\cup Y)$ is the direct sum of $F(X)$ and $F(Y)$: $F(X\cup Y)\cong F(X)\star F(Y)$.
\end{lemma}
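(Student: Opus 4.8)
The plan is to deduce the isomorphism formally from the universal property of free prounipotent groups stated just above the lemma, via a Yoneda argument, rather than by comparing Hopf algebras directly. Throughout I read $X\cup Y$ as a disjoint union ($X\cap Y=\emptyset$), which is forced by the shape of the conclusion. The starting point is that, in both the zero-characteristic and the pro-$p$ situation, $F(Z)$ is the prounipotent (resp. pro-$p$) completion $\Phi(Z)^{\wedge}_u$ of the discrete free group $\Phi(Z)$, and hence corepresents the functor $H\mapsto H(k)$ on the category of prounipotent groups: restriction along $Z\hookrightarrow\Phi(Z)\subseteq F(Z)(k)$ should give a bijection, natural in $H$,
$$Hom(F(Z),H)\cong Map(Z,H(k)),$$
where $Hom$ is taken among prounipotent groups. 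In other words, $F(-)$ is left adjoint to the rational-points functor, and the whole lemma is then the general fact that a left adjoint carries coproducts to coproducts.

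Granting the displayed universal property, the computation is immediate. Since $X\cup Y$ is disjoint, a set map out of it is exactly a pair of set maps, so $Map(X\cup Y,S)\cong Map(X,S)\times Map(Y,S)$ naturally in $S$; putting $S=H(k)$ and using the universal property three times yields, naturally in $H$,
$$Hom(F(X\cup Y),H)\cong Map(X,H(k))\times Map(Y,H(k))\cong Hom(F(X),H)\times Hom(F(Y),H).$$
The last functor is by definition the one corepresented by the coproduct $F(X)\star F(Y)$ in the category of prounipotent groups. Hence $F(X\cup Y)$ corepresents it as well, and Yoneda's lemma produces a canonical isomorphism $F(X\cup Y)\cong F(X)\star F(Y)$; as a bonus this simultaneously shows that the coproduct exists and is computed by $F(X\cup Y)$.

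The hard part is establishing the universal property itself, i.e. the existence and uniqueness of the extension. For existence, given $\phi\colon Z\to H(k)$ I would form the induced homomorphism $\chi\colon\Phi(Z)\to H(k)$ and replace $H$ by the Zariski closure $\overline{H}$ of $\chi(\Phi(Z))$; this $\overline{H}$ is a closed, hence again prounipotent, subgroup by \cite[Theorem 15.3]{Wat}, and $\chi$ is Zariski dense in it, so Definition \ref{d4} supplies a unique $\tau\colon F(Z)\to\overline{H}\hookrightarrow H$. For uniqueness, two homomorphisms $F(Z)\to H$ agreeing on $Z$ agree on the Zariski-dense subgroup $\Phi(Z)$, their images are closed subgroups containing $\chi(\Phi(Z))$ and therefore both factor through $\overline{H}$, where the uniqueness clause of Definition \ref{d4} finishes the argument. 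The pro-$p$-case is identical after replacing Zariski density by density in the pro-$p$ topology and $H(k)$ by the underlying pro-$p$-group, and for infinite $Z$ the universal property follows by passing to the filtered colimit over finite subsets $Z_i\subseteq Z$ used to define $\mathcal{O}(F(Z))$.
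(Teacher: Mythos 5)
Your formal skeleton (coproducts from corepresentability plus Yoneda) is sound, but the universal property you base it on is false in exactly the generality the lemma requires. The statement $Hom(F(Z),H)\cong Map(Z,H(k))$ fails for infinite $Z$, and the lemma is asserted (and later used in the paper, e.g.\ for $F(\widehat{X}^*\cup\widehat{Y}^*,*)$ in Proposition \ref{p1}) for arbitrary sets. By Definition \ref{d006} one has $\mathcal{O}(F(Z))=\varinjlim_i T(k^{Z_i})$ over \emph{finite} subsets $Z_i\subseteq Z$, so $F(Z)\cong\varprojlim F(Z_i)$; any homomorphism from $F(Z)$ to a unipotent group has comorphism landing in some finite stage $T(k^{Z_i})$ (the target Hopf algebra is finitely generated and the transition maps are injective), hence factors through $F(Z_i)$ and kills all but finitely many generators. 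Concretely, $Hom(F(Z),\mathbb{G}_a)$ is the set of primitives of $T(k^Z)$ for the deconcatenation coproduct, i.e.\ \emph{finite} $k$-linear combinations of elements of $Z$, so a set map $Z\to k$ nonzero on infinitely many generators admits no extension: $F(-)$ is not left adjoint to the points functor on all of $\mathbf{Sets}$. The correct universal property — the one the paper establishes in Proposition \ref{p001} and Remark \ref{r4} — is with respect to continuous \emph{convergent} maps of the pointed one-point compactification $(\widehat{Z}^*,*)$. Relatedly, your identification $F(Z)=\Phi(Z)^{\wedge}_u$ is only justified for finite $Z$ (Quillen's formula in Example \ref{e7} needs finite generation for $\widehat{k}G$ to be linearly compact), so the appeal to Definition \ref{d4} proves existence only at the finite level; and your closing sentence does not repair this, since the filtered colimit computes $Hom(F(Z),H)\cong\varinjlim_i Map(Z_i,H(k))$, which is the set of maps trivial outside a finite subset (convergent maps for $H$ unipotent), not all of $Map(Z,H(k))$.

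The gap is reparable, and the repaired argument converges to the paper's. Replace $Map(Z,H(k))$ throughout by convergent pointed maps from $(\widehat{Z}^*,*)$; since $(\widehat{X\cup Y}^*,*)$ is the wedge $(\widehat{X}^*,*)\vee(\widehat{Y}^*,*)$, such maps out of the union are exactly pairs of such maps on the pieces, and your three-step Yoneda computation then goes through verbatim. The paper avoids invoking the infinite universal property at this stage by arguing dually: it shows via Yoneda that $\mathcal{O}(F(X\cup Y))$ is the product of $\mathcal{O}(F(X))$ and $\mathcal{O}(F(Y))$, writing an arbitrary $\mathcal{O}(G)$ as a filtered colimit of finitely generated Hopf subalgebras $A_{\alpha}$ \cite[Theorem 3.3]{Wat}, observing that each $\phi_j(A_{\alpha})$ lands in a finite-stage tensor algebra, invoking the coproduct property only for finitely generated free prounipotent groups — precisely the case where your completion-plus-Zariski-closure argument for existence and uniqueness is valid and matches the paper's justification — and finally setting $\psi=\varinjlim\psi_{\alpha}$. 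In short: your finite-rank core is correct; the defect is solely in promoting it to arbitrary sets through a false adjunction instead of through the reduction to finitely generated Hopf subalgebras (or, equivalently, through the convergent-maps universal property).
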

\begin{proof} By Yoneda's Lemma we prove, that $\mathcal{O}(F(X\cup Y))$ with natural projections $p_1:\mathcal{O}(F(X\cup Y))\rightarrow \mathcal{O}(F(X)), p_2:\mathcal{O}(F(X\cup Y))\rightarrow \mathcal{O}(F(Y))$ is the direct product of $\mathcal{O}(F(X))$ and $\mathcal{O}(F(Y))$, i.e. we must show, that for any $A=\mathcal{O}(G),$ where $G$ is a prounipotent group, and any Hopf algebra homomorphisms $\phi_1:A\rightarrow \mathcal{O}(F(X))$ and $\phi_2:A\rightarrow \mathcal{O}(F(Y))$ there is a unique $\psi: A\rightarrow \mathcal{O}(F(X\cup Y))$, such that $p_1=i_1 \psi$ and $p_2=i_2 \psi$. Since $A\cong \varinjlim A_{\alpha},$ where $A_{\alpha}$ are finitely generated (as algebras) Hopf algebras \cite[Theorem 3.3]{Wat} and since $\phi_1(A_{\alpha})\subseteq T(k^{X_{\alpha}})$, $\phi_2(A_{\alpha})\subseteq T(k^{Y_{\alpha}})$, where $X_{\alpha}\subseteq X$, $Y_{\alpha}\subseteq Y$ are finite subsets of $X,Y$, then $F(X_{\alpha}\cup Y_{\alpha})\cong F(X_{\alpha})\star F(Y_{\alpha})$ by the universal property of finitely generated free prounipotent groups, which holds since they are prounipotent completions of finitely generated free discrete groups on the same generating sets, therefore
there is a unique $\psi_{\alpha}:A_{\alpha}\rightarrow T(k^{X_{\alpha}\cup Y_{\alpha}})$ with the required property, and we define $\psi=\varinjlim \psi_{\alpha}$.
\end{proof}

\begin{remark}[Duality between tensor algebra generators and free prounipotent group generators]\label{e5}
Let $Z$ be a finite set and $\mathcal{O}(F(Z))\cong T(k^Z)$ be the Hopf algebra of regular functions of a free prounipotent (pro-$p$-group) group $F(Z)$. It follows from definition of the deconcatenation coproduct that a subspace of primitive elements $\mathcal{P}\mathcal{O}(F(Z))=\{x\in \mathcal{P}\mathcal{O}(F(Z))| \Delta(x)=1\otimes x + x\otimes 1, \varepsilon(x)=0\}$, where $\varepsilon$ is the counit, coincides with $k^Z$.

On the other hand $\mathcal{P}\mathcal{O}(F(Z))\cong C_1/C_0,$ where $C_k=Ann_{\mathcal{O}(F(Z)} I^{k+1}$ is the annihilator of $(k+1)$-th power of the augmentation ideal $I$ of $\mathcal{O}(F(Z))^*$, as
$C_0=Ann_{\mathcal{O}(F(Z)} I=k\cdot 1$ and $C_1=k\cdot 1\oplus \mathcal{P}\mathcal{O}(F(Z))$. Indeed, $x\in Ann_{\mathcal{O}(F(Z)} I^2$ if and only if $\Delta(x)=y\otimes1+ 1\otimes z; y, z\in \mathcal{O}(F(Z))$ ($x(i_1\cdot i_2)=m(i_1\otimes i_2)\Delta(x)$ and $1\cdot k$ is the annihilator of $I$), it follows from Hopf algebra axioms, that this is equivalent if $\epsilon(x)=0$ to $\Delta(x)=1\otimes x+1\otimes x$.
By duality, $Ann_{\mathcal{O}(F(Z))} I^k\cong (\mathcal{O}(F(Z))/I^k)^{\vee}$ and hence
$$I/I^2\cong Ker\;(\mathcal{O}(F(Z))/I^2\rightarrow \mathcal{O}(F(Z))/I)\cong (C_1/C_0)^*\cong (\mathcal{P}\mathcal{O}(F(Z))^*$$
and therefore  $I/I^2\cong (k^Z)^*$.

It follows from \cite[7.4, Lemma 4.10 (proof (I)]{Koch} that for a free pro-$p$-group $F=F(Z)$ the formulae $(X_i-1) +I^2\mapsto s_i\cdot[F,F]F^p, s_i\in Z$ gives the isomorphism $I/I^2\cong F/[F,F]F^p$.
We can also deduce From Example \ref{e3} that $I/I^2\cong F/[F,F](k)$ for a free prounipotent group over the base field $k$ of zero characteristic, where ,in both cases, $I$ is the augmentation ideal of $\mathcal{O}(F(Z))^*$.

By \cite[Proposition 25]{Se4} in the pro-$p$-case and by \cite{LM}, when $char(k)=0$, the sets of Zariski generators of $F(Z)(k)$ are in one-to-one correspondence  with the lifts of the bases of $F/F^p[F,F]$, in the pro-$p$-case, and of $F/[F,F](k)$, when $char(k)=0$, under the abelianization homomorphism $F\rightarrow F/F^p[F,F]$, in the pro-$p$-case, and $F(k)\rightarrow F/[F,F](k)$, $char(k)=0$.
As a consequence we have the right to consider the generators of the free prounipotent (pro-$p$) group $F(Z)(k)$ as elements of $Z^*$ dual to the basis $Z$ of $k^Z$ which generate $T(k^Z)\cong \mathcal{O}(F(Z))$. This lifting can be visualized using Quillen's formula, since $\Phi(Z)\subset F(Z)(k)$, as in Example \ref{e1.1}, and $\Phi(Z)$ maps to $\Phi(Z)/[\Phi(Z),\Phi(Z)]$ under the abelianization homomorphism $F(k)\rightarrow F/[F,F](k)$ \cite[Appendix A.1.]{HM2003}.
\end{remark}

In order to characterize free prounipotent groups as groups with a universal property (Definition \ref{d11} and Remark \ref{r4} below) we introduce an inverse system of pointed finite sets which arises by duality of Remark \ref{e5} from the direct system of finite subsets of $Z$,
this will be convenient when working with free topological modules in Section \ref{s3.1}.
\begin{remark}[Pointed profinite spaces of generators]\label{r5}
Let $Z$ be a set and let $\{Z_i|i\in J\}$ be the collection of all finite subsets of $Z$. Make $J$ into a poset by defining $i\preceq j$
if $Z_i\subseteq Z_j$. If $i\preceq j$ we define $\phi^j_i:F(Z_j^*,*)\rightarrow F(Z_i^*,*)$ as a homomorphism of free finitely generated prounipotent groups induced by the map of punctured finite spaces $\phi^j_i:(Z_j^*,*)\rightarrow (Z_i^*,*)$ (as in the Remark \ref{e5} we consider the generators of $Z_i^*\hookrightarrow F(Z_i)(k)$ as a dual basis of $(k^{Z_i})^*$, where $\mathcal{O}(F(Z_i)(k))=T(k^{Z_i})$)
\begin{equation}\label{eq3}
\phi^j_i(z)=z, \mbox{if } z\in Z_i^*, \phi^j_i(z)=\{*\}, \mbox{ if } z\in Z_j^* - Z_i^*, \phi^j_i(*)=\{*\}.
\end{equation}
We always consider $(Z_i^*,1)$ as the subspace of rational points $F(Z_i)(k)$, and we also assume that the specified space is nested $(Z_i^*,*)\hookrightarrow F(Z_i)(k)$ by the rule $z \mapsto z, z\in Z_i$ and $*\mapsto 1\in F(Z_i)(k)$, where $i\in J$.

Left exactness of $\varprojlim$ implies that $(\widehat{Z}^*,*)=\varprojlim_{i\in J} (Z_i^*,*)$ is actually a profinite subspace of $F(Z)(k)$ (this subspace is actually a one-point compactification of $Z$). We will write $F=F(\widehat{Z}^*,*)$, emphasizing that $F(k)$ is generated by $(\widehat{Z}^*,*)$.
\end{remark}


Now we show that the group of Definition \ref{d006} possess a universal property with respect to the ``convergent'' maps of the ``pointed profinite basis'' $(\widehat{Z}^*,*)$ of $F(\widehat{Z}^*,*)(k)$ to rational points $G(k)$ of the unipotent group $G$.

\begin{proposition}\label{p001}
Let $G$ be a unipotent group, then there is a bijection between
homomorphisms $f:F(\widehat{Z}^*,*)\rightarrow G$ and continuous maps $f':(\widehat{Z}^*,*)\rightarrow G(k)$, such that $f'(*)=e$ and  $f'(\widehat{Z}^*,*)$ has finite cardinality.
\end{proposition}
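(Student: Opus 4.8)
The strategy is to pass to Hopf algebras of regular functions and to reduce both sides of the claimed bijection to the finite subsets $Z_i$ of $Z$, where the universal property is already available. On the group side recall from Remark~\ref{r5} that $F(\widehat{Z}^*,*)=\varprojlim_{i\in J}F(Z_i)$ along the retractions $\phi^j_i$, so on regular functions $\mathcal{O}(F(\widehat{Z}^*,*))=\varinjlim_{i\in J}\mathcal{O}(F(Z_i))$ is a filtered colimit of Hopf algebras (Definition~\ref{d006}). On the target side a pointed continuous map $f'\colon(\widehat{Z}^*,*)\to G(k)$ of finite image is, because $(\widehat{Z}^*,*)$ is the one-point compactification of the discrete set $Z$, nothing but a set map $Z\to G(k)$ that equals $e$ outside a finite subset; such a map is supported on some $Z_i$ and thus factors through the finite quotient $(Z_i^*,*)$. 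Hence both sides are naturally colimits over $J$, and the plan is to produce the bijection stage by stage and then pass to the limit.

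Second, I would record the finite-stage universal property: for a finite set $Z_i$, prounipotent homomorphisms $F(Z_i)\to G$ are in bijection with set maps $Z_i\to G(k)$, equivalently with pointed maps $(Z_i^*,*)\to G(k)$ sending $*\mapsto e$. Given $\alpha\colon Z_i\to G(k)$, extend it to a homomorphism $\widetilde{\alpha}\colon\Phi(Z_i)\to G(k)$ of the discrete free group, let $H\subseteq G$ be the Zariski closure of its image (a closed, hence unipotent, subgroup), and note $\widetilde{\alpha}\colon\Phi(Z_i)\to H(k)$ is Zariski dense. Since $F(Z_i)$ is the prounipotent completion of $\Phi(Z_i)$, Definition~\ref{d4} yields a unique $\tau\colon F(Z_i)\to H\hookrightarrow G$ extending $\alpha$ along $Z_i^*\hookrightarrow F(Z_i)(k)$; conversely restricting a homomorphism to the generators $Z_i^*$ recovers $\alpha$, and the uniqueness clause of Definition~\ref{d4} (applied to the closure of the image) shows the two constructions are mutually inverse.

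Third, the heart of the argument is to show that an arbitrary prounipotent homomorphism $f\colon F(\widehat{Z}^*,*)\to G$ already factors through a finite stage. Because $G$ is unipotent it is algebraic (Definition~\ref{d5}), so $\mathcal{O}(G)$ is a finitely generated, hence --- over the Noetherian ground field $k$ --- finitely presented commutative algebra. Finitely presented objects are compact in the category of $k$-algebras, so the algebra map $f^*\colon\mathcal{O}(G)\to\varinjlim_{i}\mathcal{O}(F(Z_i))$ factors as $f^*=\iota_i\circ f_i^*$ through some $\mathcal{O}(F(Z_i))$. To see that $f_i^*$ is again a Hopf map one uses that the colimit inclusions $\iota_i$ are injective and that the tensor of two injective $k$-linear maps is injective, so the identity $\Delta\circ f^*=(f^*\otimes f^*)\circ\Delta_G$ descends, after cancelling $\iota_i\otimes\iota_i$, to $\Delta\circ f_i^*=(f_i^*\otimes f_i^*)\circ\Delta_G$. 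Dually this says $f=f_i\circ\phi_i$ factors through the retraction $\phi_i\colon F(\widehat{Z}^*,*)\to F(Z_i)$, and compactness gives $\mathrm{Hom}(F(\widehat{Z}^*,*),G)\cong\varinjlim_i\mathrm{Hom}(F(Z_i),G)$.

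Finally I would assemble the pieces. Combining the finite-stage universal property of the second step with the compactness isomorphism of the third gives $\mathrm{Hom}(F(\widehat{Z}^*,*),G)\cong\varinjlim_i\mathrm{Maps}(Z_i,G(k))$, where under $i\preceq j$ the transition map is precomposition with $\phi^j_i$, i.e. extension of a map $Z_i\to G(k)$ by the value $e$ on $Z_j\setminus Z_i$; this colimit is exactly the set of finitely supported maps $Z\to G(k)$, which by the first step coincides with the continuous finite-image pointed maps $(\widehat{Z}^*,*)\to G(k)$ with $f'(*)=e$. One checks the resulting correspondence is natural (independent of the stage through which one factors) using the filteredness of $J$. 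The main obstacle is the third step: it is precisely the unipotence (algebraicity) of $G$ that makes $\mathcal{O}(G)$ finitely presented and forces every homomorphism to concentrate on a finite stage, and the only delicate point there is verifying that the algebra-level factorization respects the coalgebra structure, which is handled by the injectivity of $\iota_i\otimes\iota_i$.
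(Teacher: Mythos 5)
Your proposal is correct and takes essentially the same route as the paper's own proof: both reduce to a finite stage by using that $\mathcal{O}(G)$ is finitely generated, so that the dual map $f^o(\mathcal{O}(G))$ lands in some Hopf subalgebra $\mathcal{O}(F(Z_i^*,*))$ of the filtered colimit, and both settle the finite stage via the universal property of $F(Z_i^*,*)$ as the prounipotent completion of the discrete free group $\Phi(Z_i^*)$ (Definition \ref{d4}). The only difference is bookkeeping: where you invoke compactness of the finitely presented algebra $\mathcal{O}(G)$ and verify Hopf compatibility of the factored map by cancelling the injection $\iota_i\otimes\iota_i$, the paper obtains the group-level factorization through the retraction $F(\widehat{Z}^*,*)\rightarrow F(Z_i^*,*)$ by quoting Waterhouse's faithful-flatness results \cite[Theorems 14.1, 13.2]{Wat}.
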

\begin{proof}
Indeed, suppose we are given a homomorphism $f:F(\widehat{Z}^*,*)\rightarrow G$ of a free prounipotent group $F(\widehat{Z}^*,*)$ into a unipotent group $G$. Since $\mathcal{O}(G)$ is a finitely generated Hopf algebra it follows that the image $f^o(\mathcal{O}(G))$ lies in the Hopf subalgebra $\mathcal{O}(F(Z_i^*,*))=T(k^{Z_i})$ for some finite subset $Z_i\subseteq Z$.
$\mathcal{O}(F(Z))$ is faithfully flat over $\mathcal{O}(F(Z_i^*,*))$ \cite[Theorem 14.1]{Wat} and therefore \cite[Theorem 13.2]{Wat} $f$ is factored via an epimorphism onto a free prounipotent group over $Z_i^*$, which corresponds to a continuous mappings of pointed profinite spaces $g':(\widehat{Z}^*,*)\rightarrow (Z_i^*,*)$ and, therefore, taking the composition with the homomorphism $h':F(Z_i^*,*)\rightarrow G(k)$ we obtain the required $f'=h'g'$.

If we are given a continuous map $f':(\widehat{Z}^*,*)\rightarrow G(k)$ with a finite image then it factors through some projection $\phi_i:(\widehat{Z}^*,*)\rightarrow (Z^*_i,*)$ \cite[Lemma 1.1.16]{ZR}. This projection corresponds to the homomorphism of prounipotent groups $\phi_i:F(\widehat{Z}^*,*)\rightarrow F(Z^*_i,*)$ as above. But $F(Z^*_i,*)$ is the prounipotent completion of $\Phi(Z_i^*)$ - the free discrete group on $Z_i^*$ \cite[3]{Vez}. Now the result follows from the universal properties of the free discrete group over $Z_i^*$ and its prounipotent completion.
\end{proof}
\begin{remark}\label{r8}
It turns out that the decomposition $F(\widehat{Z}^*,*)\cong \varprojlim F(Z_i^*,*)$ is actually an inverse limit of proper closed subgroups, analogous statement is true for the pro-$\mathcal{C}$-case \cite[Corollary 3.3.10]{ZR}.
Indeed, the natural maps of pointed profinite spaces $i:(Z_i^*,*)\rightarrow (\widehat{Z}^*,*)$ and $pr_i:(\widehat{Z}^*,*)\rightarrow (Z_i^*,*)$ induce, by the universal property, the homomorphisms of free prounipotent groups $i:F(Z_i^*,*)\rightarrow F(\widehat{Z}^*,*)$, $pr_i:(\widehat{Z}^*,*)\rightarrow (Z_i^*,*)$ and since $pr_i\circ i=id_{F(Z_i^*,*)}$ then $i$ establishes an isomorphism of the group $F(Z_i^*,*)$ with a closed subgroup in $F(\widehat{Z}^*,*)$ generated by $(Z_i^*,*)$ and, therefore, $pr_i, i\in I$ are actually projections onto closed proper subgroups, which determined by the Hopf ideals of $\mathcal{O}(F(X))$ generated by $k^{X\setminus X_i}$. As a consequence, once chosen generators in $F(Z_i^*,*)(k)$ are actually generators in $F(\widehat{Z}^*,*)(k)$ as expected.
\end{remark}

Proposition \ref{p001} shows that free prounipotent groups in Definition \ref{d006} coincide with the Lubotzky-Magid's \cite[Definition 2.1]{LM}. These groups can be characterized as groups with the so called \emph{lifting property}:
$$\xymatrix{1 \ar[r] & K \ar[r] & E \ar[r]^{\pi} & G \ar[r] & 1 \\
  && F \ar[u]^{h} \ar[ur]_{\rho} \\ }$$
for any exact sequence of prounipotent groups and homomorphism $\rho:F\rightarrow G$ there is a homomorphism $h:F\rightarrow E$ such that $\pi h=\rho.$

As checked in \cite[Theorem 2.4]{LM} the lifting property characterize free pro-unipotent groups over algebraically closed fields of zero characteristics. However, the proof depends only on standard results on normal subgroups of unipotent groups, which are easy to follow from the corresponding results on nilpotent Lie algebras \cite{Se}, thus free prounipotent groups over nonalgebraically closed fields of characteristic zero are also characterized as groups with the lifting property. In fact, in the pro-$p$-case the indicated characterizations of free pro-$p$-groups were obtain back in \cite[3.4]{Se4}.

\begin{lemma}\label{l1234}
Let $F$ be a free prouniotent group, then any epimorphism $\phi:G\rightarrow F$ of prounipotent groups has a splitting, which is a homomorphism of prounipotent groups.
\end{lemma}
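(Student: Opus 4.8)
The plan is to deduce the splitting directly from the lifting property of the free prounipotent group $F$ recorded just above, applied to the identity homomorphism of $F$. First I would note that the epimorphism $\phi:G\rightarrow F$ fits into a short exact sequence of prounipotent groups
\begin{equation*}
1\rightarrow \ker\phi \rightarrow G\xrightarrow{\phi} F\rightarrow 1,
\end{equation*}
in which $\ker\phi$ is a closed normal subgroup of $G$. Indeed, in the category of prounipotent groups the kernel of a homomorphism is a closed normal subgroup, and, as recalled in Example \ref{e1.1}, the image of a homomorphism of prounipotent groups is again a closed subgroup; since $\phi$ is assumed to be an epimorphism, the displayed sequence is exact.

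Next I would invoke the lifting property for $F$. As $F$ is a free prounipotent group (either over a field of characteristic zero, or a free pro-$p$-group), it enjoys the lifting property with respect to every exact sequence of prounipotent groups, established above through \cite{LM} in characteristic zero and \cite{Se4} in the pro-$p$-case. Taking that exact sequence to be the one displayed and $\rho=\mathrm{id}_F$, the lifting property produces a homomorphism of prounipotent groups $s:F\rightarrow G$ with $\phi\circ s=\mathrm{id}_F$. This $s$ is the desired splitting of $\phi$, and by construction it is a homomorphism of prounipotent groups.

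Since the argument is nothing more than the lifting property applied to $\rho=\mathrm{id}_F$, there is essentially no genuine obstacle. The only point that needs care is the verification that $\phi$ really sits in a short exact sequence, i.e. that $\ker\phi$ is a closed normal subgroup and that the sequence is right exact; this is ensured by the good behaviour of kernels and images in the category of prounipotent groups (Example \ref{e1.1}), which holds uniformly in both the zero-characteristic and the pro-$p$ settings.
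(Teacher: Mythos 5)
Your proof is correct and is essentially the paper's own argument: the paper likewise proves the lemma by applying the lifting property of the free prounipotent group $F$ with $\rho=\mathrm{id}_F$ to the exact sequence determined by $\phi$, obtaining the section $h$ directly. Your additional verification that $\ker\phi$ is closed and the sequence is exact is a harmless elaboration of a point the paper leaves implicit.
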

\begin{proof}
Take $G=F, E=G$ and $\rho=id_F$ in the diagram of lifting property above, then $h$ is the desired splitting.
\end{proof}

We say that the prounipotent group $G$ has \emph{cohomological dimension} $n$, and write $cd(G)=n$, if for every $G$-module $A$ and every $i>n$, $H^i(G,A)=0$ and $H^n(G,V)\neq 0$ for some $G$-module $V$.
Prounipotent groups of cohomological dimension one are free prounipotent groups,
this follows from Proposition \ref{p001} and \cite[(1.18)]{LM}.
An important feature of prounipotent groups (which makes their cohomological study sometimes easier than in the discrete case) is that $cd(G)\leq n$ if and only if $H^{n+1}(G,k_a)=0$, because $k_a$ is the only simple $G$-module. This fact is easily deduced with the ``d\'{e}vissage'' argument \cite[3.1]{Se4}.

The following propositions makes possible to introduce a concept of pro-unipotent presentation in the form \eqref{eq1}.

\begin{proposition}\label{p7} Let $G$ be a prounipotent group over a base field $k$ of zero characteristics or $k=\mathbb{F}_p$ and $G$ is a pro-$p$-group. Then there is a free prounipotent group $F(X)$ over a set $X$ of cardinality less or equals a cardinality $d(G)$ of a basis of $Hom(G,k_a)$ and an epimorphism $f:F(X)\rightarrow G$.
\end{proposition}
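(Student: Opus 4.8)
The plan is to realise $G$ as a quotient of a free prounipotent group on a minimal generating system whose size is governed by the cotangent space. First I would record that, since $k_a$ carries the trivial $G$-action, $Hom(G,k_a)=H^1(G,k_a)$, and that by the duality of Remark~\ref{e5} this space is naturally paired with the cotangent space $I/I^2$ of $\mathcal{O}(G)^*$ (the indecomposables, equivalently the dual of the primitives of $\mathcal{O}(G)$). Thus $\dim_k I/I^2$ equals the cardinality $d(G)$ of a basis of $Hom(G,k_a)$. In the notation of Remark~\ref{e5}, $I/I^2$ is the cotangent space of the Frattini-type quotient $G/\overline{[G,G]}$ when $char(k)=0$, and of $G/\overline{G^p[G,G]}$ in the pro-$p$-case.

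Next I would choose a set $X$ with $|X|=d(G)$ and a family $\{g_x\}_{x\in X}\subseteq G(k)$ lifting a basis of this Frattini quotient. Arranging the $g_x$ into a \emph{convergent} system, so that in each unipotent quotient of $G=\varprojlim G_\beta$ all but finitely many of them are trivial, one obtains a continuous pointed map of the profinite space $(\widehat{X}^*,*)$ of Remark~\ref{r5} into $G(k)$; such a convergent basis exists because each Frattini quotient $G_\beta/\overline{[G_\beta,G_\beta]}$ (resp.\ $G_\beta/\overline{G_\beta^p[G_\beta,G_\beta]}$) is finite-dimensional, exactly as in the pro-$p$ theory. By Proposition~\ref{p001} applied to each $G_\beta$ and passing to the limit over $\beta$, this data assembles into a homomorphism of prounipotent groups $f\colon F(X)\to G$ with $f(x)=g_x$.

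The remaining, and principal, point is the surjectivity of $f$, which I would establish in the equivalent form $\overline{Im\,f}=G$ by a Frattini/separation argument reducing everything to the unipotent quotients. Write $f^{*}\colon Hom(G,k_a)\to Hom(F(X),k_a)$ for the induced map; by construction $\chi\mapsto(\chi(g_x))_{x\in X}$ is injective, since the $g_x$ map onto a basis of the cotangent space, so $f^{*}$ is injective. Suppose $H:=\overline{Im\,f}$ were a proper closed subgroup (its image is closed by \cite[Theorem~15.3]{Wat}). Writing $G=\varprojlim G_\beta$ and $H=\varprojlim H_\beta$ with $H_\beta$ the closure of the image of $H$ in $G_\beta$, there is an index $\beta$ with $H_\beta\subsetneq G_\beta$. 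Since $G_\beta$ is unipotent, hence nilpotent, the Frattini argument for nilpotent (resp.\ pro-$p$) groups forces the image of $H_\beta$ in the vector group $G_\beta^{ab}$ (resp.\ in the elementary abelian Frattini quotient) to be proper, hence contained in the kernel of some nonzero functional $\chi_0\colon G_\beta^{ab}\to k_a$. Pulling $\chi_0$ back along $G\to G_\beta$ yields a nonzero $\chi\in Hom(G,k_a)$ with $H\subseteq Ker\,\chi$, whence $f^{*}\chi=\chi\circ f=0$, contradicting the injectivity of $f^{*}$. Therefore $H=G$ and $f$ is an epimorphism.

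I expect the hard part to be this last step, and in particular the input that a proper closed subgroup of a unipotent group is separated from the whole group by an additive character. This is where the nilpotency of unipotent groups, equivalently (via Example~\ref{e1.1}) of the associated pronilpotent Lie algebra, is essential: it is exactly the fact that $\overline{[G,G]}$ (resp.\ $\overline{G^p[G,G]}$) lies in the Frattini subgroup that upgrades the surjectivity on the abelianized cotangent level, which holds by construction, into the surjectivity of $f$ itself. The secondary technical nuisance, settled as in the pro-$p$ case, is to guarantee that the chosen generators converge to the identity so that $f$ is genuinely a continuous homomorphism in the sense of Remark~\ref{r5}.
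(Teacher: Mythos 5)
Your proposal is correct, but it takes a different route from the one the paper gestures at. The paper's proof of Proposition \ref{p7} is a one-line reduction: it invokes the lifting property of free prounipotent groups together with Lemma \ref{l1234} and defers the details to \cite[Proposition 2.8]{LM} (i.e.\ one surjects $F(X)$ onto the abelianization/Frattini quotient of $G$ and lifts that map along $G\twoheadrightarrow G/\overline{[G,G]}$ using the lifting property, the splitting lemma handling the free targets). You instead bypass the lifting property entirely: you build $f$ directly from a convergent lift of a basis of the Frattini quotient via the universal property of $F(X)$ with respect to convergent maps (Proposition \ref{p001} and Remark \ref{r4}), and you prove surjectivity by hand with the $H^1$-separation argument — a proper closed subgroup (closed image by \cite[Theorem 15.3]{Wat}, as recalled in Example \ref{e1.1}) must be proper in some unipotent quotient $G_\beta$, hence killed by a nonzero additive character since $[G_\beta,G_\beta]$ (resp.\ $G_\beta^p[G_\beta,G_\beta]$) is Frattini in the nilpotent group $G_\beta$, contradicting injectivity of $f^*$ on $Hom(-,k_a)$. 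This is precisely the argument the paper itself deploys elsewhere, in Step 3 of Section \ref{s2.1}, to show that $\tau(\widehat{Y}^*)$ normally generates $R(k)$ (there phrased via $Ker(Res)^F=0$ and Lemma \ref{l2}), so your route has the virtue of being self-contained and consistent with the paper's own toolkit, at the cost of redoing what \cite{LM} and the pro-$p$ template of \cite{Se4} already package; the lifting-property route buys a shorter argument once Lemma \ref{l1234} is available. Two points you should make explicit to be airtight: the homomorphisms $f_\beta:F(X)\to G_\beta$ obtained from Proposition \ref{p001} are compatible over $\beta$ because two homomorphisms out of $F(X)$ agreeing on the Zariski-generating profinite space $(\widehat{X}^*,*)$ coincide; and in the separation step the existence of the functional $\chi_0$ uses that in characteristic zero closed subgroups of the vector group $G_\beta^{ab}$ are linear subspaces, while in positive characteristic one is saved only because the paper restricts to constant finite $p$-group schemes, where the elementary abelian Frattini quotient argument applies — exactly as you note.
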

\begin{proof} The proof uses the lifting property of free prounipotent groups and Lemma \ref{l1234} (see details \cite[Proposition 2.8]{LM} which are similar to the pro-$p$-case).
\end{proof}
Cohomology and ``proper'' presentation theory of prounipotent groups in zero characteristics closely parallels that of the cohomology of pro-$p$-groups \cite{Se4}: the free prounipotent groups turn out to be those of cohomological dimension ones, the dimension of the first and second cohomology groups give cardinalities of generators and defining relations \cite{LM}, \cite{LM2}.

\begin{definition}\label{d11}
We shall say that a map $\mu: (\widehat{X}^*,*)\rightarrow G(k)$ of a pointed profinite space $(\widehat{X}^*,*)$ into the group of rational points of a prounipotent group $G$ is \textbf{convergent} if there is a decomposition $G(k)\cong \varprojlim G_{\alpha}(k)$ such that for all but finite $x\in \widehat{X}^*$ we have that $\phi_{\alpha}\mu(x)=1$, where $\phi_{\alpha}:G\rightarrow G_{\alpha}$ is the projection.
\end{definition}
\begin{remark}\label{r4}
By construction, free prounipotent groups have the universal property with respect to convergent maps of generators into rational points of prounipotent groups. Let $f:F(\widehat{X}^*,*)(k)\rightarrow G(k)$ be a homomorphism from a free prounipotent group over a pointed profinite space $(\widehat{X}^*,*)$ onto $k$-points of the prounipotent group $G$, then, according to the arguments of the proof of Proposition \ref{p001}, the map $(\widehat{X}^*,1)\rightarrow G(k)$ is convergent.
\end{remark}
\subsection{Simplicial presentations of prounipotent groups}\label{s2.1}
In \cite{Mikh2016}, \cite{Mikh2017} the notion of a prounipotent presentation of finite type was extended to prounipotent groups over non-algebraically closed fields of characteristic zero. Their simplicial forms also appeared in \cite{Mikh2016}, \cite{Mikh2017} as free simplicial prounipotent groups of finite type, degenerate in dimensions greater than one. In this section we define the notion of a simplicial prounipotent presentation over punctured profinite spaces.

Our exposition is completely analogous in zero characteristics and in the pro-$p$-case, so we restrict ourselves by the first case. The only difference is that we need to replace $[R,R]$ with $R^p[R,R]$ and $[R,F]$ with $R^p[R,F]$ to work with $\mathbb{F}_p$-vector spaces in the pro-$p$-case.

\begin{remark}\label{r9}
By Proposition \ref{p7}, any prounipotent group $G$ can be presented as a quotient of a free pro-unipotent group $G\cong F(X)/R=F(\widehat{X}^*,*)/R$ for some set $X$ of cardinality less than or equal to the cardinality of the basis in $Hom(G,k_a)$.
If the inflation map $H^1(G,k_a)\rightarrow H^1(F,k_a)$ is an isomorphism, then we say that such a prounipotent presentation is proper.
In what follows, we will not restrict ourselves only to proper representations, but will consider only those sets of ``defining relations'' $Y$ for which there is a fixed isomorphism $(R/[R,F](k))^{\vee}\cong k^Y$ or dually  $R/[R,F]\cong (k^Y)^*\cong k^{(\widehat{Y}^*,*)}$, we will explain this condition in Remark \ref{r10} below.
\end{remark}
We will call $1\rightarrow R\rightarrow F(\widehat{X}^*,*)\rightarrow G\rightarrow 1$ a prounipotent presentation $(X|R)$ in the form \eqref{eq1} and construct its simplicial form into three steps:

Step 1. Choose a basis, say $Y$, in $R/[R,F](k)^{\vee}$, then $R/[R,F](k)\cong (k^Y)^*\cong k^{(\widehat{Y}^*,*)}$, by duality, and $\tau_F:(\widehat{Y}^*,*)\rightarrow (\widehat{Y}^*,0)\subset R/[R,F](k)$ is by definition a homeomorphism of punctured profinite spaces, which sends $y\mapsto y, y\in \widehat{Y}^*$ and $*\mapsto 0$.

Step 2. Let $\psi: R/[R,R](k)\rightarrow R/[R,F](k)$ be a projection of linearly-compact topological vector spaces induced by the inclusion $[R,R]\subset [R,F]$, then the dual inclusion map $\psi^{\vee}: R/[R,F](k)^{\vee}\rightarrow R/[R,R](k)^{\vee}$ allows us to consider the basis $Y\subset R/[R,F](k)^{\vee}$ as a subset of some basis $T$ of $R/[R,R](k)^{\vee}$, i.e. $Y\subset T$. We denote
$\sigma:R/[R,R](k)^{\vee}\rightarrow R/[R,F](k)^{\vee}$ - the projection onto the subspace, which is given on the basis by the rule $\sigma(x)=x, x\in Y$ and $\sigma(x)=0$, $x\notin Y$ and, by duality, we obtain the closed embedding $\sigma^*:R/[R,F](k)\rightarrow R/[R,R](k)$ which is a continuous splitting of $\psi$ and, in particular, $(\widehat{T}^*,0)=(\widehat{Y}^*,0)\cup (\widehat{T\setminus Y }^*,0)$.

Step 3. Let $T$ be a basis of $R/[R,R](k)^{\vee}$, by Remark \ref{r5} there exists a lifting homeomorphism of pointed profinite spaces $\eta: (\widehat{T}^*,0)\rightarrow R(k)$ and the image $\eta(\widehat{T}^*,0)$ Zariski generates $R(k)$. We define a pointed profinite space of ``defining retations'' as the image of
\begin{equation}\label{eq9}
\tau=\eta\circ\sigma^*\circ\tau_F: (\widehat{Y}^*,*)\rightarrow R(k).
\end{equation}
We claim that $\tau(\widehat{Y}^*)$ Zariski generates $R(k)$ as a normal subgroup of $F(\widehat{X}^*,*)$. The proof of this statement is quite standard \cite[Theorem 3.11]{LM}, \cite[Proposition 7.8.2]{ZR} and we recall it.
We have to prove that $N(k)=\langle\widehat{Y}^*\rangle_F$, i.e. the Zariski normal closure of $\widehat{Y}^*$ coincides with $R(k)$.
Indeed, the inclusion $N\subseteq R$ gives the restriction homomorphism $Res:H^1(R,k_a)\rightarrow H^1(N,k_a)$, which induces (by taking $F$-fixed points) $Hom_F(R,k_a)=H^1(R,k_a)^F\xrightarrow{Res^F} H^1(N,k_a)^F$ a monomorphism (by construction of $N$, see also \cite[Proposition 26]{Se4}), i.e. $Ker(Res)^F=0$ and, therefore, by Lemma \ref{l2} $Ker(Res)=0.$
Now suppose that $N\neq R$, then there is a non trivial homomorhism $\xi:R/N\rightarrow k_a$ and hence $Ker(Res)^F\neq 0$, the contradiction, and therefore $N=R$.

\begin{definition}\label{d7}
Let us say that we are given a simplicial presentation of a prounipotent group $G\cong F(\widehat{X},*)/R$ if
there exists a punctured profinite space $(\widehat{Y}^*,*)$ associated with the chosen bases $Y$ of $R/[R,F](k)^{\vee}$ such that $G$ is included into the following simplicial diagram of prounipotent groups
\begin{equation}
\xymatrix{
{F(\widehat{X}^*\cup \widehat{Y}^*,*)} \ar@<0ex>[r]^{d_0}\ar@<-2ex>[r]^{d_1} & F(\widehat{X}^*,*) \ar@<-2ex>[l]_{s_0} \ar[r]^{\pi} & G}\label{5}
\end{equation}
and for $x \in (\widehat{X}^*,*)$, $y \in (\widehat{Y}^*,*)$ the following identities hold $d_0(x)=x,  d_0(y)=1, d_1(x)=x,  d_1(y)=\tau(y), s_0(x)=x$,
where $\tau$ is a homeomorphism \eqref{eq9} of pointed profinite spaces, identifying $\widehat{Y}^*$ with the pointed profinite space of normal generators in $R(k)$.
\end{definition}

\begin{remark}\label{r7}
As in Section \ref{r1.2}, we can consider a simplicial prounipotent presentation as the second step in the building of the Eilenberg-MacLane complex, we denote it by $F^{(1)}_{\bullet}$, and define the second homotopy group of a pro-unipotent presentation as $\pi_1(F^{(1)}_{\bullet})$
\end{remark}

\begin{proposition}\label{p6}
Let we are given $(X|R) $ a prounipotent presentation \eqref{eq1} of a pro-unipotent group $G$. Then there is a pointed profinite space $(\widehat{Y}^*,*)$ and a convergent map $\tau:(\widehat{Y}^*,*)\rightarrow R(k), \tau(*)=1$, which identify $(\widehat{Y}^*,*)$ with the pointed profinite subspace of normal generators of $R(k)$.

If we assume $d_0(x)=x,  d_0(y)=1, d_1(x)=x, s_0(x)=x,d_1(y)=\tau(y), y\in (\widehat{Y}^*,*)$, then \eqref{5} is a simplicial presentation of $G$, in particular $G\cong F(\widehat{X}^*,*)/d_1(Ker\; d_0)$.
\end{proposition}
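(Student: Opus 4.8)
The plan is to read off the pointed profinite space $(\widehat{Y}^*,*)$ and the map $\tau$ directly from the three-step construction that precedes Definition \ref{d7}, and then to check that the truncated diagram \eqref{5} equipped with the prescribed faces and degeneracy is a genuine simplicial presentation, the decisive point being the computation $d_1(Ker\;d_0)=R$. For the existence part I would invoke Steps 1--3 verbatim: choose a basis $Y$ of $(R/[R,F](k))^{\vee}$, producing the homeomorphism $\tau_F\colon(\widehat{Y}^*,*)\to(\widehat{Y}^*,0)\subset R/[R,F](k)$; use the splitting $\sigma^*$ of Step 2 to embed $R/[R,F](k)$ into $R/[R,R](k)$; and lift along $\eta$ of Step 3 to set $\tau=\eta\circ\sigma^*\circ\tau_F$ as in \eqref{eq9}. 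The assertion that $\tau(\widehat{Y}^*)$ Zariski-normally generates $R(k)$ is exactly what was already proved there via the restriction map on $F$-fixed cohomology together with Lemma \ref{l2}. Convergence of $\tau$ in the sense of Definition \ref{d11} follows because $\tau_F$, $\sigma^*$ and $\eta$ are continuous maps of pointed profinite spaces and hence factor compatibly through finite levels; this is precisely the hypothesis needed to apply the universal property of free prounipotent groups (Remark \ref{r4}) in the sequel.

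Next I would define $d_0,d_1,s_0$ as homomorphisms of prounipotent groups by prescribing them on the profinite generating spaces and extending by that universal property. Writing $F(\widehat{X}^*\cup\widehat{Y}^*,*)\cong F(\widehat{X}^*,*)\star F(\widehat{Y}^*,*)$ by Lemma \ref{l7}, the convergent assignment $x\mapsto x,\ y\mapsto *$ yields $d_0$; the inclusion of generating spaces yields $s_0$; and $x\mapsto x,\ y\mapsto\tau(y)$ yields $d_1$, where convergence of the $\widehat{Y}^*$-component is exactly the convergence of $\tau$ established above. The simplicial identities $d_0 s_0=d_1 s_0=\mathrm{id}$ and the compatibility $\pi d_0=\pi d_1$ are then verified on generators (for instance $\pi d_1(y)=\pi(\tau(y))=1$ since $\tau(y)\in R=Ker\;\pi$) and propagate by homomorphy and Zariski density, so that the diagram does descend to $G$.

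The main obstacle, and the heart of the argument, is the identification $G\cong F(\widehat{X}^*,*)/d_1(Ker\;d_0)$. Since $d_0$ is the split epimorphism with section $s_0$ killing exactly $\widehat{Y}^*$, its kernel is the Zariski normal closure $Ker\;d_0=\langle\widehat{Y}^*\rangle_{F(\widehat{X}^*\cup\widehat{Y}^*,*)}$; in the coproduct picture this is simply the normal closure of the second free factor, whose quotient returns the first factor. Because $d_1$ is a surjective homomorphism of prounipotent groups (it restricts to the identity on $\widehat{X}^*$, which Zariski-generates $F(\widehat{X}^*,*)$), and because images of closed subgroups under homomorphisms of prounipotent groups are again closed (the functorial behaviour recalled in Example \ref{e1.1}), the image $d_1(Ker\;d_0)$ is the Zariski normal closure of $d_1(\widehat{Y}^*)=\tau(\widehat{Y}^*)$ in $F(\widehat{X}^*,*)$. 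By the first part this normal closure is $R$, whence $G\cong F(\widehat{X}^*,*)/R=F(\widehat{X}^*,*)/d_1(Ker\;d_0)$. The delicate step to treat carefully is that pushing a normal closure through $d_1$ commutes with taking the Zariski normal closure of the image; this is precisely where surjectivity of $d_1$ and the closedness of images of closed subgroups are indispensable, and where I expect the bulk of the technical verification to lie.
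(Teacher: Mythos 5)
Your proposal is correct and takes essentially the same route as the paper: its proof likewise takes $\tau$ to be (the restriction of) the lifting \eqref{eq9} assembled in Steps 1--3, cites Remark \ref{r4} for convergence, and obtains $d_0, d_1, s_0$ on generators from the coproduct decomposition of Lemma \ref{l7}. The only difference is that you explicitly verify the identification $d_1(Ker\;d_0)=R$ (using that $Ker\;d_0$ is the Zariski normal closure of $\widehat{Y}^*$, that images of closed subgroups under homomorphisms of prounipotent groups are closed, and that surjections carry Zariski normal closures to Zariski normal closures), a step the paper leaves implicit in the normal-generation claim proved after Step 3; your verification of it is sound.
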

\begin{proof}

Let $\tau:(\widehat{Y}^*,*)\rightarrow R(k)\vartriangleleft F(\widehat{X}^*,*)(k)$ is obtained as a restriction to $(\widehat{Y}^*)$ of some lifting of the basis $(\widehat{T}^*,0)$ of $R/[R,R](k)$ along the abelianization map $\mu:R(k)\rightarrow R/[R,R](k)$, it is convergent as mentioned in Remark \ref{r4}.

Define $d_0(x)=x, d_1(x)=x, s_0(x)=x$ for $x\in \widehat{X}^*$ and $d_0(y)=1, d_1(y)=\tau(y)$ for $y\in \widehat{Y}^*$. The existence of such $d_i$ follows from Lemma \ref{l7}, since $F(\widehat{X}^*\cup \widehat{Y}^*,*)$ is the direct sum of $F(\widehat{X}^*,*)$ and $F(\widehat{Y}^*,*)$.
\end{proof}



\subsection{Subpresentations}\label{s2.2}
We now introduce a notion of a subpresentation of a prounipotent simplicial presentation \eqref{5}. To do this, we start with a basis $Y$ which is obtained from the isomorphism of $k$-vector spaces $(R/[R,F](k))^{\vee}\cong k^Y$. A subpresentation is constructed from a subset $Y_1\subset Y$, for this we consider the corresponding embedding $(\widehat{Y_1}^*,*)\hookrightarrow (\widehat{Y}^*,*)$ and hence we define
$$\tau_1=\tau|_{(\widehat{Y_1}^*,*)}:(\widehat{Y_1}^*,*)\rightarrow F(\widehat{X}^*,*)$$
as the restriction of $\tau$ in Definition \ref{d7}. We define $R_0(k)$ as the Zariski normal closure of the profinite subspace $\tau_1(\widehat{Y}_1^*)$ in $F(k)$.
Consider the following diagram:
\begin{equation}\label{7}
\xymatrix{
    R/[R,R](k)\cong k^{(\widehat{Z}^*,*)}  \ar[d]^{\psi} & R_0/[R_0,R_0](k)\cong k^{(\widehat{T_1}^*,*)} \ar[l]_{\phi} \ar[d]^{\psi_0}\\
R/[R,F](k)\cong k^{(\widehat{Y}^*,*)}\ar@<-1ex>[d]_{\vee} \ar@<0ex>[r]^{\xi}  & R_0/[R_0,F](k)\cong k^{(\widehat{Y}_1^*,*)}\ar@<-2ex>[l]_{\chi} \ar@<-1ex>[d]_{\vee}\\
(R/[R,F](k))^{\vee} \cong k^Y \ar@<0ex>[r]^{\chi^{\vee}} \ar@<-1ex>[u]_{*} & k^{Y_1} \ar@<-2ex>[l]_{\xi^{\vee}}\ar@<-1ex>[u]_{*} }
\end{equation}
where $\psi$ and $\psi_0$ are induced by inclusions of the corresponding subgroups $[R,R]\subset[R,F]$ and $[R_0,R_0]\subset[R_0,F]$. $\phi$ is induced by inclusion $R_0\hookrightarrow R$ and $\chi$ is induced from $\phi$ by inclusion $[R_0,F]\subset[R,F]$ and as is shown in Proposition \ref{sub} $\xi$ is actually a projection on a closed subspace.

Taking $d'_0, d'_1$ as restrictions of $d_0, d_1$ to $(\widehat{Y_1}^*,*)$ in \eqref{5} we get the diagram, which is called \textbf{the subpresentation of the simplicial presentation} \eqref{5}.
\begin{equation}
\xymatrix{ F(\widehat{X}^*\cup \widehat{Y_1}^*,*) \ar@<0ex>[r]\ar@<-1ex>[r]_(0.6){d'_0, d'_1} & F(\widehat{X}^*,*) \ar@<-1ex>[l]_(0.4){s_0} \ar[r] & G_0}\label{6}.
\end{equation}

\begin{proposition}\label{sub}
Let we a given a prounipotent presentation \eqref{5}, then \eqref{6} is a simplicial presentation.
\end{proposition}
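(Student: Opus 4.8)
The plan is to check the three conditions of Definition~\ref{d7} for the datum \eqref{6}: that the restricted face maps form a simplicial diagram, that $\tau_1$ identifies $(\widehat{Y_1}^*,*)$ with a pointed profinite space of normal generators of $R_0(k)$, and -- the only nonformal point -- that $Y_1$ is a basis of $(R_0/[R_0,F](k))^{\vee}$, equivalently that $R_0/[R_0,F](k)\cong k^{(\widehat{Y_1}^*,*)}$ with the classes of $\tau_1(y_1)$, $y_1\in Y_1$, as topological basis. The first condition is immediate: by Lemma~\ref{l7} the group $F(\widehat{X}^*\cup\widehat{Y_1}^*,*)$ is a free factor of $F(\widehat{X}^*\cup\widehat{Y}^*,*)$, the maps $d_0',d_1'$ are the restrictions of $d_0,d_1$ and $s_0$ is unchanged, so the simplicial identities of \eqref{5} restrict verbatim. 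The normal-generation condition holds by construction, since $R_0$ was defined as the Zariski normal closure of $\tau_1(\widehat{Y_1}^*)$, and convergence of $\tau_1$ is inherited from that of $\tau$ (Remark~\ref{r4}).

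For the key point I would identify $R_0/[R_0,F](k)$ with the $G_0$-coinvariants $(R_0/[R_0,R_0](k))_{G_0}$ of the relation module, where $G_0=F/R_0$; this holds because the closed submodule generated by the $m(g-1)$ is exactly $[R_0,F]/[R_0,R_0]$, so the coinvariants carry the trivial $G_0$-action. Because $R_0$ is topologically normally generated by $\tau_1(\widehat{Y_1}^*)$, the relation module $R_0/[R_0,R_0](k)$ is topologically generated as a topological $G_0$-module by the classes of the $\tau_1(y_1)$; passing to coinvariants, $R_0/[R_0,F](k)$ is topologically generated by $\{[\tau_1(y_1)]:y_1\in Y_1\}$.

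The harder half is linear independence, and here the inclusion-induced map $\chi\colon R_0/[R_0,F](k)\to R/[R,F](k)$ of \eqref{7} does the work. From the construction \eqref{eq9} one computes that the image of $\tau(y)$ in $R/[R,F](k)$ is exactly the basis vector $y$ under $R/[R,F](k)\cong k^{(\widehat{Y}^*,*)}$ (indeed $\psi\sigma^*=\mathrm{id}$ and $\eta$ lifts along the abelianization $\mu$); since $\tau_1=\tau|_{(\widehat{Y_1}^*,*)}$ and $\chi$ is induced by $R_0\hookrightarrow R$, this gives $\chi([\tau_1(y_1)])=y_1$. As $Y_1\subseteq Y$ and $\{y:y\in Y\}$ is a basis, the family $\{y_1:y_1\in Y_1\}$ is independent, so its coefficient functionals separate points; hence a convergent combination $\sum c_{y_1}[\tau_1(y_1)]$ that vanishes maps under $\chi$ to $\sum c_{y_1}y_1=0$, forcing every $c_{y_1}=0$. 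Thus $\{[\tau_1(y_1)]\}$ is a topological basis, $R_0/[R_0,F](k)\cong k^{(\widehat{Y_1}^*,*)}$, and $Y_1$ is a basis of the continuous dual, which is exactly what Definition~\ref{d7} asks of \eqref{6}.

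Finally I would record the assertion stated after \eqref{7}: the same computation shows $\chi$ is injective with image the closed subspace spanned by $\{y_1:y_1\in Y_1\}$, and the coordinate projection $\xi\colon k^{(\widehat{Y}^*,*)}\to k^{(\widehat{Y_1}^*,*)}$ that kills $Y\setminus Y_1$ is a continuous retraction with $\xi\circ\chi=\mathrm{id}$, so \eqref{7} commutes. The main obstacle, and the step demanding the most care, is the bookkeeping in the linearly-compact category: one must justify that Zariski normal generation of $R_0$ yields genuine topological module generators of $R_0/[R_0,R_0](k)$ -- this is where the reduction to finite-dimensional layers of Lemma~\ref{l2} is invoked -- and that an injective continuous map of linearly-compact spaces such as $\chi$ is automatically a closed embedding, so that $\xi$ is well defined and continuous.
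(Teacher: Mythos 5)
Your proposal is correct and follows essentially the same route as the paper: the whole content is the verification $(R_0/[R_0,F](k))^{\vee}\cong k^{Y_1}$, obtained by combining surjectivity (Zariski normal generation of $R_0$ by $\tau_1(\widehat{Y_1}^*)$ passes, modulo $[R_0,F]$, to topological generation of the linearly-compact space) with injectivity via the inclusion-induced map $\chi$ into $R/[R,F](k)\cong k^{(\widehat{Y}^*,*)}$, using $\psi\sigma^*=\mathrm{id}$ and the lifting $\eta$ from \eqref{eq9} to see that the classes of $\tau_1(y_1)$ hit the basis vectors $y_1$. The only difference is organizational — the paper routes the same computation through the auxiliary closed subspace $\Upsilon\subset R_0/[R_0,R_0](k)$ and the maps $\phi$, $\psi_0$ of diagram \eqref{7}, whereas you work directly in $R_0/[R_0,F](k)$ — which changes nothing of substance.
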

\begin{proof}
We only need to check that $(R_0/[R_0,F](k))^{\vee}\cong k^{Y_1}$.
Let $\Upsilon$ be the closed subspace of $R_0/[R_0,R_0](k)$ generated by the image of $\tau_1(Y_1^*)\subset R_0(k)$ under the projection $R_0(k)\rightarrow R_0/[R_0,R_0](k)$. Since $\tau_1(Y_1^*)$ generates $R_0(k)$ as a Zariski normal subgroup of $F(k)$, we see that $\psi_0(\Upsilon)$ generates $R_0/[R_0,F](k)$ as a linearly-compact vector space and, therefore, $\psi_0:\Upsilon\rightarrow R_0/[R_0,F](k)$ is an epimorphism. By construction, $\phi(\Upsilon)\cong k^{(\widehat{Y}_1^*,*)}$ and, therefore, $\Upsilon\cong k^{(\widehat{Y}_1^*,*)}$, so we only need to show that $\psi_0:\Upsilon \rightarrow R_0/[R_0,F](k)$ is a monomorphism. Consider the composition $\chi\circ\psi_0: \Upsilon\rightarrow R/[R,F](k)\cong k^{(\widehat{Y}_1^*,*)}\oplus k^{(\widehat{Y\setminus Y_1}^*,*)}$, this is a homeomorphism of linearly-compact vector spaces, mapping the basis $(\widehat{Y}_1^*,*)$ of
$\Upsilon\cong k^{(\widehat{Y}_1^*,*)}$ to the basis $(\widehat{Y}_1^*,*)$ of the closed subspace $k^{(\widehat{Y}_1^*,*)}\subset R/[R,F](k)$ and hence $\chi\circ\psi_0: \Upsilon\rightarrow R/[R,F](k)$ is a monomorphism and therefore $\psi_0$ is also mono, so, by duality, $(R_0/[R_0,F](k))^{\vee}\cong k^{Y_1}$.
\end{proof}

\subsection{Prounipotent (pre-)crossed modules}\label{s3.3}
Pro-$p$-crossed modules were extensively studied by T. Porter in \cite{Por} and pro-unipotent crossed modules were introduced and studied in the case of finite type presentations in \cite{Mikh2016}. The purpose of this section is to extend the previous concepts to prounipotent crossed modules over punctured profinite spaces. We do not divide the presentation into zero characteristics and the pro-$p$-case, since they are completely analogous.

A right action of a prounipotent group $G_1$ on a prounipotent group $G_2$ is a natural transformation of the group-valued functors $G_2\times G_1\rightarrow G_2$ \cite[2.6]{Jan}. It follows from Yoneda's Lemma that $\mathcal{O}(G_2)$ inherits the left $\mathcal{O}(G_1)$-comodule structure.
We always assume, that such an action is \textbf{prounipotent}, i.e. there is a system of $G_1$-invariant normal subgroups of finite codimension $\{N_{\lambda}\}$ of $G_2$ and $G_2\cong \varprojlim G_2/N_{\lambda}$, where $G_2/N_{\lambda}$ are unipotent groups.
Our definition is equivalent to the definition of the continuous action \cite[1.3]{Por} in the pro-$p$-case and
it follows, that in the pro-$p$-case the corresponding semidirect product $G_2\leftthreetimes G_1$ is a pro-$p$-group.
\begin{lemma}\label{l6}
Let we are given a prounipotent action of a prounipotent group $G_1$ on a prounipotent group $G_2$, then the corresponding semidirect product $G_2\leftthreetimes G_1$ is the prounipotent group.
\end{lemma}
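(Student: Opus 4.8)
The plan is to realise $G_2\leftthreetimes G_1$ as an affine group scheme every nonzero representation of which has a nonzero fixed vector, and then to read off prounipotence from its algebraic quotients. First I would record that the action equips $\mathcal{O}(G_2)$ with an $\mathcal{O}(G_1)$-comodule structure, which twists the comultiplication of $\mathcal{O}(G_2)\otimes\mathcal{O}(G_1)$ into a commutative Hopf algebra; thus $G_2\leftthreetimes G_1$ is genuinely an affine group scheme, and it sits in an exact sequence of prounipotent groups
$$1\rightarrow G_2\rightarrow G_2\leftthreetimes G_1\xrightarrow{\ \pi\ } G_1\rightarrow 1,$$
in which $G_2\cong\ker\pi$ is a closed normal subgroup and the quotient is $G_1$. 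In the pro-$p$-case this reduces to the elementary fact that an extension of finite $p$-groups by finite $p$-groups is again a (pro-)$p$-group, so I would run the argument uniformly for both characteristics.

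The key step is the fixed-vector property for $E:=G_2\leftthreetimes G_1$, and here Lemma \ref{l2} does almost all of the work. Let $V\neq 0$ be an $E$-module. Restricting along $G_2\hookrightarrow E$ and applying Lemma \ref{l2} to the prounipotent group $G_2$ gives $V^{G_2}\neq 0$. Since $G_2$ is normal in $E$, the subspace $V^{G_2}$ is an $E$-subrepresentation on which $G_2$ acts trivially, so the $E$-action on it factors through $E/G_2\cong G_1$; applying Lemma \ref{l2} once more, now to the prounipotent group $G_1$ and the nonzero module $V^{G_2}$, yields $(V^{G_2})^{G_1}\neq 0$. As $(V^{G_2})^{G_1}=V^{E}$, the module $V$ carries a nonzero $E$-fixed vector. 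Note that this uses only that $G_1$ and $G_2$ are prounipotent; the prounipotence of the action itself is what would additionally let one write down an explicit cofinal inverse system $(G_2/N_\lambda)\leftthreetimes G_{1,\alpha}$ of unipotent quotients, should an explicit construction be preferred.

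It remains to convert the fixed-vector property into a prounipotent decomposition. Writing $\mathcal{O}(E)=\varinjlim A_\beta$ as the directed union of its finitely generated Hopf subalgebras \cite[Theorem 3.3]{Wat} gives $E\cong\varprojlim H_\beta$ with each $H_\beta=\mathrm{Spec}\,A_\beta$ an algebraic quotient of $E$. Every $H_\beta$-module is by inflation an $E$-module, and a vector is $E$-fixed exactly when it is $H_\beta$-fixed, since its coaction already lands in $V\otimes A_\beta$. Hence each algebraic $H_\beta$ inherits the fixed-vector property and is therefore unipotent by Definition \ref{d5}. Thus $E=\varprojlim H_\beta$ is an inverse limit of unipotent groups, i.e. prounipotent, which is what we want.

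The step I expect to demand the most care is the scheme-theoretic justification that $V^{G_2}$ is an honest \emph{rational} $G_1$-representation, i.e. that taking fixed points of the closed normal subgroup $G_2$ produces a genuine subcomodule of $V$ on which the quotient $E/G_2\cong G_1$ coacts, together with the well-definedness of the twisted Hopf structure on $\mathcal{O}(G_2)\otimes\mathcal{O}(G_1)$ and the exactness of the displayed sequence at the level of affine group schemes. Once these standard facts about affine group schemes are in place, the remainder is a formal consequence of Lemma \ref{l2} and the fixed-point characterisation of unipotence in Definition \ref{d5}.
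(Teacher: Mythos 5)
Your proof is correct, but it takes a genuinely different route from the paper. The paper argues structurally: for unipotent $G_2$ it observes that the coaction $\mu:\mathcal{O}(G_2)\rightarrow\mathcal{O}(G_1)\otimes\mathcal{O}(G_2)$ lands in $A\otimes\mathcal{O}(G_2)$ for a finitely generated Hopf subalgebra $A\subseteq\mathcal{O}(G_1)$ \cite[Theorem 3.3]{Wat}, so by faithful flatness \cite[Theorem 13.2]{Wat} the action factors through a unipotent quotient; it then certifies that a semidirect product of unipotent groups is unipotent by exhibiting a finitely generated torsion-free nilpotent discrete dense subgroup (Remark \ref{r6}) --- a characteristic-zero, Lie-theoretic argument --- and finally passes to the inverse limit over the $G_1$-invariant system $\{N_\lambda\}$ supplied by the prounipotence of the action. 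You instead exploit the fixed-vector characterization in Definition \ref{d5}: extension-stability of the fixed-vector property via two applications of Lemma \ref{l2} (using that $V^{G_2}$ is an $E$-submodule on which the action factors through $E/G_2\cong G_1$, and $(V^{G_2})^{G_1}=V^E$), followed by the reduction $\mathcal{O}(E)=\varinjlim A_\beta$, $E\cong\varprojlim H_\beta$ to algebraic quotients, where inflation preserves fixed points because $V\otimes A_\beta\hookrightarrow V\otimes\mathcal{O}(E)$ is injective. Your route is shorter, avoids Remark \ref{r6} entirely, and yields the genuinely sharper observation that in characteristic zero the prounipotence of the action is not needed --- any schematic action suffices --- whereas the paper's route produces an explicit cofinal system of unipotent semidirect products, which it reuses later (e.g.\ in the proof of Proposition \ref{p2}). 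One caveat: your step ``$H_\beta$ has the fixed-vector property, hence is unipotent by Definition \ref{d5}'' is valid only in characteristic zero, since for $k=\mathbb{F}_p$ that definition restricts unipotent groups to constant schemes of finite $p$-groups and the fixed-vector property alone does not certify constancy (e.g.\ $\alpha_p$); your separate reduction of the pro-$p$ case to the elementary fact that extensions of finite $p$-groups are $p$-groups is therefore not optional but essential there, and it matches how the paper itself dispatches the pro-$p$ case in the remark preceding the lemma.
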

\begin{proof} Now suppose $G_2$ is unipotent.
Let $\mu:\mathcal{O}(G_2)\rightarrow\mathcal{O}(G_1)\otimes\mathcal{O}(G_2)$ defines the left $\mathcal{O}(G_1)$-comodule structure on $\mathcal{O}(G_2)$. Since $\mu(\mathcal{O}(G_2))$ is finitely generated as an algebra and any finite set in a Hopf algebra is contained in a finitely generated Hopf subalgebra \cite[Theorem 3.3]{Wat}, we conclude that $\mu(\mathcal{O}(G_1))\subseteq A\otimes \mathcal{O}(G_2)$, where $A$ is a finitely generated (as algebra) Hopf subalgebra in $\mathcal{O}(G_1)$. Since any Hopf algebra is faithfully flat over its Hopf subalgebra, it follows from \cite[Theorem 13.2]{Wat} that we have an epimorphism $Spec(\mathcal{O}(G_1))\rightarrow Spec(A)$ and hence the action is factored via the unipotent group scheme $Spec(A)$ and, therefore,
$G_1\leftthreetimes G_2\cong \varprojlim_{W_{\beta} \supseteq A} G_1\leftthreetimes (G_2 / W_{\beta})$. It remains to note that the semidirect product of unipotent groups is a unipotent group (since it contains a finitely generated, nilpotent discrete torsion-free dense subgroup Remark \ref{r6}) and hence $G_1\leftthreetimes G_2$ is a prounipotent group.

In the general case, when $ G_2 $ is a prounipotent group with a unipotent action of $G_1$, we have the decomposition
 $G_1\leftthreetimes G_2\cong \varprojlim G_1/N_{\lambda}\leftthreetimes G_2$, where $G_1/N_{\lambda}$ are unipotent, and therefore our statement follows from the previous case.
\end{proof}
\begin{example}
It is worth to note that the action of a prounipotent group on itself by conjugation is always prounipotent. It is also shown in the proof of Proposition \ref{p2} below that the natural action of a prounipotent group on its relation module is prounipotent.
\end{example}
\begin{definition} \label{d1}
By a prounipotent pre-crossed module one calls $(G_2,G_1,\partial)$ a triple, where $G_1,G_2$
are prounipotent groups, $\partial: G_2 \to G_1$ is a homomorphism of prounipotent groups, $G_1$ acts prounipotently on $G_2$
from the right, satisfying, for any $k$-algebra $A$, the identity
$$\text{CM 1) } \partial(g_2^{g_1}) = g_1^{-1} \partial(g_2) g_1,$$
where the action is written in the form $(g_2,g_1) \to g_2^{g_1}$, $g_2 \in G_2(A), g_1 \in G_1(A).$
\end{definition}

\begin{definition}
A prounipotent pre-crossed module is called crossed if  for any $k$-algebra $A$ the following additional identity holds for $g_1,g_2\in G_2(A)$
$$\text{CM 2) }g^{\partial(g_1)}= g_1^{-1} g g_1.$$
The identity CM 2) is called the Peiffer identity.
Such prounipotent crossed module will be denoted by $(G_2,G_1,\partial)$.
\end{definition}

\begin{definition}\label{d12}
A homomorphism $(G_2,G_1, \partial) \to (G'_2,G'_1, \partial')$ of prounipotent crossed modules
is a pair of homomorphisms
$\varphi: G_2 \to G'_2 \text{ and } \psi: G_1 \to G'_1$ of prounipotent groups satisfying:
a). $\varphi(g_2^{g_1}) =\varphi (g_2)^{\psi(g_1)}$; and b). $\partial' \varphi(g_2) = \psi \partial(g_2).$
\end{definition}

Below we will define the notion of a free (pre-)crossed module and show that the prounipotent (pre-)crossed module of \eqref{5} is free.

\begin{definition} \label{d2}
A prounipotent (pre-)crossed module $(G_2,G_1,d)$ is called a free prounipotent (pre-)crossed module on a pointed profinite space
$(\widehat{Y}^*,1)\in G_2(k)$ if $(G_2,G_1,d)$
possesses the following universal property with respect to convergent maps $\nu: (\widehat{Y}^*,1)\rightarrow G'_2(k)$, i.e.
for any prounipotent (pre)crossed module $(G'_2,G'_1,d')$ and a convergent map $\nu: (\widehat{Y}^*,1)\rightarrow G'_2(k)$
and for any homomorphism of prounipotent groups $f:G_1\rightarrow G'_1$ such that
$fd(\widehat{Y}^*,1)=d'(k)\nu(\widehat{Y}^*,1),$ there is a unique homomorphism
of prounipotent groups $h:G_2\rightarrow G'_2$ such that $h(k)(\widehat{Y}^*,1)=\nu(\widehat{Y}^*,1)$
and the pair $(h,f)$ is a homomorphism of (pre-) crossed modules.
\end{definition}

\begin{proposition} \label{p1}
Assume we are given a simplicial prounipotent presentation \eqref{5}, then
$Ker\; d_0 \xrightarrow{d_1} F(\widehat{X}^*,*)$ is a free prounipotent pre-crossed module on $(\widehat{Y}^*,1)\subset Ker\; d_0(k)\subset F(\widehat{X}^*\cup \widehat{Y}^*,*)(k)$.
\end{proposition}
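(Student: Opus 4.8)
The plan is to exploit the fact that $d_0$ is split by $s_0$ (indeed $d_0 s_0 = id$, since $d_0 s_0(x)=d_0(x)=x$), so that by Lemma \ref{l7} the decomposition $F(\widehat{X}^*\cup\widehat{Y}^*)\cong F(\widehat{X}^*)\star F(\widehat{Y}^*)$ refines to a semidirect product $F(\widehat{X}^*\cup\widehat{Y}^*)\cong Ker\,d_0 \leftthreetimes s_0(F(\widehat{X}^*))$, with $F(\widehat{X}^*)$ acting on $K:=Ker\,d_0$ by conjugation $a^u=s_0(u)^{-1}a\,s_0(u)$. Note that $\widehat{Y}^*\subset K(k)$, because $d_0(y)=1$. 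First I would check that $(K,F(\widehat{X}^*),d_1)$ is genuinely a prounipotent pre-crossed module: the conjugation action is prounipotent, being the restriction to $K$ of the inner action of the prounipotent group $F(\widehat{X}^*\cup\widehat{Y}^*)$ (cf. Lemma \ref{l6} and the Example following it), and axiom CM1 of Definition \ref{d1} is immediate from $d_1 s_0=id$, namely $d_1(a^u)=d_1 s_0(u)^{-1} d_1(a)\, d_1 s_0(u)=u^{-1}d_1(a)u$.

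For the universal property I would pass to semidirect products in the target as well. Given a prounipotent pre-crossed module $(G_2',G_1',d')$, a convergent map $\nu:(\widehat{Y}^*,1)\to G_2'(k)$ and a homomorphism $f:F(\widehat{X}^*)\to G_1'$ with $fd_1(y)=d'\nu(y)$, I form the prounipotent group $G_2'\leftthreetimes G_1'$ (prounipotent by Lemma \ref{l6}) and define a pointed map on generators by $\Theta(x)=(1,f(x))$ for $x\in\widehat{X}^*$ and $\Theta(y)=(\nu(y),1)$ for $y\in\widehat{Y}^*$. Since $f$ restricts to a convergent map on generators (Remark \ref{r4}) and $\nu$ is convergent by hypothesis, $\Theta$ is convergent, so by the universal property of the free prounipotent group (Remark \ref{r4}) it extends uniquely to a homomorphism $H:F(\widehat{X}^*\cup\widehat{Y}^*)\to G_2'\leftthreetimes G_1'$. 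Comparing on generators shows $pr_2\circ H=f\circ d_0$, hence $H$ carries $K=Ker\,d_0$ into $Ker\,pr_2=G_2'$; I then set $h:=H|_K:K\to G_2'$.

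It remains to check that $(h,f)$ is the required morphism and that it is unique. Condition (a) of Definition \ref{d12}, $h(a^u)=h(a)^{f(u)}$, follows because $H$ is a homomorphism and $H s_0(u)=(1,f(u))$ acts by $G_1'$-conjugation in the semidirect product; condition (b), $d'h(a)=fd_1(a)$, follows by verifying $\partial'\circ H=f\circ d_1$ on generators, where $\partial'(b,v)=d'(b)v$ is itself a homomorphism precisely because CM1 holds in $(G_2',G_1',d')$. Uniqueness is the reverse of the construction: any pre-crossed module morphism $(h',f)$ with $h'(\widehat{Y}^*)=\nu(\widehat{Y}^*)$ assembles, using condition (a), into a homomorphism $H'(a,u)=(h'(a),f(u))$ of the associated semidirect products that extends $\Theta$, whence $H'=H$ by uniqueness of the extension and therefore $h'=h$.

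The main obstacle is not the algebraic bookkeeping but keeping everything inside the prounipotent and convergent framework: one must ensure that both semidirect products are genuinely prounipotent groups (this is exactly Lemma \ref{l6}, applied to the conjugation action and to the given action of $G_1'$), that $\Theta$ is convergent so that the universal property of $F(\widehat{X}^*\cup\widehat{Y}^*)$ applies, and that the correspondence between pre-crossed module morphisms $(h,f)$ and homomorphisms of the associated semidirect products is an exact equivalence in the comodule setting. Once these continuity and convergence points are secured, the remaining identities reduce to the same computations as in the discrete Brown--Huebschmann case of Section \ref{r1.2}.
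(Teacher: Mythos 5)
Your proof is correct and follows essentially the same route as the paper: map the free prounipotent group $F(\widehat{X}^*\cup\widehat{Y}^*,*)$ into the semidirect product $G_2'\leftthreetimes G_1'$ (prounipotent by Lemma~\ref{l6}) via the universal property with respect to convergent maps on generators, then restrict to $Ker\;d_0$. You are in fact slightly more complete than the paper's own argument, which leaves implicit both the containment of the image of $Ker\;d_0$ in $G_2'$ (your comparison $pr_2\circ H=f\circ d_0$ on generators) and the uniqueness of $h$ (your reassembly $H'(a,u)=(h'(a),f(u))$ using the splitting of $d_0$ by $s_0$).
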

\begin{proof} Let $A\xrightarrow{\partial} H$ be some prounipotent pre-crossed module.
For each convergent map $\nu:(\widehat{Y}^*,1) \rightarrow A(k)$
we must construct a homomorphism $\psi$ in the diagram as follows

$$
\xymatrix{
{Ker\; d_0} \ar[r] \ar@/^/[dr]^{\varphi} \ar@/^1pc/[rr]^{d_1} & F(\widehat{X}^*\cup \widehat{Y}^*,*) )  \ar[r]^{d_0} \ar[d]^{\psi} & F(\widehat{X}^*,*) \ar[d]^{f}
\\
Y \ar[r]^{\nu} \ar[ru] & A\leftthreetimes H \ar[r]^{\widetilde{\partial}} & H}
$$
where $\widetilde{\partial}(A)=\partial(A)$, $\widetilde{\partial}(H)=id_H$, $\widetilde{\partial}(a,b)=\partial(a)\cdot b$
for $a\in A(k), b\in H(k)$.
The semidirect product $A \leftthreetimes H$ is a prounipotent group,
since the action in the pre-crossed module $(A,H,\partial)$ is prounipotent.
The fact that $\widetilde{\partial}$ is a homomorphism follows by direct computation using the fact that
for $\partial$ the property CM 1) from Definition \ref{d1} holds. Indeed,
$\widetilde{\partial}((a,b)(a_1,b_1))=\widetilde{\partial}(aa_1,b^{a_1}b_1)=a a_1\cdot\partial( b^{a_1}b_1)=aa_1\partial(b^{a_1})\partial(b_1)=aa_1a_1^{-1} \partial(b)a_1\partial(b_1)=a\partial(b)a_1\partial(b_1).$

We define $\psi:F(\widehat{X}^*\cup \widehat{Y}^*,*)\rightarrow A\leftthreetimes H$ on $k$-points by the rule $\psi(k)(x)=f(k)(x)$ on $x\in \widehat{X}^*$ and $\psi(k)(y)=\nu(y)$ on $y\in \widehat{Y}^*.$
Lemma \ref{l7} yields a homomorphism of prounipotent groups
$\psi: F(\widehat{X}^*\cup \widehat{Y}^*,*)\rightarrow A\leftthreetimes H.$

Put $\varphi: Ker\;d_0\rightarrow A\leftthreetimes H$
equals to $\varphi=\psi\mid_{Ker\; d_0}$, i.~e. restricting $\psi$ to $Ker\; d_0$.
Then the pair $(\phi,f)$ is a pre-crossed module homomorphism. Indeed, let $g_1\in F(\widehat{X}^*,*)$, then there is $\widetilde{g}_1\in F(\widehat{X}^*\cup \widehat{Y}^*,*)$ such that $d_1(\widetilde{g}_1)=g_1$, and hence we have
$\phi(g_2^{g_1})=\phi(\widetilde{g}_1^{-1}g_2\widetilde{g}_1)=\phi(\widetilde{g}_1)^{-1}\phi(g_2)\phi(\widetilde{g}_1)$ and, as $A\leftthreetimes H$ is a semidirect product and $\psi|_{F(\widehat{X}^*,*)}=f$, $\phi(\widetilde{g}_1)^{-1}\phi(g_2)\phi(\widetilde{g}_1)=\phi(g_2)^{\phi(\widetilde{g}_1)}=\phi(g_2)^{f(g_1)}$ and a). of Definition \ref{d12} holds and it is enough and easy to check b). of Definition \ref{d12} for $\widehat{X}^*\cup \widehat{Y}^*$.
\end{proof}

We use the ideas of Section \ref{s1.0} to construct the prounipotent crossed module of the prounipotent presentation \eqref{5}.
Let $P=<a^{d_1(u)}=u^{-1}au>$, where $u,a\in Ker\; d_0(k)$,
the normal Zariski closure of the subgroup of Peiffer commutators and let $C=Ker\; d_0/P$. We say that
$C \xrightarrow{\overline{d_1}} F,$ where $\overline{d_1}$ is induced from $d_1$ and $F$ acts on $Ker\; d_0$ by conjugations $a^u=s_0(u)^{-1}as_0(u)$, is the prounipotent crossed module of the prounipotent presentations \eqref{5}.

The Brown-Loday Lemma (see Section \ref{r1.2}) identifies the discrete commutant $[Ker\; d_0,Ker\; d_1]$ with the image $d^2_2(NG_2)$ (which is always closed) and hence abstract commutant $[Ker\; d_0,Ker\; d_1]$ is Zariski closed.
As proven in Lemma \ref{l3.3} $[Ker\; d_0,Ker\; d_1]=P$ and therefore by the crossed module of a prounipotent presentation \eqref{5} we also mean the
crossed module from Lemma \ref{l3.3} (also \cite[Lemma 3.10]{Mikh2016}).

For any prounipotent presentation \eqref{5}, in analogy with Proposition \ref{l3}, we introduce its second homotopy group, as already suggested in Remark \ref{r7}, as
$u_2(X|R)=\pi_1(F^{(1)}_{\bullet}),$
which is the same as
$$u_2(X|R)=Ker\;( C(k)\xrightarrow{\overline{d}_1} F(k)).$$
If for a given prounipotent presentation \eqref{5} we have $u_2(X|R)=0$, then we will call such presentation \textbf{aspherical}, and we are ready to present the main result of the paper.
\begin{theorem}\label{t4}
Let \eqref{5} be an aspherical presentation of a prounipotent group $ G \cong F (X) / R $ over the base field $ k $ of characteristics zero (or $ p \geq 2 $ is a prime and \eqref{5} is a pro-$p$-presentation of a pro-$p$-group $ G $) and let \eqref{6} be a subpresentation of \eqref{5}, then \eqref{6} is also aspherical.
\end{theorem}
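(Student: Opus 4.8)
The plan is to reformulate asphericity in terms of relation modules and then to reduce the whole assertion to the cohomological dimension bound $cd(G_0)\le 2$ for the group $G_0\cong F/R_0$ presented by \eqref{6}. First I would apply the reduction of non-proper to proper presentations from Section \ref{s3} and assume both \eqref{5} and \eqref{6} proper. Writing $R$ (resp. $R_0$) for the Zariski normal closure of $Y$ (resp. of $Y_1\subset Y$) in $F=F(\widehat{X}^*,*)$, the asphericity criterion of Section \ref{s3} says that \eqref{5} is aspherical iff the relation module $\overline{R}=R/\overline{[R,R]}$ is a free $G$-module on $Y$ in the topological sense of Section \ref{s4} (in the pro-$p$-case one uses $\overline{R^p[R,R]}$). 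Dually, the target, asphericity of \eqref{6}, is equivalent to freeness of $\overline{R_0}$ over $G_0$; since $\overline{R_0}=\ker(\bigoplus_X kG_0\to I(G_0))$ is a second syzygy, this will follow once $cd(G_0)\le 2$ is established, together with the prounipotent Gasch\"{u}tz Lemma.

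The information I would extract from asphericity of \eqref{5} is structural. As $\overline{R}\cong\bigoplus_{y\in Y}kG$ is free on $Y$, the $G$-submodule generated by the sub-relators $Y_1$ is the free direct summand $\bigoplus_{y\in Y_1}kG$, and this summand coincides with the image of $R_0$ in $\overline{R}$. Hence the quotient $\overline{R}/\mathrm{im}(R_0)\cong R/R_0\overline{[R,R]}\cong S^{\mathrm{ab}}$, where $S:=R/R_0=\ker(G_0\twoheadrightarrow G)$, is $G$-equivariantly isomorphic to the free module $\bigoplus_{y\in Y\setminus Y_1}kG$; that is, the relative relation module $S^{\mathrm{ab}}$ is a free $G$-module. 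In addition $R$ and $R_0$, being closed subgroups of the free prounipotent group $F$, have $cd\le 1$ and are free prounipotent, so that $1\to S\to G_0\to G\to 1$ is an extension in which the quotient has $cd\le 2$ and the kernel $S$ is a quotient of a free prounipotent group whose abelianization $S^{\mathrm{ab}}$ is a free $G$-module.

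The engine is the Lyndon--Hochschild--Serre spectral sequence of Proposition \ref{t3.2} applied to $1\to S\to G_0\to G\to 1$, namely $E_2^{p,q}=H^p(G,H^q(S,k_a))\Rightarrow H^{p+q}(G_0,k_a)$; by d\'{e}vissage it is enough to force $H^n(G_0,k_a)=0$ for $n\ge 3$, since $k_a$ is the only simple module. From $cd(G)\le 2$ one gets $E_2^{p,q}=0$ for $p\ge 3$. The freeness of $S^{\mathrm{ab}}$ enters through $H^1(S,k_a)$: by the duality of Section \ref{s4} and the injectivity of $\mathcal{O}(G)=k\uparrow^{G}_{1}$ (Section \ref{s1.02}), freeness of $S^{\mathrm{ab}}$ makes $H^1(S,k_a)$ a cohomologically trivial $G$-module, so the whole row $E_2^{p,1}$ vanishes for $p\ge 1$. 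Provided moreover that $cd(S)\le 1$, the rows $q\ge 2$ are empty, the row $q=1$ already vanishes for $p\ge 1$, and the row $q=0$ vanishes for $p\ge 3$; hence every $E_2^{p,q}$ with $p+q\ge 3$ dies, whence $cd(G_0)\le 2$.

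The hard part, where I expect the real work to lie, is to establish $cd(S)\le 1$, i.e. that the kernel $S=R/R_0$ is free prounipotent. This cannot be obtained by a formal diagram chase, because the comparison $\bigoplus_{Y_1}kG_0\to\overline{R_0}\to\overline{R}$ loses information along $G_0\twoheadrightarrow G$; instead I would analyse the extension $1\to R_0\to R\to S\to 1$ with $R,R_0$ free prounipotent, feed in the free-summand structure of $\overline{R}$, and adapt Tsvetkov's pro-$p$ argument for cohomological dimension $2$ to the prounipotent category. Once $cd(G_0)\le 2$ holds, $\overline{R_0}$ is a second syzygy, hence projective, hence free over $G_0$; the Gasch\"{u}tz Lemma then pins its rank to $Y_1$, so that $\overline{d_1}$ is injective and $u_2(X|R_0)=0$, i.e. \eqref{6} is aspherical. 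The pro-$p$-case is entirely parallel, replacing the commutators $[\,\cdot\,,\,\cdot\,]$ by $R^p[\,\cdot\,,\,\cdot\,]$ throughout, and uses Lemma \ref{l2} to detect vanishing on fixed points.
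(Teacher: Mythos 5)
Your overall architecture is the same as the paper's (asphericity $\Leftrightarrow$ freeness of the relation module via Propositions \ref{t3} and \ref{t2}, then reduce to $cd\,G_0\le 2$ and $cd\,S\le 1$ through the Lyndon--Hochschild--Serre spectral sequence of $1\to S\to G_0\to G\to 1$), and your identification of $Im\,\phi$ with the free summand $(\mathcal{O}(G)^*)^{(\widehat{Y_1}^*,*)}$ of $\overline{R}(k)$ is exactly the paper's condition (I). But there is a genuine gap: you leave the pivotal step $cd(S)\le 1$ as an unproven black box (``adapt Tsvetkov's pro-$p$ argument''), and you mischaracterize it as something that ``cannot be obtained by a formal diagram chase.'' In the paper it \emph{is} obtained by a formal chase, and the ingredient you are missing is condition (II) of Proposition \ref{t1}: the fixed-point isomorphism $H^0(G,Im\,\phi^{\vee})\cong H^0(G_0,H^1(R_0,k_a))$. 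Plugging (I) (which gives $H^1(G,Im\,\phi^{\vee})=0$ by Lemma \ref{l13}) and (II) into the long exact cohomology sequence of \eqref{eq12} yields $H^0(G,H^2(S,k_a))=0$, whence $H^2(S,k_a)=0$ by the d\'{e}vissage Lemma \ref{l2}, i.e.\ $S$ is free. Condition (II) is in turn verified by a concrete coinvariants computation, $(Im\,\phi)_G\cong k^{(\widehat{Y_1}^*,*)}\cong R_0/[R_0,F](k)$, which rests on Proposition \ref{sub} (the subpresentation is a genuine simplicial presentation, i.e.\ $(R_0/[R_0,F](k))^{\vee}\cong k^{Y_1}$) together with Lemma \ref{l4} and Remark \ref{r10}. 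You never formulate or check anything of this kind, so the one step you flag as ``the hard part'' is precisely the step your proposal does not supply; everything else in your sketch was already the easy half.

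Two secondary deviations are also worth flagging. First, your closing move ``$\overline{R_0}$ is a second syzygy, hence projective, hence free over $G_0$, with rank pinned by the Gasch\"{u}tz Lemma'' imports a discrete-group pattern that is not available in this category as stated: the paper works dually with injective comodules and minimal injective resolutions (Propositions \ref{p18}, \ref{p19}), and the equivalence $cd\,G_0=2\Leftrightarrow\overline{R_0}(k)\cong(\mathcal{O}(G_0)^*)^{(\widehat{Y_1}^*,*)}$ is exactly Proposition \ref{t3}, with the rank fixed not by Gasch\"{u}tz (which the paper explicitly proves but never uses) but by the fixed isomorphism of Remark \ref{r9} and the non-proper case handled via Lemma \ref{l9}. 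Second, your preliminary ``reduce to proper presentations'' step is unnecessary: Propositions \ref{t3} and \ref{t2} are proved for non-proper presentations directly, and the proof of Theorem \ref{t4} applies them without such a reduction.
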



\section{Relation modules and aspherical presentations}\label{s3}
By relation module of a pro-$p$-presentation \eqref{eq1} (see also \ref{s2.1}) of a pro-$p$-group $G\cong F/R$ we understand the abelianization $\overline{R}=R/[R,R]$, where $[R,R]$ is the commutant (closed in the pro-$p$-topology), with the conjugation-induced action of $G$ on $\overline{R}$.
Relation modules of pro-$p$-groups are known to be $\mathbb{F}_pG$-modules \cite[5.7]{ZR} and therefore they are topological $\mathcal{O}(F)^*$-modules in our terminology. Similar structures have been discovered in \cite{Mag} and \cite{Mikh2016} for prounipotent presentations with finiteness assumptions.
In Section \ref{s3.1} we prove that relation modules without any assumptions of finiteness are naturally topological modules with the conjugation-induced action.

The prounipotent Gasch\"{u}tz Lemma and the reduction of the study of non-proper presentations to the theory of proper presentation are contained in Section \ref{s4.1}.

In Section \ref{s3.2} we show that the presentation \eqref{5} is aspherical if and only if its relation module is free, we also establish a cohomological criterion of asphericity, namely it is $cd\; G\leq 2$.
Similar results for pro-$p$-groups were already known in \cite[Proposition 7.7]{Koch} and for proper finite type prounipotent presentations over fields of zero characteristics in \cite{LM2} and \cite{Mikh2017}.

In Section \ref{s1.4} we adapt V.M.Tsvetkov's ideas \cite{Ts} about pro-$p$-groups of cohomological dimension 2, obtaining a criterium for asphericity for subpresentations, we also replace his dubious argument in the proof of equivalence (a), (b) and (d), (e) on the spectral sequence argument prepared in Section \ref{s2.3}.
Section \ref{s3.5} contains the proof of Theorem \ref{t4}.
\subsection{Relation modules of prounipotent presentations}\label{s3.1}

\begin{proposition} \label{p2}
Suppose we are given $S\lhd F=F(\widehat{X}^*,*)$ a normal subgroup of a free prounipotent group $F$ over a pointed profinite space $(\widehat{X}^*,*)$. Then the rational points $\overline{S}(k)$ of the abelianization $\overline{S}=S/[S,S]$ can be naturally endowed with the structure of a topological $\mathcal{O}(F)^*$-module with the conjugation induced action of $F$ on $\overline{S}$.
\end{proposition}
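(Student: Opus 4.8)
The plan is to reduce the claim to the comodule--module duality of Section \ref{s4} and then exhibit the required comodule explicitly as the space of primitives of $\mathcal{O}(\overline{S})$, so that no ad hoc construction of the action is needed. First I would record the structural facts. Conjugation $F\times F\to F$, $(g,s)\mapsto g^{-1}sg$, is a morphism of affine schemes, and since $S\lhd F$ it restricts to a morphism $F\times S\to S$; as $[S,S]$ is closed it is preserved, so the action descends to a morphism $F\times \overline{S}\to \overline{S}$ realizing $F$ as a group of automorphisms of the abelian prounipotent group $\overline{S}=S/[S,S]$. (In the pro-$p$-case one replaces $[S,S]$ by $S^p[S,S]$, exactly per the convention of Section \ref{s2.1}, so that $\overline{S}$ becomes an $\mathbb{F}_p$-vector group; the argument is otherwise identical.) By the Example following Lemma \ref{l6} the conjugation action of $F$ on itself is prounipotent, and this passes to the quotient $\overline{S}$; consequently $\overline{S}=\varprojlim_{\alpha} V_{\alpha}$ is an inverse limit of finite-dimensional abelian unipotent groups along $F$-equivariant transition maps, whence $\overline{S}(k)=\varprojlim_{\alpha} V_{\alpha}(k)$ is a linearly-compact $k$-vector space.

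Next I would pass to primitives. Write $\rho:\mathcal{O}(\overline{S})\to \mathcal{O}(F)\otimes\mathcal{O}(\overline{S})$ for the coaction encoding the action morphism. Because $F$ acts by group automorphisms, $\rho$ is compatible with the Hopf structure and therefore carries the subspace $W:=\mathcal{P}\mathcal{O}(\overline{S})$ of primitive elements into $\mathcal{O}(F)\otimes W$; by Lemma \ref{l2} the resulting coaction $W\to \mathcal{O}(F)\otimes W$ is locally finite, so $W$ is a genuine rational right $\mathcal{O}(F)$-comodule, concretely $W\cong\varinjlim_{\alpha} V_{\alpha}^{*}$, the union of the finite-dimensional contragredient representations dual to the $V_{\alpha}$. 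Using the primitives bookkeeping of Remark \ref{e5} (which identifies the rational points of an abelianization with the dual of the primitives, $F/[F,F](k)\cong(\mathcal{P}\mathcal{O}(F))^{*}$), I would identify the continuous dual $\overline{S}(k)^{\vee}\cong W$, and dually $\overline{S}(k)\cong W^{*}$.

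Finally I would invoke Definition \ref{d10}: since $(\overline{S}(k)^{\vee},\rho|_{W})\cong (W,\rho|_{W})$ is a right $\mathcal{O}(F)$-comodule, its continuous dual $\overline{S}(k)\cong W^{*}$ is \emph{by definition} a right topological $\mathcal{O}(F)^{*}$-module, the action $\Delta^{*}_{\overline{S}(k)}$ being the transpose of $\rho|_{W}$. Unwinding this transpose through the evaluation map of Section \ref{s4} and Corollary \ref{c2}, one checks that it coincides with the conjugation-induced action of $F$ on $\overline{S}(k)$, which is the assertion; the fact that the action factors through $G=F/S$ (inner automorphisms by $S$ acting trivially on $\overline{S}$) is automatic and not needed here.

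I expect the main obstacle to be the topological bookkeeping rather than any conceptual point. One must guarantee that $[S,S]$ (resp. $S^{p}[S,S]$) is closed, so that $\overline{S}$ is honestly prounipotent; that the conjugation action remains prounipotent on the possibly non-finitely-generated $\overline{S}$, so that $\overline{S}(k)$ is linearly compact; and that taking primitives commutes with the relevant direct and inverse limits. All three are precisely what the local-finiteness Lemma \ref{l2} and the continuous duality of Lemma \ref{l4} are designed to control, so the weight of the proof rests on applying those lemmas to tame the infinite-dimensional topology.
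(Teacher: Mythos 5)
Your proposal is correct in outline but takes a genuinely different route from the paper. The paper works directly on the module side: for finite $X$ it filters $F$ by the lower central series $L_i$, uses convergent bases and Quillen's identification $L_i/L_{i+1}\cong C_i/C_{i+1}\otimes k$ to show the quotients $\overline{S}_i(k)$ are finite dimensional, linearizes the conjugation action of the dense discrete subgroup $\Phi(X)/C_i$ to a $k[\Phi(X)/C_i]$-action, and completes it $I$-adically via Quillen's formula $\mathcal{O}(F/L_i)^*\cong\varprojlim_n k[\Phi(X)/C_i]/I^n_{\Phi}$; the one delicate point, settled with the semidirect product $\overline{S}_i(k)\leftthreetimes F/L_i$ and Lemma \ref{l6}, is the comparison $\overline{S}_i(k)I^n_{\Phi}=\overline{S}_i(k)\widehat{I}^n$ guaranteeing that the completed action is well defined; it then passes to inverse limits over $n$, over $i$, and, for infinite $X$, over the finite pointed subspaces of $(\widehat{X}^*,*)$. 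You instead construct the dual comodule intrinsically --- coaction on $\mathcal{O}(\overline{S})$, restriction to the primitives $W$, identification $\overline{S}(k)\cong W^*$ --- and then invoke Definition \ref{d10}, which is shorter and avoids the group-ring completion entirely. The price is that you lean on facts you assert rather than prove: that the coaction preserves primitives (true, a routine comodule--bialgebra computation), and that $V(k)\cong(\mathcal{P}\mathcal{O}(V))^*$ for an arbitrary abelian prounipotent $V$, whereas Remark \ref{e5} only treats the free case (the extension via $V\cong\varprojlim\mathbb{G}_a^{n_\alpha}$ in characteristic zero, and via constant schemes when $k=\mathbb{F}_p$, is easy but should be said). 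One genuine blemish is circularity: you cite the Example following Lemma \ref{l6} for prounipotence of the $F$-action on $\overline{S}$, but that Example explicitly defers the relation-module case to the proof of Proposition \ref{p2} itself; fortunately your argument does not need it, since once the action morphism $F\times\overline{S}\rightarrow\overline{S}$ exists, Yoneda yields the coaction on $\mathcal{O}(\overline{S})$, and local finiteness of comodules (Lemma \ref{l2}, resp.\ \cite[3.3]{Wat}) supplies the $F$-equivariant decomposition $\overline{S}\cong\varprojlim V_{\alpha}$ you use downstream, so you should replace that citation by this argument. Finally, note what the paper's computational route buys that yours does not: the explicit finite-dimensionality and convergent-basis analysis is the source of Corollary \ref{c3} (stated as ``a byproduct of ideas in the proof'') and of the verification that the module structure is independent of the chosen basis, so if you adopt your route those two facts need separate arguments --- for the identification of the transposed action with conjugation, evaluating the dual pairing on group-like elements $g\in F(k)\subset\mathcal{G}\mathcal{O}(F)^*$ gives $e_s\cdot g=e_{s^g}$, and density of rational points together with continuity (in effect Corollary \ref{c3}) then pins the structure down.
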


\begin{proof}
First, we consider the case when $X$ is finite, then $S(k)$ has a convergent basis \cite{Mag}, say $Z\in S(k)$, that is $Z$ generates $S(k)$ as a Zariski-normal subgroup and all but finitely many elements of $Z$ do lie in the lower series filtration $L_n\leq F=F(X)(k)$. In fact, the quotients $L_i/L_{i+1}\cong C_i/C_{i+1}\otimes k$ \cite[Corollary 3.7, A.3]{Qui5} (where $C_i$ is a lower central filtration of the free discrete group $\Phi(X)$)
have finite dimension \cite[Theorem 1, Chapter I,4.6]{Se}, hence the
quotients $S(k)\cap L_i(k)/S(k)\cap L_{i+1}(k)$ are also of finite dimension and, therefore, taking their lifts in $S(k)$ we get a convergent basis.
As a consequence, we see that the quotients  $\overline{S}_i(k)=\overline{S(k)/S(k)\cap L_i(k)}$ are in fact finite-dimensional vector spaces over $k$.

We want to show that each $\overline{S}_i(k)$ with the conjugation-induced action of $(F/L_i)(k)$ (we take into account that conjugation action of $L_i(k)$ on $\overline{S}_i(k)$ is trivial) is a topological $\mathcal{O}(F/L_i)^*$-module.
By Definition \ref{d10}, this is equivalent to finding a continuous $k$-linear mapping $$\mu_i:\overline{S}_i(k)\widehat{\otimes} \mathcal{O}(F/L_i)^*\rightarrow \overline{S}_i(k),$$
arising naturally from the conjugation action and included into the necessary diagrams.

By \cite[Theorem A.2., Corollary A.4.]{HM2003} $\Phi(X)/C_i\subset F(k)/L_i(k)$ and hence $\Phi(X)/C_i$ also acts on $S_i(k)=S(k)/S(k)\cap L_i(k)$ by conjugations. We continue this action by linearity to the group ring $k[\Phi(X)/C_i]$:
$$\widetilde{\mu}_i:\overline{S}_i(k)\otimes k[\Phi(X)/C_i]\rightarrow \overline{S}_i(k),$$ given by the rule
$r\otimes\sum a_jf_j\mapsto \sum a_jr^{f_j}, a_j\in k, f_j\in \Phi(X)/C_i,r\in \overline{S}_i(k)$.

The required mapping $\mu_i$ will be obtained as a completion of this map. We start from Quillen's formulae $\mathcal{O}(F/L_i)^*\cong \varprojlim_n k[\Phi(X)/C_i]/I_{\Phi}^n$ and as
$\Phi(X)/C_i$ is torsion-free nilpotent, then $\cap I_{\Phi}^n=0$, where $I_{\Phi}$ is the augmentation ideal of $k[\Phi(X)/C_i]$,
and hence we get the decomposition $\overline{S}_i(k)\cong \varprojlim_n \overline{S}_i(k)/\overline{S}_i(k)I^n_{\Phi},$
where $\overline{S}_i(k)I^n_{\Phi}$ denotes the $k$-linear subspace of $\overline{S}_i(k)$ generated by elements of the form $a\cdot f,$ where $a\in \overline{S}_i(k)$, $f\in I^n_{\Phi}$.

$F$ acts prounipotently on $\overline{S}$. Indeed, $F$ acts trivially on graded quotients $S\cap L_i/S\cap L_{i+1}$, and we take their images under abelianization of $S/S\cap L_i$ to obtain the quotients $\overline{S/S\cap L_i}$ of the prounipotent filtration of $\overline{S}$.

Let $\widetilde{G}=\overline{S}_i(k)\leftthreetimes F(X)/L_i$, by Lemma \ref{l6} $\widetilde{G}$ is a unipotent group and
we have well defined, by Quillen's formulae, closed subspaces $\overline{S}_i(k)\cdot\widehat{I}^m\subseteq\overline{S}_i(k)$, where
$\widehat{I}$ is the augmentation ideal of $\mathcal{O}(F/L_i)^*$. As $\overline{S}_i(k)$ is of finite dimension,
 then there is $m\in \mathbb{N}$ such that $\overline{S}_i(k)\cdot\mathfrak{I}^m=0$, where $\mathfrak{I}\subset \mathcal{O}(\widetilde{G})^*$ is the augmentation ideal, but
$\cap\overline{S}_i(k)I^n_{\Phi}\subseteq\cap\overline{S}_i(k)\widehat{I}^n\subseteq\cap\overline{S}_i(k)\mathfrak{I}^n=0$ and therefore $\overline{S}_i(k)I^n_{\Phi}=\overline{S}_i(k)\widehat{I}^n$ for each $n\in \mathbb{N}$.
$(\overline{\widetilde{S}}_i\leftthreetimes \Phi(X)/C_i)^{\wedge}_u(k)\cong \overline{S}_i(k)\leftthreetimes F/L_i(k)$, by Remark \ref{r6}, where $\overline{\widetilde{S}}(k)$ is taken finitely generated and dense in $\overline{S}_i(k)$, now we see that the conjugation action of $F/L_i(k)$ on $\overline{S}_i(k)$ coincides with the action defined by completion, and therefore the last one is independent of the chosen basis $X$ in $F/L_i(k)$.
Since $\overline{S}_i(k)/\overline{S}_i(k)I^n_{\Phi}\cong \overline{S}_i(k)/\overline{S}_i(k)\widehat{I}^n$ and $\mathcal{O}(F)^*/\widehat{I}^n\cong k[\Phi(X)/C_i]/I^n_{\Phi}$, for each $n\in \mathbb{N}$ we have the $k$-linear maps of finite dimensional vector spaces
$$\overline{S}_i(k)/\overline{S}_i(k)\widehat{I}^n\otimes \mathcal{O}(F/L_i)^*/\widehat{I}^n\rightarrow \overline{S}_i(k)/\overline{S}_i(k)\widehat{I}^n,$$
which are completely defined by the conjugation action of $F/L_i,$ since by \cite[Proposition 3.6 (a)]{Qui5} for $n>i$ the canonical map $\Phi(X)/C_i\rightarrow k[\Phi(X)/C_i]/I^n_{\Phi}$ is injective and $\Phi(X)/C_i$ defines this action.
Passing to the inverse limit over $n$, we obtain the required $k$-linear mapping $\mu_i$, which fits into the required diagrams, since the mappings $\widetilde{\mu}_i$ are taken from the associative group ring action with unit and, therefore, $\overline{S}_i(k)$ is the topological $\mathcal{O}(F/L_i)^*$-module.
Passing to the inverse limit over $i$ we get $\mathcal{O}(F)^*$-module structure on $\overline{S}(k)$.

Let $X$ be a chosen basis in $(F/[F,F])^{\vee}$, then we present $F=F(X)(k)=F(\widehat{X},*)(k)$ as the inverse limit $F\cong\varprojlim_{\lambda} F(X_{\lambda},*)$ over finite pointed spaces $(X_{\lambda}^*,*)$ as in \eqref{eq3}.
As a result, we obtain the decomposition
$S(k)\cong\varprojlim S_{\lambda}(k)$ into the inverse limit of
$S_{\lambda}(k)=\phi_{\lambda}(S)(k),$ where $\phi_{\lambda}:F\rightarrow F(X_{\lambda}^*,*)$
- the natural projections and, therefore, the
decomposition $\overline{S}=\varprojlim \overline{S}_{\lambda}$. For each $\lambda$ we have $\mathcal{O}(F(X_{\lambda}^*,*))^*$-module structure $\mu_{\lambda}:\overline{S}_{\lambda}(k)\widehat{\otimes} \mathcal{O}(F_{\lambda})^*\rightarrow \overline{S}_{\lambda}(k)$, induced by the conjugation action of $F_{\lambda}=F(X_{\lambda}^*,*)(k)$ on $S_{\lambda}(k)$
and, as the conjugation action commute with factorization, we can take $\mu=\varprojlim \mu_{\lambda}$ in order to obtain the structure of $\mathcal{O}(F)^*$-topological module on $\overline{S}(k)$. By construction, taking in mind Proposition \ref{p001}, this action is also independent of the chosen basis $X$.
\end{proof}

As a byproduct of ideas in the proof, we obtain the following useful corollaries

\begin{corollary}\label{c3}
Let $M$ and $N$ be topological $\mathcal{O}(G)^*$-modules and let $f:M\rightarrow N$ be a $G(k)$-homomorphism of linearly-compact vector spaces, then $f$ is a homomorphism of topological $\mathcal{O}(G)^*$-modules.
\end{corollary}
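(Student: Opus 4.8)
The plan is to use the fact that, for a prounipotent group, the rational points $G(k)$ sit inside the complete Hopf algebra $\mathcal{O}(G)^*$ as its group-like elements, $G(k)\cong\mathcal{G}\mathcal{O}(G)^*$ (Example \ref{e1.1}), and that the action of $G(k)$ on any topological $\mathcal{O}(G)^*$-module $M$ is simply the structure map $\Delta_M^*:M\widehat{\otimes}\mathcal{O}(G)^*\rightarrow M$ of Definition \ref{d10} restricted along $G(k)\hookrightarrow\mathcal{O}(G)^*$. Thus ``$f$ is a $G(k)$-homomorphism'' means $f$ is continuous, $k$-linear, and commutes with the action of every group-like element, while ``$f$ is a homomorphism of topological modules'' means $f$ commutes with the action of \emph{all} of $\mathcal{O}(G)^*$. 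The whole statement therefore reduces to upgrading equivariance with respect to group-likes to full $\mathcal{O}(G)^*$-equivariance, by a density-and-continuity argument, since the group-likes span $\mathcal{O}(G)^*$ densely.

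Concretely, I would form the two continuous $k$-linear maps
$$\alpha=f\circ\Delta_M^*,\qquad \beta=\Delta_N^*\circ(f\widehat{\otimes}\operatorname{id}):\ M\widehat{\otimes}\mathcal{O}(G)^*\rightarrow N,$$
both continuous because $f$, $\Delta_M^*$ and $\Delta_N^*$ are continuous. The commutativity of the homomorphism diagram of Definition \ref{d10} is precisely the equality $\alpha=\beta$. By hypothesis $\alpha$ and $\beta$ agree on every $m\widehat{\otimes} g$ with $g\in G(k)$ group-like, and hence, by $k$-linearity in the second variable, on all of $M\otimes\langle G(k)\rangle_k$, where $\langle G(k)\rangle_k$ denotes the $k$-span of the group-like elements. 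To finish I would invoke density: by Quillen's formulae (Examples \ref{e7}, \ref{e1.1}) one has $\mathcal{O}(G)^*\cong\widehat{k}\widetilde{G}$ for a finitely generated Zariski-dense discrete $\widetilde{G}\subset G(k)$ (and the complete group algebra in the pro-$p$-case), so the ordinary group algebra $k[\widetilde{G}]\subseteq\langle G(k)\rangle_k$ is dense for the augmentation-adic topology. Since $N$ is linearly-compact, hence Hausdorff, two continuous maps agreeing on a dense subset coincide, giving $\alpha=\beta$.

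I expect the main obstacle to be the passage through the completed tensor product: one must verify that density of $\langle G(k)\rangle_k$ in $\mathcal{O}(G)^*$ actually yields density of $M\otimes\langle G(k)\rangle_k$ in $M\widehat{\otimes}\mathcal{O}(G)^*$ for the tensor-product topology of Section \ref{s4}. To sidestep this analytic point I would run the cleaner route through Lemma \ref{l2}: decompose $M\cong\varprojlim M_\lambda$ and $N\cong\varprojlim N_\lambda$ into inverse limits of finite-dimensional topological $\mathcal{O}(G_\lambda)^*$-modules. On each finite-dimensional layer the completed tensor product is the ordinary one, and the action of $\mathcal{O}(G_\lambda)^*$ factors through a finite-dimensional quotient algebra which, exactly as in the proof of Proposition \ref{p2} (where $\Phi(X)/C_i$ was seen to define the entire action), is spanned by the images of the group-like elements of $G_\lambda(k)$; indeed a dense subalgebra surjects onto any finite-dimensional image. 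Hence on each layer the equality $\alpha=\beta$ is a matter of linear algebra, and the remaining task is the routine verification that $f$, being continuous, is compatible with the two decompositions (after refining the indexing poset if necessary) so that the layerwise equalities assemble into $\alpha=\beta$ in the limit, proving that $f$ is a homomorphism of topological $\mathcal{O}(G)^*$-modules.
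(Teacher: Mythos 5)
Your proof is correct and follows essentially the same route as the paper: Corollary \ref{c3} is stated there without a separate argument, as a ``byproduct of ideas in the proof'' of Proposition \ref{p2}, and those ideas are exactly your second, layerwise route --- decompose $M$ and $N$ via Lemma \ref{l2} into inverse limits of finite-dimensional layers, note (via Quillen's formulae / density of the span of group-like elements) that on each layer the $\mathcal{O}(G)^*$-action factors through a finite-dimensional quotient onto which that span surjects, so that $G(k)$-equivariance forces full $\mathcal{O}(G)^*$-equivariance, and then pass to the limit. Your first, global density argument through the completed tensor product is a harmless variant whose analytic subtlety you correctly flag and then sidestep in favor of the layerwise argument the paper intends.
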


\begin{corollary}\label{c1}
Let \eqref{eq1} be a prounipotent presentation, 
then the linearly compact $k$-vector space $\overline{R}(k)=R/[R,R](k)$ can be naturally endowed with the structure of a topological $\mathcal{O}(G)^*$-module with the action induced by the conjugations of $F$ on $R$.
We call such a topological $\mathcal{O}(G)^*$-module the relation module of a prounipotent presentation.
\end{corollary}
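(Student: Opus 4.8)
The plan is to derive Corollary~\ref{c1} directly from Proposition~\ref{p2}, adding the single observation that the conjugation action of $F$ on $\overline{R}$ factors through the quotient $\pi\colon F\twoheadrightarrow G=F/R$. First I would apply Proposition~\ref{p2} to the normal subgroup $S=R\lhd F=F(\widehat{X}^*,*)$. This at once equips the linearly-compact $k$-vector space $\overline{R}(k)=R/[R,R](k)$ with a natural topological $\mathcal{O}(F)^*$-module structure induced by the conjugation action of $F$ on $R$, and, as recorded at the end of that proof, this structure does not depend on the chosen basis $X$; it then remains only to see that this $\mathcal{O}(F)^*$-action descends to an $\mathcal{O}(G)^*$-action.

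Next I would verify that the closed normal subgroup $R$ acts trivially on $\overline{R}$. On rational points this is immediate: for $r\in R(k)$ and the class $\bar a$ of $a\in R(k)$ one has $\bar a^{\,r}=\overline{r^{-1}ar}=\bar a$, since $a^{\,r}a^{-1}=[r^{-1},a]$ lies in the closed commutant $[R,R]$ that we have factored out. Because $R(k)$ is Zariski dense in $R$ (Remark~\ref{r6} in characteristic zero, while $R(k)=R$ in the pro-$p$ case) and the locus of triviality is Zariski closed, the whole subgroup scheme $R$ acts trivially, i.e.\ $\overline{R}=\overline{R}^{\,R}$.

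Finally I would upgrade this triviality to a factorization at the level of topological modules. Passing to the continuous dual $\mathcal{O}(F)$-comodule $\overline{R}(k)^{\vee}$ of Definition~\ref{d10}, the equality $\overline{R}=\overline{R}^{\,R}$ forces its coaction to land in $\overline{R}(k)^{\vee}\otimes\mathcal{O}(F)^{R}$; using the identification $\mathcal{O}(F)^{R}\cong\mathcal{O}(F/R)=\mathcal{O}(G)$ (the same invariants computation via \eqref{e2} employed in the proof of the Lyndon--Hochschild--Serre sequence, Proposition~\ref{t3.2}), this exhibits $\overline{R}(k)^{\vee}$ as an $\mathcal{O}(G)$-comodule, whence $\overline{R}(k)$ is a topological $\mathcal{O}(G)^*$-module by continuous biduality. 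The step I expect to be the main obstacle is precisely this last passage: one must ensure that the inverse-limit decomposition $\overline{R}=\varprojlim\overline{R}_{\lambda}$ delivered by Proposition~\ref{p2} is compatible with the invariants identification $\mathcal{O}(F)^{R}\cong\mathcal{O}(G)$, so that the coaction factors through $\mathcal{O}(G)$ continuously and respects linear compactness rather than merely on a Zariski-dense algebraic piece.
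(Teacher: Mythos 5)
Your proposal is correct and follows essentially the same route as the paper, whose entire proof is the observation that $R\lhd F$ lies in the kernel of the conjugation action on $\overline{R}$ (your commutator computation $r^{-1}ar\,a^{-1}\in[R,R]$), so that the $\mathcal{O}(F)^*$-structure from Proposition~\ref{p2} factors through $G=F/R$. Your additional verifications (Zariski density to pass from rational points to the subgroup scheme, and the dual-comodule factorization via $\mathcal{O}(F)^R\cong\mathcal{O}(G)$) are correct elaborations of what the paper leaves implicit, not a different argument.
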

\begin{proof} $R\lhd F$ lies in the kernel of the conjugation action of $F$ on $\overline{R}$ and hence the action factors through the action of $G$ and, therefore, $\overline{R}(k)$ is topological $\mathcal{O}(G)^*$-module.
\end{proof}

\begin{proposition}\label{p8}
Suppose we are given a prounipotent presentation in the simplicial form \eqref{5}. Then $\overline{C}(k)$ inherits
structure of a topological $\mathcal{O}(G)^*$-module.
\end{proposition}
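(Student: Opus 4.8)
The plan is to realize $\overline{C}(k)$ as a quotient of the abelianized kernel $\overline{Ker\; d_0}(k)$ by the image of the Peiffer subgroup, and then transport the module structure already produced by Proposition \ref{p2} together with the descent argument of Corollary \ref{c1}. Write $\widetilde{F}=F(\widehat{X}^*\cup \widehat{Y}^*,*)$; by Lemma \ref{l7} this is a free prounipotent group and $Ker\; d_0\lhd \widetilde{F}$ is a normal subgroup. Applying Proposition \ref{p2} to $S=Ker\; d_0\lhd \widetilde{F}$ endows $\overline{Ker\; d_0}(k)$ with the structure of a topological $\mathcal{O}(\widetilde{F})^*$-module under the conjugation action of $\widetilde{F}$. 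Since $d_1s_0=\mathrm{id}$, the homomorphism $s_0$ realizes $F=F(\widehat{X}^*,*)$ as a retract, hence a closed subgroup, of $\widetilde{F}$ (one may also invoke Lemma \ref{l7} and Remark \ref{r8}); restricting the comodule structure along this inclusion, via the restriction functor of Section \ref{s1.01}, turns $\overline{Ker\; d_0}(k)$ into a topological $\mathcal{O}(F)^*$-module whose action is precisely the crossed-module action $a^u=s_0(u)^{-1}as_0(u)$.

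Next I would identify $\overline{C}(k)$ as a quotient of this module. Writing $D=[Ker\; d_0,Ker\; d_0]$ and letting $P$ denote the Peiffer subgroup, a direct computation of abelianizations gives $\overline{C}=Ker\; d_0/(D\cdot P)\cong \overline{Ker\; d_0}/\overline{P}$, where $\overline{P}$ is the image of $P$ in $\overline{Ker\; d_0}$. By Lemma \ref{l3.3} we have $P=[Ker\; d_0,Ker\; d_1]$, the Zariski normal closure of the Peiffer commutators, which is a closed $F$-invariant subgroup of $Ker\; d_0$; its image $\overline{P}(k)$ under the abelianization homomorphism is again closed, since homomorphisms of prounipotent groups carry closed subgroups to closed subgroups. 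As $\overline{P}(k)$ is an $F$-invariant closed subspace, Corollary \ref{c3} shows the inclusion $\overline{P}(k)\hookrightarrow \overline{Ker\; d_0}(k)$ is a morphism of topological $\mathcal{O}(F)^*$-modules, and, the category of such modules being abelian, the cokernel $\overline{C}(k)=\overline{Ker\; d_0}(k)/\overline{P}(k)$ is again a linearly-compact topological $\mathcal{O}(F)^*$-module.

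Finally I would descend the action from $F$ to $G\cong F/R$, exactly as in Corollary \ref{c1}. Here $R=\overline{d_1}(C)$, and the Peiffer identity CM 2) says that for $r=\overline{d_1}(c')\in R$ the action of $r$ on $C$ coincides with conjugation by $c'$ inside $C$; this conjugation becomes trivial after abelianization, so $R(k)$ acts trivially on $\overline{C}(k)$. Hence the $\mathcal{O}(F)^*$-action factors through $\mathcal{O}(G)^*$, which gives $\overline{C}(k)$ the asserted topological $\mathcal{O}(G)^*$-module structure with the conjugation-induced action.

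The main obstacle I expect lies in the topological bookkeeping of the middle step: verifying that $\overline{P}(k)$ is genuinely a closed submodule, so that the quotient stays linearly-compact, and checking that restricting the $\widetilde{F}$-structure along $s_0$ reproduces exactly the crossed-module action rather than some twist of it. Once these points are settled, the algebraic identity $\overline{C}\cong \overline{Ker\; d_0}/\overline{P}$ and the descent to $G$ are formal consequences of Proposition \ref{p2}, Corollary \ref{c1} and the abelianness of the category of topological modules.
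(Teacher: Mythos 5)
Your proof is correct and follows essentially the same route as the paper's: apply Proposition \ref{p2} to $\overline{Ker\;d_0}(k)$, pass to the quotient by the closed $\mathcal{O}(F)^*$-submodule arising from the Peiffer subgroup $[Ker\;d_0,Ker\;d_1]$, and then use the Peiffer identity CM 2) to show $R(k)$ acts trivially so the action descends to $\mathcal{O}(G)^*$. Your extra bookkeeping (restricting the $\mathcal{O}(F(\widehat{X}^*\cup\widehat{Y}^*,*))^*$-structure along $s_0$ to recover the crossed-module action, and verifying closedness of $\overline{P}(k)$ via the Brown--Loday identification) only makes explicit steps the paper leaves implicit.
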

\begin{proof}
By Proposition \ref{p2}, $\overline{Ker\; d_0}(k)$ - the abelianization of $Ker\; d_0$ is naturally a $\mathcal{O}(F)^*$-module. It contains $\langle[Ker\;d_0,Ker\; d_1](k)\rangle$ - the $\mathcal{O}(F)^*$-submodule of $\overline{Ker\; d_0}(k)$ generated by the image of $[Ker\;d_0,Ker\; d_1](k)$ in $\overline{Ker\; d_0}(k)$.

Since $\overline{C}(k)\cong \overline{Ker\; d_0}(k)/\langle[Ker\;d_0,Ker\; d_1](k)\rangle$ we get that $\overline{C}(k)$ is a topological $\mathcal{O}(F)^*$-module.
The rest is to show, that $R(k)$ lies in the kernel of this action.
Lets take $d_1(x)\in R(k)=d_1(Ker\;d_0(k))$, $x,y\in Ker\;d_0(k)$ by \text{CM 2)} of Definition \ref{d1.2} we have $x^{-1}yx=y^{d_1(x)}$ and hence $x^{-1}yxy^{-1}=y^{d_1(x)}y^{-1}$ - the image of $y^{d_1(x)}y^{-1}$ in $\overline{C}(k)$ is zero.
\end{proof}

\begin{definition}\label{d6}
Let $G$ be a prounipotent group, we say that the mapping $\mu:(\widehat{X}^*,*)\rightarrow L$ of a pointed profinite space $(\widehat{X}^*,*)$ into a topological $\mathcal{O}(G)^*$-module $L$ is convergent if there exists a decomposition $L\cong \varprojlim L_{\lambda}$ into the inverse limit of finite dimensional $\mathcal{O}(G)^*$-modules $L_{\lambda}$, such that for all but finite $x\in \widehat{X}^*$ we have $\phi_{\alpha}\mu(x)=1$, where $\phi_{\lambda}:L\rightarrow L_{\lambda}$ is the projection.
We also call the topological $\mathcal{O}(G)^*$-module $M$ free over the pointed profinite space $i:(\widehat{X}^*,*)\rightarrow M$, $i(*)=0$, we denote it $(\mathcal{O}(G)^*)^{(\widehat{X}^*,*)}$, if it has the following universal property:
for any toplogical $\mathcal{O}(G)^*$-module $L$ and a convergent mapping $\mu:(\widehat{X}^*,*)\rightarrow L$ there exists a homomorphism $\widehat{\mu}: M\rightarrow L$ of topological $\mathcal{O}(G)^*$-modules such that $\mu=\widehat{\mu}\circ i$.
\end{definition}

\begin{lemma} For any pointed profinite space $(\widehat{X}^*,*)$ there is a free topological $\mathcal{O}(G)^*$-module $M=(\mathcal{O}(G)^*)^{(\widehat{X}^*,*)}$ over $(\widehat{X}^*,*)$.
\end{lemma}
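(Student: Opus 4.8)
The plan is to build the free module first over finite pointed sets and then to pass to an inverse limit along the system \eqref{eq3} presenting $(\widehat{X}^*,*)$. The cornerstone is the observation that $\mathcal{O}(G)^*$ is itself the free topological module of rank one, generated by its distinguished element $1$. This is read off from the continuous duality of Section \ref{s4} together with \eqref{eq10}: writing an arbitrary topological module as $P=N^*$ for a comodule $N$ (Definition \ref{d10}, Corollary \ref{c2}), a continuous homomorphism $\mathcal{O}(G)^*\to P$ corresponds, under the anti-equivalence, to a comodule map $N\to\mathcal{O}(G)$, and by \eqref{eq10} the latter are in natural bijection with $N^*\cong P$, the bijection being realized by evaluation at $1\in\mathcal{O}(G)^*$. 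Hence $Hom(\mathcal{O}(G)^*,P)\cong P$ for every topological module $P$, which is exactly the statement that $\mathcal{O}(G)^*$ is free of rank one.

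For a finite pointed set $(Z^*,*)$ I would take $M_Z=\bigoplus_{z\in Z}\mathcal{O}(G)^*$, with $\iota_Z$ sending each $z$ to the generator $1$ of the $z$-th summand and $*\mapsto 0$. Since on finite index sets the direct sum and product of topological modules coincide and $Hom$ is additive, the rank-one computation yields $Hom(M_Z,P)\cong P^Z$, i.e. the pointed maps $(Z^*,*)\to P$; as every such map is automatically convergent, $M_Z$ is free on $(Z^*,*)$. For the general case I would present $(\widehat{X}^*,*)=\varprojlim_{i\in J}(Z_i^*,*)$ as in Remark \ref{r5} and set $M=\varprojlim_{i\in J}M_{Z_i}$, the transition maps $M_{Z_j}\to M_{Z_i}$ for $i\preceq j$ being induced by the coordinate projections $\phi^j_i$ of \eqref{eq3} (they kill the summands indexed by $Z_j\setminus Z_i$). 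As an inverse limit of linearly-compact topological $\mathcal{O}(G)^*$-modules, $M$ is again such a module, and the compatible finite inclusions $\iota_{Z_i}$ assemble into a continuous $\iota\colon(\widehat{X}^*,*)\to M$ with $\iota(*)=0$, which is the map $i$ required in Definition \ref{d6}.

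It remains to check the universal property, and this is where the real work lies. Given a convergent $\mu\colon(\widehat{X}^*,*)\to L$ with $L\cong\varprojlim_\lambda L_\lambda$ and $L_\lambda$ finite-dimensional, I would argue that for each $\lambda$ the composite $\mu_\lambda=\phi_\lambda\circ\mu$ has finite image — by convergence all but finitely many generators are collapsed — and hence, being continuous on a profinite space, factors through a finite quotient $(Z_{i(\lambda)}^*,*)$ by \cite[Lemma 1.1.16]{ZR}. The finite-case freeness then extends the resulting pointed map uniquely to a homomorphism $\hat{\mu}_\lambda\colon M_{Z_{i(\lambda)}}\to L_\lambda$, which I precompose with the projection $M\to M_{Z_{i(\lambda)}}$ to obtain $M\to L_\lambda$.

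The main obstacle is to verify that the family $\{\hat{\mu}_\lambda\}$ is compatible with the structure maps $L_{\lambda'}\to L_\lambda$, so that it defines a single continuous homomorphism $\hat{\mu}\colon M\to\varprojlim_\lambda L_\lambda=L$, despite the auxiliary index $i(\lambda)$ depending on $\lambda$. I expect this to follow from the uniqueness clause of the finite universal property: after passing to a common refinement of $Z_{i(\lambda)}$ and $Z_{i(\lambda')}$, both relevant composites into $L_\lambda$ extend one and the same pointed map, hence coincide. Finally $\hat{\mu}\circ\iota=\mu$ holds stagewise, and uniqueness of $\hat{\mu}$ follows because the $\mathcal{O}(G)^*$-span of $\iota(\widehat{X}^*)$ is dense in $M$, so that any two homomorphisms agreeing on $\iota(\widehat{X}^*)$ must coincide.
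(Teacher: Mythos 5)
Your proof is correct and follows essentially the same route as the paper: the paper also presents $(\widehat{X}^*,*)$ as $\varprojlim(\widehat{X_\lambda}^*,*)$ over finite pointed stages and takes $M=\varprojlim(\mathcal{O}(G)^*)^{(\widehat{X_\lambda}^*,*)}$, simply asserting that "it is easy to check" the universal property. You have filled in exactly those omitted verifications (the rank-one case via \eqref{eq10}, compatibility of the $\hat\mu_\lambda$ through the finite uniqueness clause, and density of the span of $\iota(\widehat{X}^*)$), so the two arguments coincide in substance.
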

\begin{proof}
Let $(\widehat{X}^*,*)\cong \varprojlim (\widehat{X_{\lambda}}^*,*)$, it is easy to check that the toplogical $\mathcal{O}(G)^*$-module $\varprojlim (\mathcal{O}(G)^*)^{(\widehat{X_{\lambda}}^*,*)}$ possesses the required universal properties, its dual $M^{\vee}\cong \mathcal{O}(G)^X$ is an induced $G$-module.
\end{proof}

\begin{remark}
By Proposition \ref{t3} (below) $H^1(R,k_a)$ is endowed with a structure of $\mathcal{O}(G)$-comodule, and $H^1(R,k_a)$ can be identified with $\overline{R}(k)^{\vee}$ \cite[Proposition 10]{Mikh2017} and, therefore, by duality, $\overline{R}(k)$ is a topological $\mathcal{O}(G)^*$-module. Such $\mathcal{O}(G)^*$-module structure is the same as $k\langle\langle G\rangle\rangle =End_G(\mathcal{O}(G)),\mathcal{O}(G))$-module structure on relation modules of \cite{Mag}, \cite{LM2}, since by \eqref{eq10} $k\langle\langle G\rangle\rangle \cong \mathcal{O}(G)^*$ and by \cite[Lemma 8(1)]{Mag} the action of $G(k)$ on $\overline{R}(k)$ is induced by conjugations.

It is also proved in Proposition \ref{t3} that $H^1(R,k_a)$ is naturally embedded into its injective hull, which is $\mathcal{O}(G)^Y$, and, therefore, by duality as well, we have a natural epimorphism of $\mathcal{O}(G)^*$-modules $(\mathcal{O}(G)^*)^{(\widehat{Y}^*,*)}\rightarrow \overline{R}(k)$.
\end{remark}
\begin{remark}\label{r10}
Let $C(k)\xrightarrow{\overline{d}_1} F(k)$ be the free prounipotent crossed module of Section \ref{s3.3}. Then $\overline{d}_1$ induces a homomorphism, say $\partial:\overline{C}(k)=(\mathcal{O}(G)^*)^{(\widehat{Y}^*,*)}\rightarrow \overline{R}(k)$, of topological $\mathcal{O}(G)^*$-modules, now $\partial_G:\overline{C}(k)_G=k^{(\widehat{Y}^*,*)}\rightarrow \overline{R}(k)_G=k^{(\widehat{Y}^*,*)}$ - the induced homomorphism on coinvariants is the ``fixed'' isomorphism of topological vector spaces of Remark \ref{r9}.
\end{remark}
\subsection{Non-proper presentations and their relation modules}\label{s4.1}
The following prouniptent analog of the Gasch\"{u}tz Lemma,
which was originally proved by Gasch\"{u}tz for finite groups and later
extended by Fried-Jarden to pro-finite groups \cite[Lemma 17.7.2]{FJ}, enlightens the roots of the Gasch\"{u}tz result. We do not need this result in the rest of the paper, however its analog is crucial in the study of non-proper pro-finite presentations \cite{Lub}.
\begin{proposition}\label{l8}
Let $f: G \rightarrow H$ be an epimorphism of prounipotent
groups with $rank(G)\leq e$. Let $h_1, . . . , h_e$ be a system of generators of $H$.
Then there exists a system of generators $g_1, . . . , g_e$ of $G$ such that $f(g_i) = h_i$,
$i = 1, . . . , e$.
\end{proposition}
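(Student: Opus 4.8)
The plan is to transfer the problem to the Frattini (abelianization) quotients, where it becomes an elementary statement in linear algebra, and then to lift the solution back to $G$. Write $V_G=\mathrm{Hom}(G,k_a)^{\vee}$ and $V_H=\mathrm{Hom}(H,k_a)^{\vee}$ for the Frattini quotients ($G/\overline{[G,G]}$ when $char(k)=0$ and $G/\overline{G^p[G,G]}$ in the pro-$p$-case); by the hypothesis $rank(G)\leq e$ these are finite-dimensional $k$-vector spaces with $\dim_k V_G\leq e$, and $\dim_k V_H\leq\dim_k V_G$ since $f$ is an epimorphism. As in Remark \ref{e5} and the proof of Proposition \ref{p7}, a finite subset of $G(k)$ generates $G$ (as a prounipotent group) if and only if its image spans $V_G$, and the analogous statement holds for $H$. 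Thus, denoting by $\overline{f}\colon V_G\to V_H$ the induced surjective linear map and by $\overline{h}_i\in V_H$ the images of the $h_i$ (which span $V_H$), it suffices to produce vectors $v_i\in\overline{f}^{\,-1}(\overline{h}_i)$, $i=1,\dots,e$, that span $V_G$.

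The core is the following linear-algebra lemma, a vector-space avatar of Gasch\"{u}tz's counting argument: if $\overline{f}\colon V\to U$ is a surjection of finite-dimensional $k$-vector spaces with $\dim_k V\leq e$ and $u_1,\dots,u_e$ span $U$, then there exist $v_i\in\overline{f}^{\,-1}(u_i)$ spanning $V$. To prove it I would fix arbitrary lifts $v_i^0$ of the $u_i$, set $W=\ker\overline{f}$, and choose indices $i_1,\dots,i_r$ (with $r=\dim_k U$) so that the $v^0_{i_s}$ form a complement $V'$ to $W$, giving $V=V'\oplus W$ with projection $\pi\colon V\to W$. Keeping $v_{i_s}=v^0_{i_s}$ and adjusting the remaining $e-r$ vectors by elements $w_j\in W$, one sets $v_j=v_j^0+w_j$; since $\pi(v_j)=\pi(v_j^0)+w_j$ can be made equal to any prescribed element of $W$, and since the number $e-r$ of free indices is at least $\dim_k V-r=\dim_k W$, one may arrange that these $\pi(v_j)$ include a basis of $W$. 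Then $V'\subseteq\mathrm{span}\{v_i\}$ and $\pi(\mathrm{span}\{v_i\})=W$ force $\mathrm{span}\{v_i\}=V$. The slack $e\geq\dim_k V$ is exactly what makes this work without any genericity assumption, so the argument is valid over the finite field $\mathbb{F}_p$ as well as in characteristic zero; this is the step I expect to carry the real content of the proposition.

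It remains to lift the $v_i$ from $V_G$ to $G$. First I would note that abelianization (hence the Frattini-quotient functor) is right exact, so the exact sequence $1\to\ker f\to G\xrightarrow{f} H\to 1$ yields exactness of $(\ker f)^{\mathrm{ab}}\to V_G\xrightarrow{\overline{f}} V_H\to 0$; in particular the image of $\ker f$ in $V_G$ is precisely $W=\ker\overline{f}$. Now choose any lifts $g_i^0\in G(k)$ of the $h_i$ (possible as $f$ is an epimorphism, cf. Example \ref{e1.1}); then $v_i-\overline{g_i^0}\in W$, so one can pick $n_i\in\ker f$ with image $v_i-\overline{g_i^0}$ in $V_G$ and set $g_i=g_i^0n_i$. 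By construction $f(g_i)=h_i$ and the image of $g_i$ in $V_G$ is $v_i$, so the $g_i$ span $V_G$ and therefore generate $G$. The main obstacle is the finite-field linear algebra of the previous paragraph; the reduction to and lifting from the Frattini quotient are formal once right-exactness of abelianization and the prounipotent Burnside basis principle are in hand.
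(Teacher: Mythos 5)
Your proof is correct, but it takes a genuinely different route from the paper's. The paper splits into cases: the pro-$p$ case is simply quoted from the profinite Gasch\"{u}tz lemma of Fried--Jarden \cite[Lemma 17.7.2]{FJ}, and in characteristic zero it passes through the unipotent group/nilpotent Lie algebra correspondence of Example \ref{e1.1}: for unipotent $G$ it performs your lift-adjustment inside $\mathfrak{g}$ itself, choosing a vector-space splitting $\mathfrak{g}\cong\mathfrak{h}_1\oplus\mathfrak{u}$ along $\ker\mathfrak{f}$ and correcting kernel generators by lifts $r_i\in\mathfrak{h}_1$ (the paper's $\widetilde{g}_i=\widetilde{g}'_i+r_i$ is the mirror image of your $v_j=v_j^0+w_j$, with the same slack $e-r\geq\dim_k W$ doing the real work), and for a general pronilpotent $\mathfrak{g}$ it then runs a Fried--Jarden-style compactness argument: the sets $\mathfrak{A}_i$ of suitable generating $e$-tuples over the nilpotent quotients form an inverse system of nonempty closed subsets of a linearly-compact space, whose inverse limit is nonempty, after which one exponentiates. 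You dispense with both the $\log/\exp$ transfer and the inverse-limit step by observing that $rank(G)\leq e$ makes the Frattini quotient $V_G$ finite dimensional, so generation is detected at a single finite level; this also lets you treat characteristic zero and the pro-$p$ case uniformly instead of outsourcing the latter. The price is that you invoke the Burnside basis principle for an arbitrary finitely generated prounipotent $G$, whereas the paper records it only for free prounipotent groups (Remark \ref{e5}); you should note that it follows by reducing to unipotent quotients, where $\mathfrak{h}+[\mathfrak{g},\mathfrak{g}]=\mathfrak{g}$ forces $\mathfrak{h}=\mathfrak{g}$ for nilpotent Lie algebras, together with a passage to the limit, and that your lifting step additionally uses that images of closed subgroups are closed and that quotient maps are surjective on $k$-points, both available in Example \ref{e1.1} (in the pro-$p$ case these are standard for profinite groups). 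Granting these standard facts, your argument is complete, and arguably cleaner than the published one, whose description of $\mathfrak{A}_i$ as a ``vector subspace'' generated by generating tuples is itself imprecise --- the correct object, as in Fried--Jarden, is just the closed subset of such tuples, which your one-level reduction avoids having to consider at all.
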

\begin{proof}
As we have mentioned this statement is already proven for profinite groups and in particular it is true for pro-$p$-groups, therefore, we need to consider the case, when $char(k)=0$.
Let, for first,  $f: G \rightarrow H$ be a homomorphism of unipotent groups.
By the functorial correspondence Example \ref{e1.1} between unipotent groups over a field $k$ of characteristics 0 and nilpotent Lie algebras over $k$, this $f$ gives rise to the epimorphism $\mathfrak{f}:\mathfrak{g}\rightarrow \mathfrak{h}$ of the nilpotent Lie algebras. Since $f$ is a Lie algebra homomorphism, then $f([\mathfrak{g},\mathfrak{g}])=[\mathfrak{h},\mathfrak{h}]$ and, therefore, we obtain the following commutative diagram
$$\xymatrix{0 \ar[r] &\mathfrak{u} \ar[r]\ar[d]^{\tau_{\mathfrak{g}}|_{\mathfrak{u}}} & \mathfrak{g} \ar[r]^{f} \ar[d]^{\tau_{\mathfrak{g}}} & \mathfrak{h} \ar[d]^{\tau_{\mathfrak{h}}} \ar[r]&0 \\
0 \ar[r] & \widetilde{\mathfrak{u}}   \ar[r] & \mathfrak{g}/[\mathfrak{g},\mathfrak{g}] \ar[r]^{\widetilde{\mathfrak{f}}}  & \mathfrak{h}/[\mathfrak{h},\mathfrak{h}] \ar[r]&0}.$$
Consider a vector space decomposition
$\mathfrak{g}\cong \mathfrak{h}_1\oplus \mathfrak{u}$, where $\mathfrak{f}|_{\mathfrak{h}_1}: \mathfrak{h}_1\rightarrow \mathfrak{h}$ is the isomorphism of vector spaces,
since $[\mathfrak{u},\mathfrak{u}]\subseteq[\mathfrak{g},\mathfrak{g}]\cap \mathfrak{u}$, it follows, that nilpotent Lie algebra generators of $\mathfrak{u}$ cover a basis of $\widetilde{\mathfrak{u}}$.

Since $\mathfrak{f}$ induces the epimorphism of abelianizations  $\overline{\mathfrak{f}}:\mathfrak{g}/[\mathfrak{g},\mathfrak{g}]\rightarrow \mathfrak{h}/[\mathfrak{h},\mathfrak{h}]$ and since $h_1, . . . , h_e$ generate $H$, then we can choose $\widetilde{h}_1=log(h_1), . . . , \widetilde{h}_l=log(h_l)$, where $l=dim_k(\mathfrak{h}/[\mathfrak{h},\mathfrak{h}])$, - a minimal subset, which generate $\mathfrak{h}$ as a nilpotent Lie algebra, i.e. a minimal subset of $Im\; \tau_{\mathfrak{h}}(\widetilde{h}_i)$ which form a basis in $\mathfrak{h}/[\mathfrak{h},\mathfrak{h}]$.
Let $\widetilde{g}_1,..,\widetilde{g}_l\in \mathfrak{h}_1$, such that $\mathfrak{f}(\widetilde{g}_i)=\widetilde{h}_i, i=1,..,l$, and there are exist $\widetilde{g}'_{l+1},..,\widetilde{g}'_d\in \mathfrak{u}$, where $d=d(\mathfrak{g})=dim_k(\mathfrak{g}/[\mathfrak{g},\mathfrak{g}])$, such that, $\tau_{\mathfrak{g}}|_{\mathfrak{u}}(\widetilde{g}'_i)$ generate $\widetilde{\mathfrak{u}}$.
Now we define $\widetilde{g}_{l+1},..,\widetilde{g}_d\in \mathfrak{g}$ by the rule $\widetilde{g}_i=\widetilde{g}'_i+r_i$, where $r_i\in \mathfrak{h}_1$, such that $\mathfrak{f}(r_i)=\widetilde{h}_i$, $i=l+1,..,d$. For $i>d$ we take arbitrary $\widetilde{g}_i$, such that $\mathfrak{f}(\widetilde{g}_i)=\widetilde{h}_i$.
By construction $\widetilde{g}_i$ generate $\mathfrak{g}$ and $\mathfrak{f}(\widetilde{g}_i)=\widetilde{h}_i$ and hence the result follows for unipotent groups as the elements $g_i=exp(\widetilde{g}_i)\in G(k)$ posses the required properties.

For $\mathfrak{g}$ general pronilpotent Lie algebra we will follow the ideas in the proof of \cite[Lemma 17.7.2]{FJ}. Present $\mathfrak{f}: \mathfrak{g} \rightarrow \mathfrak{h}$ as an inverse limit of epimorphisms
of nilpotent Lie algebras $\mathfrak{f}_i: \mathfrak{g}_i \rightarrow \mathfrak{h}_i$. Specifically, if $j \geq i$, then there are
epimorphisms $\psi_{ji}: \mathfrak{g}_j \rightarrow \mathfrak{g}_i$ and $\phi_{ji}: \mathfrak{h}_j \rightarrow \mathfrak{h}_i$ such that $\mathfrak{f}_i\circ \psi_{ji} = \phi_{ji} \circ \phi_j$.
In addition, there are epimorphisms $\kappa_i: \mathfrak{g} \rightarrow \mathfrak{g}_i$ and $\rho_i: \mathfrak{h} \rightarrow \mathfrak{h}_i$ such that
$\mathfrak{f}_i \circ \kappa_i = \rho_i\circ \mathfrak{f}$.

Let $\textbf{h}\in \mathfrak{h}^e$ be a $e$-tuple, that generate $\mathfrak{h}$ as the pronilpotent Lie algebra. For each $i \in I$ denote the vector subspace generated by $e$-tuples $\textbf{x} \in \mathfrak{g}^e_i$ that generate
$\mathfrak{g}_i$ and satisfy $\mathfrak{f}_i(x) = \rho_i(\textbf{h})$ by $\mathfrak{A}_i$. By the case where $\mathfrak{g}$ is unipotent, $\mathfrak{A}_i$ is
nonempty. In addition, $\mathfrak{A}_i$ is of finite dimension. If $j \geq i$ and $\textbf{y} \in \mathfrak{A}_j$ , then $\psi_{ji}(\textbf{y}) \in \mathfrak{A}_i$ and as the inverse limit $\mathfrak{A}$
of the inverse system $(A_i, \psi_{ji})_{i,j\in I}$ (since by 2. of Definition \ref{s4} $\mathfrak{A}\cong\cap (\pi_i^e)^{-1}(\mathfrak{A}_i)\neq 0$) is
nonempty closed subspace of the linearly-compact vector space $\mathfrak{g}^e\cong\varprojlim \mathfrak{g}_i^e$. Each element in $\mathfrak{A}$ defines a system of generators $\textbf{g}=(g_1, . . . , g_e)$ of $\mathfrak{g}$
with $\mathfrak{f}(\textbf{g}) = \textbf{h}$ and again $exp(\textbf{g})$ is the desired element in $G(k)$.
\end{proof}

We will need the following prounipotent analog of \cite[Proposition 2.2.]{Lub} and \cite[Proposition 2.4.]{Lub}, it has a slighter weaker form than \cite{Lub} as we do not limit to only finitely generated prounipotent groups.

\begin{lemma}\label{l9}
Let we are given $1\rightarrow R\rightarrow F\xrightarrow{\pi} G\rightarrow 1$ - a non-proper prounipotent presentation, then there is a basis $\widehat{S}^*=\widehat{X}^*\cup \widehat{Z}^*\subseteq F(k)$, such that $\widehat{Z}^*\subseteq R(k)$ and $1\rightarrow R_1\rightarrow F(X)\xrightarrow{\widehat{\pi}} G\rightarrow 1$ is the proper presentation, where $R_1$ is the image of $R$ under the natural epimorphism $F(\widehat{X}^*\cup \widehat{Z}^*)\xrightarrow{\theta} F(\widehat{X}^*)$, which sends $x\mapsto x$ if $x\in \widehat{X}^*$ and $z\mapsto  *$ if $z\in \widehat{Z}^*$.

Then $\theta$ gives rise to an exact sequence of topological $\mathcal{O}(G)^*$-modules as follows
$0\rightarrow (\mathcal{O}(G)^*)^{(\widehat{Z}^*,*)}\rightarrow \overline{R}(k)\xrightarrow{\widetilde{\theta}} \overline{R_1}(k)\rightarrow 0$.
\end{lemma}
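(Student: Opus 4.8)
The plan is to build the adapted basis directly from the abelianization of $\pi$, and then to identify the kernel of the induced map on relation modules with the coinvariants of a free module.

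\textbf{Construction of the basis.} First I would abelianize. The epimorphism $\pi:F\to G$ induces a continuous surjection $\overline{\pi}:F/[F,F](k)\to G/[G,G](k)$ of linearly-compact vector spaces whose kernel $V=R[F,F]/[F,F](k)$ is exactly the image of $\overline R(k)=R/[R,R](k)$ (since $[R,R]\subseteq[F,F]$). A closed subspace of a linearly-compact vector space splits, so I pick a closed complement $W$ with $F/[F,F](k)\cong W\oplus V$ and $\overline{\pi}|_W:W\xrightarrow{\cong}G/[G,G](k)$. Lifting a basis of $W$ to a pointed profinite family $\widehat X^*\subseteq F(k)$ and a basis of $V$ to $\widehat Z^*\subseteq R(k)$ — the latter possible precisely because $V$ is the image of $\overline R(k)$ — produces a family $\widehat X^*\cup\widehat Z^*$ lifting a basis of $F/[F,F](k)$. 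By the generation criterion for free prounipotent groups (Remark \ref{e5}), such lifts form a free basis of $F$, and by construction $\widehat Z^*\subseteq R(k)$. Letting $\theta:F(\widehat X^*\cup\widehat Z^*)\to F(\widehat X^*)$ be the retraction killing $\widehat Z^*$, the inclusion $\widehat Z^*\subseteq R=\ker\pi$ forces $\pi$ to factor as $\pi=\widehat\pi\circ\theta$, so $\widehat\pi:F(X)\to G$ is an epimorphism with kernel $R_1=\theta(R)$. Properness is then immediate, since $\overline{\widehat\pi}:F(X)/[F(X),F(X)](k)\cong W\xrightarrow{\cong}G/[G,G](k)$ realizes the inflation $H^1(G,k_a)\to H^1(F(X),k_a)$ as an isomorphism.

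\textbf{The module map and its kernel.} Because $\widehat Z^*\subseteq R$, the normal closure $K=\langle\langle\widehat Z^*\rangle\rangle_F$ lies in $R$, equals $\ker\theta$, and $\theta$ restricts to a surjection $R\to R_1$ that is split by $\iota:F(X)\hookrightarrow F$, with $\iota(R_1)\subseteq R$ as $\pi\iota=\widehat\pi$; thus $R\cong K\leftthreetimes\iota(R_1)$. On abelianizations this gives a continuous surjection $\widetilde\theta:\overline R(k)\to\overline{R_1}(k)$, and since $\theta$ induces the identity on $G\cong F/R\cong F(X)/R_1$, this $\widetilde\theta$ is a $G(k)$-homomorphism of linearly-compact spaces, hence by Corollary \ref{c3} a homomorphism of topological $\mathcal O(G)^*$-modules. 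Its kernel is $K[R,R]/[R,R]$, the $\mathcal O(G)^*$-submodule generated by the classes of $\widehat Z^*$. Using the semidirect decomposition $R\cong K\leftthreetimes\iota(R_1)$, the only commutators in $[R,R]$ that meet $K$ nontrivially modulo $[K,K]$ are those of the form $[k,\iota(r_1)]$, which in $\overline K=K/[K,K]$ are exactly the generators of the augmentation action of $R_1$; hence $\ker\widetilde\theta\cong(\overline K)_{R_1}$, the $R_1$-coinvariants of $\overline K$.

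\textbf{Freeness.} It remains to compute $(\overline K)_{R_1}$. The pair $(X\cup Z\mid Z)$ is a presentation of the free prounipotent group $F(X)$, which has $cd=1\le 2$; by the asphericity criterion of Section \ref{s3.2} this presentation is aspherical, so its relation module is free, i.e. $\overline K\cong(\mathcal O(F(X))^*)^{(\widehat Z^*,*)}$ over $\mathcal O(F(X))^*$. Taking $R_1$-coinvariants commutes with the completed free sum, and on the regular module $(\mathcal O(F(X))^*)_{R_1}\cong\mathcal O(G)^*$ because $R_1\lhd F(X)$ with $F(X)/R_1\cong G$ and the completed group algebra of a quotient is the corresponding coinvariant quotient (Quillen's formulae, Example \ref{e7}). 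Therefore $\ker\widetilde\theta\cong(\mathcal O(G)^*)^{(\widehat Z^*,*)}$, yielding the asserted exact sequence
\[
0\rightarrow (\mathcal{O}(G)^*)^{(\widehat{Z}^*,*)}\rightarrow \overline{R}(k)\xrightarrow{\widetilde{\theta}} \overline{R_1}(k)\rightarrow 0.
\]
The main obstacle is the topological bookkeeping in these last two steps: one must verify that the coinvariants functor behaves correctly on the completed free module and that $K\cap[R,R]$ is the closed subgroup cut out precisely by the $R_1$-action, so that no closure subtleties spoil the identification $\ker\widetilde\theta\cong(\overline K)_{R_1}$. The splitting of $\theta$ is exactly what makes the commutator analysis clean, while the freeness of $\overline K$ (via asphericity of the free group $F(X)$) is what turns the coinvariant computation into a free $\mathcal O(G)^*$-module.
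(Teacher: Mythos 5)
Your basis construction and your identification of $Ker\,\widetilde{\theta}$ are sound and largely track the paper: the paper likewise splits $F/[F,F](k)\cong G/[G,G](k)\oplus R[F,F]/[F,F](k)$ and lifts bases of the two summands into $F(k)$ and $R(k)$ respectively. For the kernel you proceed differently — you split $\theta$ by $\iota:F(X)\hookrightarrow F$, decompose $R\cong K\leftthreetimes\iota(R_1)$, and compute $K\cap[R,R]=[K,R]$ by commutator calculus — whereas the paper dualizes the low-degree five-term Lyndon--Hochschild--Serre sequence of $1\to N\to R\to R_1\to 1$ (with $N=K=Ker\,\theta$, using that $H^2(R_1,k_a)=0$ since $R_1$ is free) to get $0\to\overline{N}(k)_{R(k)}\to\overline{R}(k)\to\overline{R_1}(k)\to 0$. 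Both routes give $Ker\,\widetilde{\theta}\cong(\overline{K})_{R_1}$; yours is more elementary, while the paper's sidesteps the topological commutator bookkeeping you rightly flag.

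The genuine gap is your freeness step. You justify $\overline{K}\cong(\mathcal{O}(F(X))^*)^{(\widehat{Z}^*,*)}$ by observing that $(X\cup Z\mid Z)$ presents the free group $F(X)$ with $cd\le 2$, and then invoking ``the asphericity criterion of Section \ref{s3.2}'' to conclude the relation module is free. But the presentation $(X\cup Z\mid Z)$ of $F(X)$ is non-proper (the inflation $H^1(F(X),k_a)\to H^1(F(X\cup Z),k_a)$ has cokernel $k^Z$), and in the paper the non-proper cases of precisely those criteria — Propositions \ref{t3} and \ref{t2} — are themselves proved \emph{using} Lemma \ref{l9}. Your argument is therefore circular within the paper's logical architecture; note also that Proposition \ref{t3} is stated as an equivalence with $cd(G)=2$, while $cd(F(X))=1$, so the implication you need is not even its literal content. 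The paper closes this step non-circularly: with $d_0(z)=d_1(z)=1$ one has $Ker\,d_0=Ker\,d_1=N$, Proposition \ref{p1} exhibits $(N,F(\widehat{X}^*),d_1|_N)$ as a free pre-crossed module whose Peiffer subgroup is $[N,N]$, so $\overline{N}(k)$ is the abelianization of a free crossed module and hence, by its universal property together with Corollary \ref{c3} (the prounipotent analog of \cite[Proposition 7]{BH1}), a free topological $\mathcal{O}(F(X))^*$-module on $(\widehat{Z}^*,*)$; taking $R_1$-coinvariants as you do then yields $(\mathcal{O}(G)^*)^{(\widehat{Z}^*,*)}$. Replacing your criterion-based appeal with this crossed-module argument repairs the proof; the remainder of your proposal stands.
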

\begin{proof} We prove in the case $char(k)=0$, the pro-$p$-case is similar. Since the presentation is not proper, the induced epimorphism of linearly-compact topological vector spaces $\widetilde{\pi}:F/[F,F](k)\rightarrow G/[G,G](k)$ has a non-zero kernel $Ker\; \widetilde{\pi}\cong R[F,F]/[F,F](k)\cong R(k)/(R(k)\cap [F,F](k))$ and therefore we get the decomposition
$$F/[F,F](k)\cong G/[G,G](k)\oplus R[F,F]/[F,F](k).$$ Let $Z$ be a basis of $(R[F,F]/[F,F](k))^{\vee}$ and let $X$ be a basis of $(G/[G,G](k))^{\vee}$. We choose the union uplifts $\widehat{Z}^*$ to $R(k)$ (along the projection $R(k)\rightarrow R(k)/(R(k)\cap [F,F](k))$) and $\widehat{X}^*$ in $F(k)$ (in the pro-$p$-case, as always, we use $F^p[F,F]$ and $G^p[G,G]$ instead $[F,F]$ and $[G,G]$ and the uplifts are continuous sections, which exist by \cite[Proposition 1]{Se4}). By construction, $\widehat{X}^*\cup \widehat{Z}^*$ has the desired properties.

Let $N=Ker\;\theta$, then, by previous discussion, we have an exact sequence $1\rightarrow N\rightarrow R\rightarrow R_1\rightarrow 1$ of free prounipotent groups. The corresponding low degree 5-exact sequence in the Lyndon-Hochschild-Serre spectral sequence of $G$-modules takes the form
$0\rightarrow H^1(R_1,k_a)\rightarrow H^1(R,k_a)\rightarrow H^1(N,k_a)^R\rightarrow 0,$ which is equivalent by duality to
$0\rightarrow \overline{R}_1(k)^{\vee}\rightarrow \overline{R}(k)^{\vee}\rightarrow (\overline{N}(k)^{\vee})^R\rightarrow 0$
and, therefore, again, by duality, we get an exact sequence of topological $\mathcal{O}(G)^*$-modules
$$0\rightarrow \overline{N}(k)_{R(k)}\rightarrow \overline{R}(k)\rightarrow \overline{R_1}(k)\rightarrow 0.$$

By Proposition \ref{p1}
$(N,F(\widehat{X}^*),d_1|_N)$, where $d_1(\widehat{Z}^*)=1$ and $Y=Z$ in our environment, is a free prounipotent pre-crossed module,
and since $Ker\; d_0=Ker\; d_1$ we see $(C=Ker\;d_0/[Ker\;d_0,Ker\;d_1], F(X), \overline{d}_1)\cong (\overline{N}(k), F(X), \overline{d}_1)$ is the free crossed module. Its universal property, using Corollary \ref{c3}, imply that
$\overline{N}(k)\cong (\mathcal{O}(F(d))^*)^{(\widehat{Z}^*,*)}$
is a free topological $\mathcal{O}(F(d))^*$-module \cite[Proposition 7]{BH1} and, therefore, taking coinvariants, as $\mathcal{O}(F)^R\cong \mathcal{O}(G)$, we obtain that $\overline{N}(k)_R\cong (\mathcal{O}(G)^*)^{(\widehat{Z}^*,*)}$.
\end{proof}

\subsection{Relation modules and aspherical presentations}\label{s3.2}
When $G$ is a prounipotent group we can obtain a precise description of injective hulls.

 \begin{proposition} \label{p18} ~\cite[~1.11]{LM}
Let $G$ be a prounipotent group, and let $V$ be a $G$-module. Then
$V^G\otimes \mathcal{O}(G)$ is an injective $G$-module containing $V$.
Each injective $G$-module containing $V$ contains a copy of $V^G\otimes \mathcal{O}(G)$.
\end{proposition}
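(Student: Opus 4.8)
The plan is to identify $V^G\otimes\mathcal{O}(G)$ — where $V^G$ carries the trivial action and $\mathcal{O}(G)$ the regular one — as the injective hull of $V$, after which both assertions follow formally. First I would record the injectivity. By the identity $M\uparrow^G_1=M\otimes\mathcal{O}(G)$ for a trivial module $M$ (the consequence of Proposition \ref{p4} noted just below it), one has $V^G\otimes\mathcal{O}(G)\cong V^G\uparrow^G_1$; since the restriction functor $\downarrow^G_1$ is exact, its right adjoint $\uparrow^G_1$ sends injectives to injectives, and $V^G$ is injective over the trivial group (every $k$-vector space is), so $V^G\otimes\mathcal{O}(G)$ is an injective $G$-module. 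The crucial structural observation, valid precisely because $k_a$ is the only simple $G$-module, is that the socle of any $G$-module $M$ — the sum of its simple submodules — coincides with the fixed-point module $M^G$: every simple submodule is a copy of $k_a$, and the largest trivial submodule is exactly $M^G$.

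Next I would establish that $V^G\subseteq V^G\otimes\mathcal{O}(G)$ is an essential extension, where Lemma \ref{l2} does the work. Any nonzero submodule $U$ satisfies $U^G\neq 0$, while $U^G$ lies in $(V^G\otimes\mathcal{O}(G))^G=V^G\otimes\mathcal{O}(G)^G=V^G$ (using $\mathcal{O}(G)^G=k_a$ and that $V^G$ is trivial), so $U\cap V^G\supseteq U^G\neq 0$. By the same token $V^G=\mathrm{soc}(V)\subseteq V$ is essential (any nonzero $U\subseteq V$ has $\mathrm{soc}(U)\neq 0$ contained in $\mathrm{soc}(V)$), and $V\subseteq E(V)$ is essential by definition of the injective hull $E(V)$, which exists because the category of $G$-modules is locally finite by Lemma \ref{l2}, hence a Grothendieck category with injective hulls, and already has enough injectives. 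Composing essential extensions shows $V^G\subseteq E(V)$ is essential, so $E(V)$ is an injective hull of $V^G$; but $V^G\otimes\mathcal{O}(G)$ is likewise an injective essential extension of $V^G$, whence by uniqueness of injective hulls $E(V)\cong V^G\otimes\mathcal{O}(G)$. In particular $V\hookrightarrow V^G\otimes\mathcal{O}(G)$, which is the first assertion.

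For the second assertion, let $I$ be any injective $G$-module containing $V$, hence containing $V^G$. I would extend the inclusion $V^G\hookrightarrow I$ along the essential inclusion $V^G\hookrightarrow V^G\otimes\mathcal{O}(G)$, using injectivity of $I$, to a map $V^G\otimes\mathcal{O}(G)\to I$; its kernel meets $V^G$ trivially and therefore vanishes by essentiality, exhibiting $V^G\otimes\mathcal{O}(G)$ as a submodule of $I$. The main obstacle, and the only point demanding care beyond formal manipulation, is justifying the existence and uniqueness of injective hulls in this category — which I would anchor on the local finiteness supplied by Lemma \ref{l2} — together with the identification $\mathrm{soc}(M)=M^G$, which is exactly what the prounipotent hypothesis (a unique simple module $k_a$) buys us.
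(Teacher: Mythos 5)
Your proof is correct. Note that the paper itself gives no proof of Proposition \ref{p18} at all --- it is quoted from Lubotzky--Magid \cite[1.11]{LM} --- so the only comparison available is with the ingredients the paper develops around it. In Section \ref{s1.02} the paper establishes injectivity of $M_{tr}\uparrow^G_1\cong M_{tr}\otimes\mathcal{O}(G)$ exactly as you do (induction is right adjoint to the exact restriction functor), but it only embeds $V$ into the generally much larger injective $V_{tr}\otimes\mathcal{O}(G)$, via $\Delta_V$ composed with the tensor identity \eqref{e2}; that crude embedding yields enough injectives but neither the sharper target $V^G\otimes\mathcal{O}(G)$ nor the minimality assertion. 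Your route supplies both through socle/essential-extension theory: the identification $\mathrm{soc}(M)=M^G$ (every simple module is $k_a$, by the fixed-point property underlying Lemma \ref{l2}), the computation $(V^G\otimes\mathcal{O}(G))^G=V^G\otimes\mathcal{O}(G)^G=V^G$ making $V^G\subseteq V^G\otimes\mathcal{O}(G)$ essential, and uniqueness of injective hulls in the locally finite comodule category. In effect you prove the stronger statement that $V^G\otimes\mathcal{O}(G)$ is the injective hull of $V$, which is precisely how the paper exploits the proposition afterwards (the minimal resolution of Proposition \ref{p19}, where $\mathcal{E}_i(V)^G$ has zero differentials \emph{because} the hulls are minimal). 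Two cosmetic remarks, neither affecting correctness: the local finiteness you invoke for existence of hulls is really Waterhouse's fact \cite[3.3]{Wat} quoted inside the proof of Lemma \ref{l2} (the lemma proper is the prounipotent decomposition together with the fixed-point property, which is the part you genuinely need); and for the first assertion you could bypass existence of hulls altogether by extending $V^G\hookrightarrow V^G\otimes\mathcal{O}(G)$ along $V^G\hookrightarrow V$ using injectivity of $V^G\otimes\mathcal{O}(G)$ and killing the kernel by essentiality of $V^G\subseteq V$ --- the exact mirror of your own argument for the second assertion.
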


Due to the previous statement we can define the \emph{injective hull} of a $G$-module $V$ by the formula
$\mathcal{E}_0(V) = V^G\otimes \mathcal{O}(G).$ Put $\mathcal{E}_{-1}(V)=V$, and let
$d_{-1}:\mathcal{E}_{-1}(V)\rightarrow \mathcal{E}_0(V)$ be the corresponding inclusion.

\begin{lemma} \label{l13} Let $A$ be a $G$-module over a prounipotent group $G$ then the following statements are equivalent:
\begin{description}
\item[A1]{$A^*$ is a free topological $\mathcal{O}(G)^*$-module over a pointed profinite space $(\widehat{X}^*,*)$, i.e. $A^*\cong (\mathcal{O}(G)^*)^{(\widehat{X}^*,*)}$}
\item[A2]{$A$ is an induced $G$-module over $X$, i.e. $A\cong (\oplus_X k_a)\uparrow_1^G$}
\item[A3]{$H^1(G,A)=0$}
\end{description}
\end{lemma}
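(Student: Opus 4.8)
The plan is to prove the cycle $\mathbf{A2}\Rightarrow\mathbf{A3}\Rightarrow\mathbf{A2}$ and to obtain $\mathbf{A1}\Leftrightarrow\mathbf{A2}$ formally by continuous duality, which together force the three statements to be equivalent. I would first dispose of the two easy steps. For $\mathbf{A1}\Leftrightarrow\mathbf{A2}$, the free-module lemma proved just above identifies $\bigl((\mathcal{O}(G)^*)^{(\widehat{X}^*,*)}\bigr)^{\vee}$ with the induced module $\mathcal{O}(G)^X\cong(\oplus_X k_a)\uparrow^G_1$; since the continuous evaluation $\widetilde{e}\colon A^*\to A^{*\vee *}$ and $e\colon A\to A^{*\vee}$ are isomorphisms, applying $(-)^{\vee}$ to an isomorphism $A^*\cong(\mathcal{O}(G)^*)^{(\widehat{X}^*,*)}$ yields $A\cong(\oplus_X k_a)\uparrow^G_1$, and conversely applying $(-)^*$ to $A\cong(\oplus_X k_a)\uparrow^G_1$ returns $\mathbf{A1}$ by the same lemma. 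For $\mathbf{A2}\Rightarrow\mathbf{A3}$: by Proposition \ref{p4} the functor $\uparrow^G_1$ is right adjoint to restriction, hence carries the (automatically injective, being a $k$-vector space) trivial module $\oplus_X k_a$ to an injective $G$-module, and an injective module is acyclic, so $H^1(G,A)=0$.

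The substantial implication is $\mathbf{A3}\Rightarrow\mathbf{A2}$. Here I would use the injective hull $\mathcal{E}_0(A)=A^G\otimes\mathcal{O}(G)$ of Proposition \ref{p18}, with its inclusion $d_{-1}\colon A\hookrightarrow\mathcal{E}_0(A)$, and set $C=\mathrm{coker}(d_{-1})$. Applying the left-exact fixed-point functor to $0\to A\to\mathcal{E}_0(A)\to C\to 0$, and using that $\mathcal{E}_0(A)$ is injective so that $H^1(G,\mathcal{E}_0(A))=0$, produces the exact sequence $0\to A^G\to\mathcal{E}_0(A)^G\to C^G\to H^1(G,A)\to 0$. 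Since $A^G$ carries the trivial action, $\mathcal{E}_0(A)^G=(A^G\otimes\mathcal{O}(G))^G=A^G\otimes\mathcal{O}(G)^G=A^G$, and on $A^G$ the embedding $d_{-1}$ restricts, via the tensor identity \eqref{e2}, to $a\mapsto a\otimes 1$, which is exactly this identification; thus $A^G\to\mathcal{E}_0(A)^G$ is an isomorphism. Consequently the connecting map gives an isomorphism $C^G\cong H^1(G,A)$, so the hypothesis $H^1(G,A)=0$ forces $C^G=0$. Invoking Lemma \ref{l2}, that a module over a prounipotent (or pro-$p$-) group vanishes as soon as its fixed points do, I conclude $C=0$, whence $A\cong\mathcal{E}_0(A)=A^G\otimes\mathcal{O}(G)=(\oplus_X k_a)\uparrow^G_1$ with $X$ a basis of $A^G$, which is $\mathbf{A2}$.

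The main obstacle is precisely this last direction. Two points need care: one must verify that the map induced on fixed points $A^G\to\mathcal{E}_0(A)^G$ is genuinely an isomorphism (so that the connecting homomorphism $C^G\to H^1(G,A)$ is one), and one must then exploit the prounipotent-specific vanishing criterion of Lemma \ref{l2} to upgrade $C^G=0$ to $C=0$. It is this criterion, rather than any general homological input, that makes ``$H^1=0$ implies $A$ induced'' hold here while failing for arbitrary affine group schemes, so the whole argument hinges on the fact that $k_a$ is the only simple $G$-module.
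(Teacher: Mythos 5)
Your proof is correct and follows essentially the same route as the paper: duality for $\mathbf{A1}\Leftrightarrow\mathbf{A2}$, injectivity of the induced module for $\mathbf{A2}\Rightarrow\mathbf{A3}$, and for $\mathbf{A3}\Rightarrow\mathbf{A2}$ the embedding $A\hookrightarrow A^G\otimes\mathcal{O}(G)$ of Proposition \ref{p18}, the fixed-point exact sequence $0\to A^G\to V^G\to C^G\to H^1(G,A)\to 0$, and the vanishing criterion of Lemma \ref{l2}. You even supply a step the paper leaves implicit, namely that $A^G\to\mathcal{E}_0(A)^G$ is an isomorphism (via $\mathcal{O}(G)^G=k_a$), which is what lets one deduce $C^G=0$ from $H^1(G,A)=0$.
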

\begin{proof}
By duality $A1\Leftrightarrow A2$, also $A2\Rightarrow A3$, so we must prove $A3\Rightarrow A2$.

By Proposition \ref{p18}, $A$ is a submodule of its injective hull $V=A^G\otimes \mathcal{O}(G)\cong (A^G)\uparrow_1^G$, that is, $A\xrightarrow{j} V= A^G\otimes \mathcal{O}(G)$.
Let $C=Coker\;(j)$, then we have the exact sequence
$0\rightarrow A^G\rightarrow V^G\rightarrow C^G\rightarrow H^1(G,A)\rightarrow 0$.
Since $H^1(G,A)=0$, then $C^G=0$ and hence $C=0$ and, therefore, $A\cong (A^G)\uparrow_1^G$.
\end{proof}

 \begin{proposition} \label{p19}
 ~\cite[~1.12]{LM} Let $G$ be a prounipotent group and $V$ be a $G$-module. Define the
 \textbf{minimal resolution} $\mathcal{E}_i(V)$ and $ d_i : \mathcal{E}_i(V)\rightarrow \mathcal{E}_{i+1}(V)$ inductively,
 $$\mathcal{E}_{i+1}(V) = \mathcal{E}_0(\frac{\mathcal{E}_i(V)}{d_{i-1}(V))})\quad d_i=\mathcal{E}_i(V)\rightarrow
 \frac{\mathcal{E}_i(V)}{d_{i-1}(\mathcal{E}_{i-1}(V))}\rightarrow \mathcal{E}_{i+1}.$$
 Then $\{\mathcal{E}_i( V), d_i \}$ is an injective resolution of $V$ and $H^i(G, V) = \mathcal{E}_i( V)^G.$
\end{proposition}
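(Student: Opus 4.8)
The plan is to establish two independent claims: that $0\to V\to \mathcal{E}_0(V)\to\mathcal{E}_1(V)\to\cdots$ is an injective resolution, and that the complex obtained by applying $(-)^G$ has vanishing differentials, so that $H^i(G,V)\cong \mathcal{E}_i(V)^G$. For the first claim I would write $d_i=\iota_i\circ\pi_i$, where $\pi_i:\mathcal{E}_i(V)\twoheadrightarrow C_i:=\mathcal{E}_i(V)/d_{i-1}(\mathcal{E}_{i-1}(V))$ is the canonical projection and $\iota_i:C_i\hookrightarrow\mathcal{E}_0(C_i)=\mathcal{E}_{i+1}(V)$ is the injective-hull embedding of Proposition \ref{p18}. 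As $\iota_i$ is a monomorphism, $\ker d_i=\ker\pi_i=\operatorname{im} d_{i-1}$, which is exactness at each $\mathcal{E}_i(V)$; the bottom map $d_{-1}$ is the inclusion of $V$ into its hull, hence injective. Since every $\mathcal{E}_i(V)$ is injective by Proposition \ref{p18}, this is an injective resolution. This part is the purely formal construction of a minimal injective resolution and needs nothing beyond the existence of injective hulls.

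The substance of the second claim is the following lemma: if $M\hookrightarrow E$ is an \emph{essential} extension of $G$-modules, then the induced map $M^G\to E^G$ is an isomorphism. Because $k_a$ is the only simple $G$-module, $M^G$ coincides with the socle of $M$ (the sum of all simple submodules); for an essential extension every simple submodule of $E$ meets $M$ nontrivially and, being simple, lies inside $M$, so $\operatorname{soc}(E)\subseteq M$ and hence $\operatorname{soc}(E)=\operatorname{soc}(M)$. I would justify both the uniqueness of the simple module and the identification of $M^G$ with the socle by reducing to the finite-dimensional unipotent case through the decomposition $M\cong\varinjlim M_\lambda$ of Lemma \ref{l2}, which also ensures that $(-)^G$ commutes with the relevant limits.

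Now set $Z_i:=\operatorname{im} d_{i-1}$ (with $Z_0=V$), so that $\ker d_i=Z_i$ and, by the construction above, $\mathcal{E}_i(V)=Z_i^G\otimes\mathcal{O}(G)$ is precisely the injective hull of $Z_i$; thus $Z_i\hookrightarrow\mathcal{E}_i(V)$ is essential and the lemma yields an isomorphism $Z_i^G\xrightarrow{\cong}\mathcal{E}_i(V)^G$. Applying the left-exact functor $(-)^G$ to $0\to Z_i\to\mathcal{E}_i(V)\xrightarrow{\pi_i} C_i\to 0$ shows that $\pi_i^G$ has kernel $Z_i^G=\mathcal{E}_i(V)^G$, whence $\pi_i^G=0$ and therefore $d_i^G=\iota_i^G\circ\pi_i^G=0$. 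Consequently the complex $(\mathcal{E}_\bullet(V)^G,d_\bullet^G)$ has zero differentials and $H^i(G,V)=\mathcal{E}_i(V)^G$, consistently with $\mathcal{E}_i(V)^G=(Z_i^G\otimes\mathcal{O}(G))^G=Z_i^G$ obtained from $\mathcal{O}(G)^G=k_a$ together with \eqref{e2}.

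The main obstacle is making the lemma rigorous in the topological prounipotent setting: one must confirm that Proposition \ref{p18} genuinely supplies an essential extension (and not merely a minimal injective overmodule), that the socle description of $M^G$ is preserved under the direct limit of Lemma \ref{l2}, and that $(-)^G$ is compatible with that limit. Everything else is standard homological bookkeeping.
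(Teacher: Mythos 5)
Your proof is correct and follows the same skeleton as the paper's (two-line) argument: the paper simply notes $\mathcal{E}_i(V)=\mathcal{E}_0(\mathrm{Ker}\,d_i)=(\mathrm{Ker}\,d_i)^G\otimes\mathcal{O}(G)$, that the complex is an injective resolution by construction and Proposition \ref{p18}, that $\mathcal{E}_i(V)^G=(\mathrm{Ker}\,d_i)^G$, and that the fixed-point complex therefore has zero differentials. What you add, via the essential-extension/socle lemma, is precisely the justification the paper compresses away: for $d_i^G=0$ one needs not merely an abstract isomorphism $\mathcal{E}_i(V)^G\cong Z_i^G$ (which the computation $(Z_i^G\otimes\mathcal{O}(G))^G=Z_i^G\otimes 1$ gives) but the stronger fact that \emph{every} fixed vector of $\mathcal{E}_i(V)$ lies inside $Z_i=\mathrm{Ker}\,d_i$; for an arbitrary embedding $Z_i\hookrightarrow Z_i^G\otimes\mathcal{O}(G)$ with $Z_i^G$ infinite-dimensional the induced injection on fixed points need not be onto, so your insistence on essentiality is the genuine point rather than pedantry. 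The obstacle you flag closes in two lines with tools already in the paper: by Lemma \ref{l2} every nonzero $G$-module has a nonzero fixed vector; the fixed points of $Z^G\otimes\mathcal{O}(G)$ are exactly $Z^G\otimes\mathcal{O}(G)^G=Z^G\otimes 1$ (the socle), and the embedding of Proposition \ref{p18} extends the socle inclusion $Z^G\hookrightarrow Z^G\otimes 1$ (in \cite{LM} the hull is built as the injective hull of the socle), so any nonzero subcomodule $W\subseteq Z^G\otimes\mathcal{O}(G)$ satisfies $0\neq W^G\subseteq Z^G\otimes 1\subseteq Z$, i.e.\ the extension is essential and $Z^G\to(Z^G\otimes\mathcal{O}(G))^G$ is an isomorphism. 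Your reduction of the uniqueness of the simple module to Lemma \ref{l2} is likewise sound (a simple $S$ has $S^G\neq 0$, hence $S=S^G\cong k_a$), and the remaining bookkeeping (factoring $d_i=\iota_i\circ\pi_i$, exactness, left-exactness of $(-)^G$) matches the paper; only your appeal to \eqref{e2} in the final display is superfluous, since the first tensor factor already carries the trivial action, but it is harmless.
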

\begin{proof} $\mathcal{E}_i(V)=\mathcal{E}_0(Ker\; d_i)=(Ker\; d_i)^G\otimes \mathcal{O}(G)$. By construction and Proposition \ref{p18} $\{\mathcal{E}_i( V), d_i \}$ is an injective resolution, hence $\mathcal{E}_i(V)^G=(Ker\; d_i)^G$, as $\{\mathcal{E}_i( V)^G\}$ has zero differentials and, therefore, $H^i(G, V) = \mathcal{E}_i( V)^G$.
\end{proof}

\begin{proposition}\label{t3} Assume we are given a prounipotent presentation \eqref{5} of a pro-unipotent group $G$. Then
$cd(G)=2$ if and only if there is a non-canonical isomorphism of right topological $\mathcal{O}(G)^*$-modules $\overline{R}(k)\cong (\mathcal{O}(G)^*)^{(\widehat{Y}^*,*)}$.
\end{proposition}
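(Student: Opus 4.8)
The plan is to run the Lyndon--Hochschild--Serre spectral sequence of the extension $1\to R\to F\to G\to 1$ and to feed its output into Lemma \ref{l13}. First I would record that $R$, being a closed subgroup of the free prounipotent group $F=F(\widehat{X}^*,*)$, satisfies $cd(R)\le cd(F)=1$, so $R$ is free prounipotent and $H^q(R,k_a)=0$ for $q\ge 2$. Hence the spectral sequence of Proposition \ref{t3.2}, $E_2^{p,q}=H^p(G,H^q(R,k_a))\Rightarrow H^{p+q}(F,k_a)$, is concentrated in the two rows $q=0$ (where $H^0(R,k_a)=k_a$) and $q=1$. On the $q=1$ row, $H^1(R,k_a)$ carries the conjugation-induced $\mathcal{O}(G)$-comodule structure coming from $R\lhd F$, and since $R$ is free one has $H^1(R,k_a)\cong\overline{R}(k)^{\vee}$ (see \cite[Proposition 10]{Mikh2017}). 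Because $cd(F)=1$ forces $H^n(F,k_a)=0$ for $n\ge 2$, the only surviving differentials $d_2\colon E_2^{n-1,1}\to E_2^{n+1,0}$ must be isomorphisms in the relevant range; taking $n=2$ yields the key isomorphism
$$H^1(G,H^1(R,k_a))\cong H^3(G,k_a).$$

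Next I would invoke the cohomological-dimension criterion from Section \ref{s2}: $cd(G)\le 2$ if and only if $H^3(G,k_a)=0$ (because $k_a$ is the only simple $G$-module). Combined with the displayed isomorphism, this reduces the statement to the vanishing of $H^1(G,H^1(R,k_a))$. Setting $A=H^1(R,k_a)$, Lemma \ref{l13} (equivalence of \textbf{A3} and \textbf{A2}) turns this vanishing into the assertion that $A$ is an induced, hence injective, $G$-module. An induced module equals its own injective hull, so it remains only to identify that hull precisely.

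To pin the free module down over the profinite space $(\widehat{Y}^*,*)$, I would compute the injective hull of $A=H^1(R,k_a)$ by Proposition \ref{p18}, namely $A^G\otimes\mathcal{O}(G)$. Using Lemma \ref{l4} together with Remark \ref{r10}, which gives $\overline{R}(k)_G\cong k^{(\widehat{Y}^*,*)}$, one obtains $A^G=(\overline{R}(k)^{\vee})^G\cong(\overline{R}(k)_G)^{\vee}\cong k^Y$, so the injective hull is $\mathcal{O}(G)^Y$. Therefore $A$ is injective iff the canonical embedding $A\hookrightarrow\mathcal{O}(G)^Y$ is an isomorphism, and dualizing this isomorphism through continuous duality gives exactly $\overline{R}(k)\cong(\mathcal{O}(G)^*)^{(\widehat{Y}^*,*)}$; conversely, this isomorphism makes $\overline{R}(k)$ free, hence $A$ induced and $H^1(G,A)=0$. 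This closes the equivalence at the level $cd(G)\le 2$. To upgrade $\le 2$ to the stated equality $=2$, I would feed the nonzero free relation module into the five-term exact sequence of the same spectral sequence, $H^1(F,k_a)\to H^1(R,k_a)^G\xrightarrow{d_2}H^2(G,k_a)\to 0$: for a proper presentation $H^1(G,k_a)\cong H^1(F,k_a)$ forces $H^2(G,k_a)\cong H^1(R,k_a)^G\cong k^Y\neq 0$ (reducing to the proper case by Lemma \ref{l9} when necessary), so $cd(G)\ge 2$.

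The main obstacle I anticipate is keeping the two-row bookkeeping honest in the topological/dual setting: verifying that $R$ is genuinely free prounipotent (so that the rows $q\ge 2$ vanish), that the $E_2$-entries and the differential $d_2$ are compatible with the continuous duality between $G$-modules and topological $\mathcal{O}(G)^*$-modules, and that the injective-hull identification $A^G\otimes\mathcal{O}(G)\cong\mathcal{O}(G)^Y$ via Remark \ref{r10} transfers canonically through that duality. Once these compatibilities are secured, the chain of equivalences is essentially formal.
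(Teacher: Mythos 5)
Your proof is correct, but it takes a genuinely different route from the paper's for the core equivalence. The paper argues via minimal injective resolutions (Propositions \ref{p18}, \ref{p19}): it restricts the length-one $\mathcal{O}(F)$-resolution of $k_a$ to $R$, takes $R$-fixed points to get $0\rightarrow k_a\rightarrow\mathcal{O}(G)\rightarrow\mathcal{O}(G)^X\rightarrow H^1(R,k_a)\rightarrow 0$, identifies the injective hull of $H^1(R,k_a)$ with $\mathcal{O}(G)^Y$, and reads off $cd(G)$ from the vanishing of $\mathcal{E}_3(k_a)^G$; this forces a properness hypothesis (so that $\mathcal{O}(G)^X$ is the correct second term), and the non-proper case is then handled separately through Lemma \ref{l9} with a splitting and cardinality bookkeeping. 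You instead collapse the two-row Lyndon--Hochschild--Serre spectral sequence of $1\rightarrow R\rightarrow F\rightarrow G\rightarrow 1$ to get $H^1(G,H^1(R,k_a))\cong H^3(G,k_a)$ (a degeneration the paper itself exploits later, in the proof of Proposition \ref{t1}), then apply the d\'{e}vissage criterion $cd(G)\le 2\Leftrightarrow H^3(G,k_a)=0$ and Lemma \ref{l13}, identifying the hull via $A^G\cong(\overline{R}(k)_G)^\vee\cong k^Y$ from Lemma \ref{l4} and the fixed isomorphism of Remarks \ref{r9}--\ref{r10}. What your approach buys is uniformity: since the identification $A^G\cong k^Y$ is a standing assumption on all presentations considered (Remark \ref{r9}), your equivalence at the level $cd(G)\le 2$ needs no proper/non-proper case split, and Lemma \ref{l9} enters only to secure $H^2(G,k_a)\neq 0$; what the paper's computation buys is the explicit minimal resolution $0\rightarrow k_a\rightarrow\mathcal{O}(G)\rightarrow\mathcal{O}(G)^X\rightarrow\mathcal{O}(G)^Y$ and the description $H^1(R,k_a)\cong Im\,(\mathcal{O}(G)^X\rightarrow\mathcal{O}(G)^Y)$, which is reused elsewhere. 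Two small points of hygiene: your opening inference ``$R$ closed in $F$ gives $cd(R)\le cd(F)$'' should be grounded either in \cite[Corollary 2.10]{LM} (subgroups of free prounipotent groups are free, as the paper does in Proposition \ref{t2}) or in the restriction-of-injectives fact \cite[I, 4.12]{Jan} that the paper invokes; and your closing step $H^2(G,k_a)\cong k^Y\neq 0$ tacitly assumes $Y$ nonempty (in the non-proper reduction one gets $H^2(G,k_a)\cong k^{Y\setminus Z}$) --- but the paper's own proof makes exactly the same tacit assumption, so you are no worse off than the original.
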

\begin{proof} First of all, note that the proof of the isomorphism of right topological $\mathcal{O}(G)^*$-modules
$\overline{R}(k)\cong (\mathcal{O}(G)^*)^{(\widehat{Y}^*,*)}$, by continuous duality, is equivalent
to the proof of the isomorphism $Hom_{cts}(\overline{R}(k),k)\cong\mathcal{O}(G)^Y$ of right $\mathcal{O}(G)$-comodules.
\cite[Proposition 10]{Mikh2017} asserts that there is an isomorphism of $\mathcal{O}(G)$-comodules
$H^1(R,k_a)\cong Hom(\overline{R},k_a)\cong\overline{R}(k)^{\vee}$.
Thus, we must prove the isomorphism of right $\mathcal{O}(G)$-comodules
$H^1(R,k_a)\cong \mathcal{O}(G)^Y$.

Let us study the minimal injective $\mathcal{O}(G)$-resolution of the trivial
$\mathcal{O}(G)$-comodule $k_a$ (i.e. we consider a proper presentation). Since the cohomological dimension
of $F$ equals one, then the minimal $\mathcal{O}(F)$-resolution of $k_a$ takes the form
$$0\rightarrow k_a\rightarrow \mathcal{O}(F)\rightarrow \mathcal{O}(F)^X\rightarrow 0.$$ By
\cite[I, 4.12, 3.3]{Jan} we can regard this resolution as
injective $\mathcal{O}(R)$-comodule resolution. Applying the $R$-fixed points functor
and taking into account that $R$-fixed points of $\mathcal{O}(F)$ coincide with $\mathcal{O}(G)$
\cite[16.3]{Wat}, we get the exact sequence
$0\rightarrow k_a\rightarrow \mathcal{O}(G)\rightarrow \mathcal{O}(G)^X\rightarrow
H^1(R,k_a)\rightarrow 0.$
It follows from Proposition \ref{p18} that the injective
hull of $H^1(R,k_a)$ coincides with $ H^1(R,k_a)^G\otimes \mathcal{O}(G)\cong \mathcal{O}(G)^Y$, since $H^2(G,k_a)\cong H^1(R,k_a)^G\cong k^Y$.
A composition of the map
$\mathcal{O}(G)^X\rightarrow H^1(R,k_a)$
with the inclusion of $H^1(R,k_a)$ in its injective hull give rise to
minimal injective $\mathcal{O}(G)$-resolution
$0\rightarrow k_a\rightarrow \mathcal{O}(G)\rightarrow \mathcal{O}(G)^X\rightarrow\mathcal{O}(G)^Y$
of the trivial comodule $k_a$.
In particular, one has an isomorphism of $\mathcal{O}(G)$-comodules
$H^1(R,k_a)\cong Im\;(\mathcal{O}(G)^X\rightarrow \mathcal{O}(G)^Y).$
From Proposition \ref{p19} it follows that $\{\mathcal{E}_i( V)^G\}$ has zero differentials and, therefore, since nontrivial modules over prounipotent groups have nontrivial fixed points, we see that conditions $H^1(R,k_a)\cong\mathcal{O}(G)^Y$  and $cd(G)= 2$ are equivalent.

If the presentation is not proper, then, by Lemma \ref{l9} and above considerations, we get a short exact sequence of topological $\mathcal{O}(G)^*$-modules
$$0\rightarrow (\mathcal{O}(G)^*)^{(\widehat{Z}^*,*)}\rightarrow \overline{R}(k)\xrightarrow{\widetilde{\theta}} (\mathcal{O}(G)^*)^{(\widehat{Y\setminus Z}^*,*)}\rightarrow 0,$$
where we use a slightly non-precise notations, identifying $Y$ with a special new basis in $R/[R,F](k)$, but this is not important (see also Proposition \ref{t2} below). Indeed, the cardinality of $Y$ is the sum of cardinalities of the basis of $(R[F,F]/[F,F](k))^{\vee}$ and the cardinality of a basis of the complement to $(R[F,F]/[F,F](k))^{\vee}\hookrightarrow (R/[R,F](k))^{\vee}$.
This sequence splits along the embedding $(\widehat{Y\setminus Z}^*,*)\hookrightarrow (\widehat{Y}^*,*)$ and therefore
$\overline{R}(k)\cong (\mathcal{O}(G)^*)^{(\widehat{Y}^*,*)}$. We only need to check, that $Y\setminus Z$ has the same cardinality as a basis of $(R_1/[R_1,F(X)](k))^{\vee}$. By construction $1\rightarrow R_1\rightarrow F(X)\xrightarrow{\widehat{\pi}} G\rightarrow 1$ is the proper presentation and therefore $(R_1/[R_1,F(X)](k))^{\vee}\cong H^1(R,k_a)^{F(X)}\cong H^2(G,k_a).$
The 5-exact sequence in the Lyndon-Hochschild-Serre spectral sequence of a presentation $1\rightarrow R\rightarrow F\xrightarrow{\pi} G\rightarrow 1$ takes the form
$$0\rightarrow H^1(G,k_a)\xrightarrow{Inf} H^1(F,k_a)\rightarrow H^1(R,k_a)^F\rightarrow H^2(G,k_a)\rightarrow 0.$$
By construction $Coker\;(Inf)=k^Z$, and, therefore, we have $0\rightarrow k^Z\rightarrow (R/[R,F](k))^{\vee}\rightarrow (R_1/[R_1,F(X)](k))^{\vee}\rightarrow 0$ - the exact sequence of vector spaces,
which imply that $k^{Y\setminus Z}$ ($Y\setminus Z$ - in the sense of cardinality of sets) is the basis of $(R_1/[R_1,F(X)](k))^{\vee}$ and hence (by Lemma \ref{l13}) $R_1/[R_1,F(X)](k)\cong k^{(\widehat{Y\setminus Z}^*,*)}$. But since $\overline{R_1}(k)$ is free we have $\overline{R_1}(k)\cong (\mathcal{O}(G)^*)^{(\widehat{Y\setminus Z}^*,*)}$.
\end{proof}
Proposition \ref{t2} below says that a presentation \eqref{5} is aspherical if and only if its relation module is free. It is clear from this result that $(R/[R,F](k))^{\vee}\cong k^Y$ if $u_2(X|R)=0$.

\begin{proposition}\label{t2} Assume we are given a simplicial prounipotent presentation \eqref{5} of a prounipotent group $G$. Then
$u_2(X|R)=0$ if and only if there is an isomorphism of topological $\mathcal{O}(G)^*$-modules $\overline{R}(k)\cong (O(G)^*)^{(\widehat{Y}^*,*)},$ i.e. $\overline{R}(k)$ is a free topological $\mathcal{O}(G)^*$-module over the pointed profinite space $(\widehat{Y}^*,*)$.
\end{proposition}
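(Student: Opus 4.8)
The plan is to derive the equivalence from a single exact sequence of topological $\mathcal{O}(G)^*$-modules,
\begin{equation*}
0\rightarrow u_2(X|R)\rightarrow \overline{C}(k)\xrightarrow{\ \partial\ }\overline{R}(k)\rightarrow 0,\qquad(\star)
\end{equation*}
the prounipotent incarnation of the abelianized free crossed module sequence $0\rightarrow\pi_2\rightarrow\overline{H/P}\rightarrow\overline{R}\rightarrow0$ of Section \ref{r1.2}. By Remark \ref{r10}, together with Proposition \ref{p8}, the middle term is the free module $\overline{C}(k)\cong(\mathcal{O}(G)^*)^{(\widehat{Y}^*,*)}$, the map $\partial$ induced by $\overline{d}_1$ is a morphism of topological $\mathcal{O}(G)^*$-modules, and the induced map on coinvariants $\partial_G\colon k^{(\widehat{Y}^*,*)}\rightarrow k^{(\widehat{Y}^*,*)}$ is an isomorphism. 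In particular $\mathrm{coker}(\partial)$ has vanishing coinvariants, so $\partial$ is already surjective by Lemma \ref{l2}; the whole content of $(\star)$ is thus the identification of $\ker\partial$ with $u_2(X|R)$.

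First I would establish $(\star)$. Since $\overline{d}_1\colon C\rightarrow F$ is a crossed module with image $R$, the Peiffer identity CM 2) forces $u_2(X|R)=\ker\bigl(C(k)\rightarrow R(k)\bigr)$ to be central in $C(k)$, so that $1\rightarrow u_2(X|R)\rightarrow C(k)\rightarrow R(k)\rightarrow 1$ is a central extension. As a closed normal subgroup of the free prounipotent group $F$, the group $R$ satisfies $cd(R)\leq cd(F)=1$ and is therefore itself free prounipotent, whence $H^{\geq 2}(R,-)=0$. The low-degree exact sequence of this extension then collapses to $(\star)$, exactly as the freeness of the subgroup $R$ produces \cite[Proposition 4]{BH1} in the discrete setting. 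Making this precise in the linearly-compact category—verifying that every map is continuous and that $\ker\partial\cong u_2(X|R)$ as $\mathcal{O}(G)^*$-modules, and not merely as abstract vector spaces—is the step I expect to be the main obstacle.

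Granting $(\star)$, both implications are immediate. If $u_2(X|R)=0$ then $\partial$ is an isomorphism and $\overline{R}(k)\cong\overline{C}(k)\cong(\mathcal{O}(G)^*)^{(\widehat{Y}^*,*)}$ is free, as claimed. Conversely, assume $\overline{R}(k)$ is free over $(\widehat{Y}^*,*)$. Then by Lemma \ref{l13} its continuous dual is the induced, hence injective, $G$-module $\mathcal{O}(G)^Y$, so dualizing $(\star)$ and splitting the resulting inclusion of an injective module shows that $(\star)$ splits: $\overline{C}(k)\cong u_2(X|R)\oplus\overline{R}(k)$. Applying the right-exact coinvariants functor and using that $\partial_G$ is an isomorphism gives $u_2(X|R)_G=0$, and therefore $u_2(X|R)=0$ by Lemma \ref{l2}.
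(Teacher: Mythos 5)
Your proposal is correct, and its skeleton is exactly the paper's: the exact sequence $(\star)$ with free middle term $\overline{C}(k)\cong(\mathcal{O}(G)^*)^{(\widehat{Y}^*,*)}$ from Proposition \ref{p1}, the immediate forward implication, and the converse obtained by dualizing, splitting off the injective module $(\overline{R}(k))^{\vee}$, and killing the complement using the fixed coinvariants isomorphism of Remarks \ref{r9}--\ref{r10} together with Lemma \ref{l2}. The one genuine divergence is how $(\star)$ is established, and it is precisely the step you flag as your main obstacle. The paper avoids the five-term sequence entirely: since $R(k)=\mathrm{Im}\,\overline{d}_1(k)$ is a closed subgroup of the free prounipotent group $F$, it is itself free by \cite[Corollary 2.10]{LM}, so by the lifting property (Lemma \ref{l1234}) the epimorphism $C(k)\rightarrow R(k)$ admits a section $s$; your centrality computation via the Peiffer identity then yields a direct product decomposition $C(k)\cong u_2(X|R)\times sR(k)$, whence $[C(k),C(k)]=[sR(k),sR(k)]$ meets $u_2(X|R)$ trivially and $u_2(X|R)=\ker(\overline{C}(k)\rightarrow\overline{R}(k))$ on the nose. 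This stays inside the category of prounipotent groups, and the continuity issue you worry about is disposed of separately by the paper's general machinery: $\overline{C}(k)$ is a topological $\mathcal{O}(G)^*$-module by Proposition \ref{p8}, and by Corollary \ref{c3} every continuous $G(k)$-equivariant linear map of linearly compact spaces is automatically a homomorphism of topological $\mathcal{O}(G)^*$-modules, so the kernel is a closed submodule with nothing further to check. Your five-term route would also go through — centrality makes the $R$-action on $u_2(X|R)$ trivial, so the invariants $H^1(u_2(X|R),k_a)^R$ cause no loss, and the needed spectral sequence and duality are supplied by Proposition \ref{t3.2} and Lemma \ref{l4} — but it is heavier than the two-line splitting argument, which is why the paper's choice buys simplicity exactly where your version needs the most care. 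Two small remarks: your coinvariants argument for surjectivity of $\partial$ is correct but unnecessary, since $d_1(\ker d_0)=R$ by construction of \eqref{5}; and your treatment is pleasantly uniform, whereas the paper phrases the converse as a separate non-proper case even though the duality argument it gives is the same as yours.
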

\begin{proof}
Since $Im(\overline{d}_1)(k)=R(k)$ is the subgroup of $F(k)$ it is also free \cite[Corollary 2.10]{LM} and, by Lemma \ref{l1234}, $\overline{d}_1(k)$ has a section, say $s$, and therefore $C(k)\cong u_2(X|R)\times sR(k)$. Indeed, $u_2(X|R)$ is central in $C$, as for $p\in C(k), a\in u_2(X|R)$ the following equalities hold $p^{-1}a^{-1}pa=p^{-1}p^{\partial a} a=1$. But since $u_2(X|R)$ is abelian it follows that $[C(k),C(k)]=[sR(k),sR(k)]$ and therefore $[C(k),C(k)]\cap u_2(X|R)=1$ and, therefore, $u_2(X|R)=Ker\;({\overline{C}(k)\rightarrow \overline{R}(k)})$.
Now it follows from Proposition \ref{p1} that $\overline{C}(k)\cong (\mathcal{O}(G)^*)^{(\widehat{Y}^*,*)}$
(see also \cite[Corollary 3.17]{Mikh2016}) and the result is clear if the presentation is proper.

Let us consider a non-proper presentation. As $\overline{R}(k)$ is a free topological $\mathcal{O}(G)^*$-module, then its dual $(\overline{R}(k))^{\vee}$ is the injective $G$-module and, therefore, the image of the dual homomorphism $\overline{d}_1^{\vee}:(\overline{R}(k))^{\vee}\rightarrow (\overline{C}(k))^{\vee}$ is a direct summand, i.e. there is a $G$-module decomposition $(\overline{C}(k))^{\vee}\cong (\overline{R}(k))^{\vee}\oplus B$. By Remark \ref{r9}, the induced homomorphism on fixed points $(\overline{d}_1^{\vee})^G:((\overline{R}(k))^{\vee})^G\rightarrow ((\overline{C}(k))^{\vee})^G$ is the isomorphism, hence $B^G=0$. By Lemma \ref{l2}, $B=0$ and therefore $\overline{d}_1^{\vee}$ is the isomorphism, i.e. $u_2(X|R)=0$.
\end{proof}

\subsection{Tsvetkov's criterion}\label{s1.4}
Consider the diagram \eqref{8} of prounipotent groups arising from a prounipotent presentation \eqref{5} of a prounipotent group $G$ with $cd(G)=2$ and its subpresentation \eqref{6} of a prounipotent group $G_0$. In the case of positive characteristics we always assume that the base field $k=\mathbb{F}_p$ and $G$ is an affine group scheme, defined as in Example \ref{e6}.
\begin{equation}\label{8}
\xymatrix{1 \ar[r] & H \ar[r] & G_0 \ar[r]^{\pi} & G \ar[r] & 1 \\
  && F \ar[u]^{\rho} \ar[ur]_{\pi\circ\rho} \\
  & R \ar[ur] \ar[uu] & R_0 \ar[u] }
\end{equation}
The inclusion of normal subgroups $R_0\subset R$ of $F=F(X)$ induces a homomorphism of relation modules
$$\phi:\overline{R}_0(k)\rightarrow \overline{R}(k),$$
that is, a homomorphism of topological $\mathcal{O}(G_0)^*$-modules with the module structure of Corollary \ref{c1}.
Since $R_0$ is normal in $F$, $Im\;\phi$ is a topological $\mathcal{O}(G)^*$-submodule of $\overline{R}(k)$.
We remind that in the pro-$p$-case we always assume $\overline{R}=R/R^p[R,R]$ is a topological $\mathbb{F}_pG$-module (i.e. $\mathcal{O}(G)^*= \mathbb{F}_pG$).

Short exact sequence of groups
$1\rightarrow R_0\rightarrow R\rightarrow H\rightarrow 1$
gives rise to a low degree 5-exact sequence in the Lyndon-Hochschild-Serre spectral sequence
$$0\rightarrow H^1(H,k_a)\xrightarrow{Inf} H^1(R,k_a)\xrightarrow{Res} H^1(R_0,k_a)^{R/R_0}\xrightarrow{Tg} H^2(H,k_a)\rightarrow 0,$$
where $H^2(R,k_a)=0$ since $R$ is free.

The image of the restriction map $Res:H^1(R,k_a)\rightarrow H^1(R_0,k_a)$ in the 5-exact sequence coincide with $Im\;\phi^{\vee}$ and in particular, since $Im(Res)\subseteq H^1(R_0,k_a)^{R/R_0}$, $Im(Res)$ is a $G$-module.
Consider two exact sequences of $G$-modules
\begin{equation}\label{eq11}
0\rightarrow H^1(H,k_a)\xrightarrow{Inf} H^1(R,k_a)\rightarrow Im\;\phi^{\vee}\rightarrow 0
\end{equation}
and
\begin{equation}\label{eq12}
0\rightarrow Im\;\phi^{\vee}\rightarrow H^1(R_0,k_a)^{R/R_0}\xrightarrow{Tg} H^2(H,k_a)\rightarrow 0
\end{equation}

\label{s1}
\begin{proposition}\label{t1}
Suppose we are given a diagram \eqref{8} of prounipotent groups, then the following conditions are equivalent:
\begin{description}
\item[(i)]{$cd\;G_0\leq2$}
\item[(ii)]{$cd\;H\leq1$}
\end{description}
and
\begin{description}
\item[(I)]{$Im\;\phi$ is the free topological $\mathcal{O}(G)^*$-module}
\item[(II)]{$H^0(G,Im\;\phi^{\vee})\cong H^0(G_0,H^1(R_0,k_a))$ - the map arising by applying the fixed points functor to \eqref{eq12} is the isomorphism }
\end{description}
\end{proposition}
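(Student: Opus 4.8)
The plan is to translate every one of the four conditions into a statement about injectivity of a $G$- or $G_0$-module and then to play the two short exact sequences \eqref{eq11}, \eqref{eq12} against the Lyndon--Hochschild--Serre spectral sequence of Proposition \ref{t3.2} for $1\to H\to G_0\to G\to 1$. The standing hypothesis $cd\,G=2$ enters through Proposition \ref{t3}: it makes $\overline{R}(k)$ free, equivalently $H^1(R,k_a)\cong\mathcal{O}(G)^Y$ an \emph{injective} $G$-module, and this injectivity is what lets me split sequences and run dimension shifts. Using Lemma \ref{l13} I would first record the reformulations: $(\mathrm I)\Leftrightarrow Im\;\phi^{\vee}$ is an injective $G$-module $\Leftrightarrow H^1(G,Im\;\phi^{\vee})=0$; applying the same criterion over $G_0$ to the free presentation $1\to R_0\to F\to G_0\to1$ gives $(\mathrm i)\Leftrightarrow H^1(R_0,k_a)$ injective over $G_0\Leftrightarrow H^3(G_0,k_a)=0$; and the d\'evissage description of cohomological dimension gives $(\mathrm{ii})\Leftrightarrow H^2(H,k_a)=0$. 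Since $R$ is free, $H^2(R,k_a)=0$, so \eqref{eq12} exhibits $H^2(H,k_a)$ precisely as the cokernel of the inclusion $Im\;\phi^{\vee}\hookrightarrow H^1(R_0,k_a)^{R/R_0}$.

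For $(\mathrm I)\Leftrightarrow(\mathrm{II})$ I would argue inside \eqref{eq11} and \eqref{eq12}. Sequence \eqref{eq11} presents $Im\;\phi^{\vee}$ as a quotient of the injective module $H^1(R,k_a)$, so $(\mathrm I)$ is equivalent to the splitting of \eqref{eq11}, hence to $H^1(H,k_a)$ being injective; moreover the long exact $G$-cohomology of \eqref{eq11} together with $cd\,G=2$ yields the useful identity $H^1(G,Im\;\phi^{\vee})\cong H^2(G,H^1(H,k_a))$. Taking $G$-fixed points of \eqref{eq12} and using $(H^1(R_0,k_a)^{R/R_0})^G=H^1(R_0,k_a)^{G_0}=H^0(G_0,H^1(R_0,k_a))$, the resulting exact sequence is $0\to H^0(G,Im\;\phi^{\vee})\to H^0(G_0,H^1(R_0,k_a))\xrightarrow{Tg^G}H^2(H,k_a)^G\xrightarrow{\delta}H^1(G,Im\;\phi^{\vee})\to\cdots$. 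Thus $(\mathrm{II})$ is exactly the assertion $Tg^G=0$, and the whole equivalence comes down to identifying the vanishing of the edge map $Tg^G$ and the connecting map $\delta$ with the injectivity of $Im\;\phi^{\vee}$. The decisive device here is Lemma \ref{l2}: because $H^2(H,k_a)$ is a module over the prounipotent group $G$, it vanishes if and only if $H^2(H,k_a)^G$ does, which is what converts the purely $G$-invariant statement $(\mathrm{II})$ into the module-level statement controlling $\delta$, and thence into $(\mathrm I)$.

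For $(\mathrm i)\Leftrightarrow(\mathrm{ii})$ the plan is to feed the reformulations into $E_2^{p,q}=H^p(G,H^q(H,k_a))\Rightarrow H^{p+q}(G_0,k_a)$, which by $cd\,G=2$ is concentrated in the columns $p\le 2$. I would read the contributions to $H^3(G_0,k_a)$: the only possibly surviving graded pieces involve the top entry $H^2(H,k_a)$ (through the edge term $E_2^{0,2}=H^2(H,k_a)^G$ and the single differential $d_2$ leaving it) and the term $E_2^{2,1}=H^2(G,H^1(H,k_a))\cong H^1(G,Im\;\phi^{\vee})$ already analysed above. Comparing $H^3(G_0,k_a)=0$ with the vanishing of $H^2(H,k_a)$ is then a matter of showing that $d_2$ cannot cancel a nonzero $H^2(H,k_a)^G$, again invoking Lemma \ref{l2} so that the persistence of a nonzero $H^2(H,k_a)$ is already visible on the $E_2$-page through its $G$-invariants.

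The hard part, and the exact point at which Tsvetkov's original reasoning is replaced by the spectral-sequence argument of Section \ref{s2.3}, is this passage from the vanishing of a cohomology group of $G$ (or of a restricted map on invariants) to the vanishing of the underlying $H$-module $H^2(H,k_a)$: fixed points and cokernels are not exact, so the connecting map $\delta$ and the differential $d_2$ must be pinned down as injective or surjective in the correct ranges before the conditions can be identified. The two structural inputs that make this possible are the injectivity of $H^1(R,k_a)$ supplied by $cd\,G=2$, which kills all higher $G$-cohomology of the middle term and forces the relevant long exact sequences to degenerate, and Lemma \ref{l2}, which guarantees that no nonzero prounipotent module hides from its fixed points; with both in hand the edge and connecting maps are controlled and the two asserted equivalences follow.
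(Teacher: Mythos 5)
There is a genuine structural gap: you have misread what the proposition asserts. As the paper's own proof makes explicit, the claim is that the \emph{conjunction} of (i) and (ii) is equivalent to the \emph{conjunction} of (I) and (II) --- the paper runs the chain (I)$\wedge$(II) $\Leftrightarrow$ (*)$\wedge$(**) $\Leftrightarrow$ (a)$\wedge$(b)$\wedge$(c) $\Leftrightarrow$ (d)$\wedge$(e)$\wedge$(f) $\Leftrightarrow$ (i)$\wedge$(ii) --- whereas you set out to prove the two internal equivalences (I)$\Leftrightarrow$(II) and (i)$\Leftrightarrow$(ii), and neither is true on its own. Indeed, if (ii) holds then $H^2(H,k_a)=0$, the sequence \eqref{eq12} collapses to an isomorphism $Im\;\phi^{\vee}\cong H^1(R_0,k_a)^{R/R_0}$, and (II) is automatic; but by your own (correct) identity $H^1(G,Im\;\phi^{\vee})\cong H^2(G,H^1(H,k_a))$, condition (I) is then the \emph{additional} requirement $H^2(G,H^1(H,k_a))=0$, which nothing forces. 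The same configuration blocks (i)$\Leftrightarrow$(ii): with (ii) in force the spectral sequence of $1\rightarrow H\rightarrow G_0\rightarrow G\rightarrow 1$ has only the rows $q\leq 1$, and since $cd\;G=2$ kills $H^3(G,k_a)$ and $H^4(G,k_a)$, one reads off $H^3(G_0,k_a)\cong H^2(G,H^1(H,k_a))$; thus, given (ii), condition (i) holds exactly when (I) does. The two pairs are inseparably linked, and any proof must trade conditions across the pairs --- which is precisely the content of the proposition you never establish.

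Two concrete steps also fail. First, ``(I) is equivalent to the splitting of \eqref{eq11}'' is wrong in the direction you need: injectivity of the \emph{quotient} $Im\;\phi^{\vee}$ does not split a short exact sequence; splitting requires the \emph{submodule} $H^1(H,k_a)$ to be injective, i.e.\ (by Lemma \ref{l13}) $H^1(G,H^1(H,k_a))=0$ in addition to the $H^2$-vanishing which is all that (I) provides, and since $cd\;G=2$ there exist modules of injective dimension two, hence genuinely non-split sequences with injective middle term and injective quotient. Second, the claimed bridge from (II) to (I) via Lemma \ref{l2} does not exist: (II) says only that the transgression vanishes on $G$-invariants, whence exactness makes the connecting map $\delta$ \emph{embed} $H^2(H,k_a)^G$ into $H^1(G,Im\;\phi^{\vee})$; to conclude $H^2(H,k_a)^G=0$ (and only then $H^2(H,k_a)=0$ by Lemma \ref{l2}) you must already know (I). This joint use of (*) and (**) is exactly how the paper extracts its intermediate condition (c), $H^0(G,H^2(H,k_a))=0$. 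A smaller slip: $E_2^{0,2}=H^2(H,k_a)^G$ contributes to $H^2(G_0,k_a)$, not to $H^3(G_0,k_a)$ (the $q=2$ entry relevant to $H^3$ is $E_2^{1,2}=H^1(G,H^2(H,k_a))$), and $d_2^{0,2}$ may perfectly well be injective, so Lemma \ref{l2} cannot force a nonzero $H^2(H,k_a)$ to be visible in $H^3(G_0,k_a)$ --- another reason a standalone proof of (i)$\Rightarrow$(ii) cannot work. Your toolbox --- Lemma \ref{l13}, Lemma \ref{l2}, the injectivity of $H^1(R,k_a)$ coming from $cd\;G=2$ via Proposition \ref{t3}, and the truncated Lyndon-Hochschild-Serre spectral sequence --- is the right one and does yield the clean chain (I)$\wedge$(II) $\Leftrightarrow$ $\{H^2(G,H^1(H,k_a))=0$ and $H^2(H,k_a)=0\}$ $\Leftrightarrow$ (i)$\wedge$(ii), but only if the two conditions in each pair are processed together, as in the paper's proof.
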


\begin{proof}
First, we show that the conditions $(I)$ and $(II)$ are equivalent to:
\begin{description}
\item[(*)]{$H^1(G,Im\;\phi^{\vee})=0;$}
\item[(**)]{$H^0(G,Im\;\phi^{\vee})\cong H^0(G,H^1(R_0,k_a)^{R/R_0})$ - the map arising by applying the fixed points functor to \eqref{eq12} is the isomorphism }
\end{description}
For any $G$-module $A$ and a normal subgroup $H\lhd G$ one has $(A^H)^{G/H}=A^G$, taking $R/R_0\lhd G_0$, we get $H^0(G_0,H^1(R_0,k_a))\cong H^0(G,H^1(R_0,k_a)^{R/R_0})$ and therefore $(**)$ is equivalent to $(II)$.
By Lemma \ref{l13} (I) is equivalent to $H^1(G,(Im\;\phi)^{\vee})=0$ and, as $Im\;\phi^{\vee}\cong (Im\;\phi)^{\vee}$, (I) is equivalent to (*).

Now we prove that (*) and (**) are equivalent to:
\begin{description}
\item[(a)]{$H^1(G,H^1(H,k_a))\rightarrow H^1(G,H^1(R,k_a))\mbox{ is the epimorphism};$}
\item[(b)]{$H^2(G,H^1(H,k_a))\rightarrow H^2(G,H^1(R,k_a))\mbox{ is the monomorphism};$}
\item[(c)]{$H^0(G,H^2(H,k_a))=0$.}
\end{description}

This follows from the long exact sequences of cohomology of \eqref{eq11} and \eqref{eq12}:

$$0\rightarrow H^0(G,H^1(H,k_a))\rightarrow H^0(G,H^1(R,k_a))\rightarrow H^0(G,Im\;\phi^{\vee})$$
$$\rightarrow H^1(G,H^1(H,k_a))\rightarrow H^1(G,H^1(R,k_a))\rightarrow \mathbf{H^1(G,Im\;\phi^{\vee})}$$
$$\rightarrow H^2(G,H^1(H,k_a))\rightarrow H^2(G,H^1(R,k_a))\rightarrow H^2(G,Im\;\phi^{\vee})\rightarrow \ldots$$

and

$$0\rightarrow \mathbf{H^0(G,Im\;\phi^{\vee})}\rightarrow \mathbf{H^0(G,H^1(R_0,k_a)^{R/R_0})}\rightarrow H^0(G,H^2(H,k_a))$$
$$\rightarrow \mathbf{H^1(G,Im\;\phi^{\vee})}\rightarrow H^1(G,H^1(R_0,k_a)^{R/R_0})\rightarrow H^1(G,H^2(H,k_a))^{R/R_0}\rightarrow \ldots$$
The next step is to show that conditions  (a), (b), (c) are equivalent to:
\begin{description}
\item[(d)]{$d_2^{1,1}: H^1(G,H^1(R,k_a))\rightarrow H^{3}(G,k_a)\mbox{ is the epimorphism};$}
\item[(e)]{$d_2^{2,1}: H^2(G,H^1(R,k_a))\rightarrow H^{4}(G,k_a)\mbox{ is the monomorphism};$}
\item[(f)]{$cd\;(H)\leq 1.$}
\end{description}

By Lemma \ref{l2}, (c) is equivalent to $H^2(H,k_a)=0$ and hence to $cd\;H\leq1$ (i.e. $H$ is free), therefore (c) is equivalent to (f).
We arrange (a) and (b) into the commutative diagram
$$\xymatrix{
  H^p(G,H^1(H,k_a)) \ar[rr]^{Inf^*} \ar[dr]_{\widetilde{d}_2^{p,1}}
                &  &    H^p(G,H^1(R,k_a)) \ar[dl]^{d_2^{p,1}}    \\
                & H^{p+2}(G,k_a)               },$$
where $Inf^*$ is induced by the inflation map $Inf:H^1(H,k_a)\rightarrow H^1(R,k_a)$, $Inf^*$ coincides with (a), when $p=1$, and with (b), when $p=2$. The maps $\widetilde{d}_2^{p,1}$ and $d_2^{p,1}$ are the differentials in the spectral sequences $\widetilde{E}_2^{p,q}$, $E_2^{p,q}$ of group extensions $1\rightarrow H\rightarrow G_0\rightarrow G\rightarrow 1$ and $1\rightarrow R\rightarrow F\rightarrow G\rightarrow 1,$ respectively.
Lets take $G_0$-module resolution \eqref{eq5} of the trivial module $k_a$
and $F$-module resolution \eqref{eq5} of the trivial module $k_a$, these resolution are also injective $H$ and $R$-module resolutions of the trivial module $k_a$.
Then $Inf$ is induced on the cohomology by the inclusion of the complexes \cite[6.10]{Jan}
$$(k_a\otimes \mathcal{O}(G_0)^{\otimes(q+1)})^H\rightarrow (k_a\otimes \mathcal{O}(F)^{\otimes(q+1)})^R.$$
Let $\gamma_0: \widetilde{E}_0^{p,q}\rightarrow E_0^{p,q}$ be the induced inclusion of the double complexes
$$(k_a\otimes \mathcal{O}(G_0)^{\otimes(q+1)})^H\otimes \mathcal{O}(G)^{\otimes p}\rightarrow (k_a\otimes \mathcal{O}(F)^{\otimes(q+2)})^R\otimes \mathcal{O}(G)^{\otimes p},$$
then $\gamma_0$ commute with $d_0^{p,q}$, $\widetilde{d}_0^{p,q}$, $d_1^{p,q}$, $\widetilde{d}_1^{p,q}$ and, therefore, by \cite[VIII,9]{HSJ}, $\gamma_0$ induces a homomorphism of spectral sequences $\gamma_n:\widetilde{E}_n^{p,q}\rightarrow E_n^{p,q}$ and in particular $\gamma_2:H^p(G,H^1(H,k_a))\rightarrow H^p(G,H^1(R,k_a))$. By construction, $\gamma_2$ coincides with $Inf^*$ (an explicit mapping is given in Remark \ref{r3.1} and therefore $\widetilde{d}^{p,1}_2=d_2^{p,1}\circ Inf^*$.

The Lyndon-Hochschild-Serre spectral sequence of the group extension
$$1\rightarrow R\rightarrow F\rightarrow G\rightarrow 1$$
degenerates (since $R,F$ are free), hence $d_2^{p,1}$ is an isomorphism and therefore conditions (a), (b) are equivalent to (d), (e).

Let us also consider the Lyndon-Hochschild-Serre spectral sequence $E_2^{p,q}=H^p(G,H^q(H,k_a))$ of the group extension
$1\rightarrow H\rightarrow G_0\rightarrow G\rightarrow 1$
which also degenerates for $q>1$, since $H$ is free and, therefore, $E_3^{p,q}\cong E_{\infty}^{p,q}.$

A filtration of $H^{p+q}(G_0,k_a)$ provided by $E_{\infty}^{p,q}$ is given by the formulae as follows
$$E_{\infty}^{p,q}\cong gr_pH^{p+q}(G,k_a)=F^pH^{p+q}(G,k_a)/F^{p+1}H^{p+q}(G,k_a)$$
and therefore (d),(e),(f) are equivalent to $H^3(G_0,k_a)=0,$ $cd\;H\leq 1,$ i.e. conditions (i), (ii) of Proposition \ref{t1}.
\end{proof}

\subsection{Proof of Theorem \ref{t4}}\label{s3.5}
We use notations of sections \ref{s2.2}, \ref{s1.4}, i.e. $R_0(k)=\langle \tau(\widehat{Y}_1^*)\rangle_{F(k)}$, where $Y_1\subset Y$.
\begin{proof} The inclusion of normal subgroups $R_0\subset R$ induces an abelianization homomorphism $\phi:\overline{R_0}(k)\rightarrow \overline{R}(k)$.
First of all, note that
$ Im\;\phi\cong (\mathcal{O}(G)^*)^{(\widehat{Y_1}^*,*)}$
is a free topological module on the profinite space $(\widehat{Y_1}^*,*)$. By Proposition \ref{t2}, $\overline{R}(k)\cong (\mathcal{O}(G)^*)^{(\widehat{Y}^*,*)}$ and $Im\; \phi$ is a closed submodule of $\overline{R}(k)$ generated as a topological $\mathcal{O}(G)^*$-submodule by $(\widehat{Y_1}^*,*)$.
Now we want to show, that \eqref{eq12} induces the isomorphism $H^0(G,Im\;\phi^{\vee})\cong H^0(G_0,H^1(R_0,k_a)).$

The last isomorphism is equivalent to the isomorphism $(Im\;\phi^{\vee})^{G}\cong H^1(R_0,k_a)^{G_0}$
of $k$-spaces arising from the embedding of \eqref{eq12}
 and by continuous duality to the isomorphism
$((Im\;\phi^{\vee})^G)^*\cong ((H^1(R_0,k_a))^{G_0})^*$.
By Lemma \ref{l4} that is equivalent to the isomorphism $(Im\;\phi^{\vee})^*_G\cong (H^1(R_0,k_a)^*)_{G_0}.$ But $Im\;\phi^{\vee}\cong (Im\;\phi)^{\vee}$ and so we must check that taking $G$-coinvariants of $\phi$ imply the isomorphism
$(Im\;\phi)_G\cong R_0/[R_0,F]$.
Indeed,
$(Im\;\phi)_G\cong ((\mathcal{O}(G)^*)^{(\widehat{Y_1}^*,*))}_G\cong k^{(\widehat{Y_1}^*,*)}$ and $R_0/[R_0,F]\cong k^{(\widehat{Y_1}^*,*)}$, as $R_0$ is taken from \eqref{6}.

We are now ready to apply Proposition \ref{t1} (which shows that $cd\; G_0\leq 2$) and Propositions \ref{t3},  \ref{t2} in order to show that $(X|R_0)$ is aspherical.
\end{proof}

\section{An application: discrete group presentations}\label{s6}

In the recent paper Berrick and Hillman have obtained  the following beautiful consequence \cite[Corollary 4.8]{BH}.

\begin{proposition} The Whitehead Conjecture is equivalent to the conjecture that the fundamental group of a finite subcomplex of a contracnible 2-complex has rational cohomological dimension at most 2.
\end{proposition}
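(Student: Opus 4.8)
The plan is to prove the two implications separately, with the forward one essentially formal and the reverse one carrying all the weight. First I would dispose of the easy direction. Assume Whitehead's conjecture and let $L$ be a finite connected subcomplex of a contractible $2$-complex $K$. Since $K$ is contractible it is in particular aspherical, so by the conjecture $L$ is aspherical; being a $2$-dimensional $K(G,1)$ with $G=\pi_1(L)$, it yields a free $\mathbb{Z}G$-resolution of $\mathbb{Z}$ of length $2$, whence $\mathrm{cd}(G)\le 2$. Tensoring that resolution with $\mathbb{Q}$ (flat over $\mathbb{Z}$, carrying free $\mathbb{Z}G$-modules to free $\mathbb{Q}G$-modules) gives $\mathrm{cd}_{\mathbb{Q}}(G)\le 2$, so the rational cohomological dimension bound holds.

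For the reverse direction I would first invoke the standard reduction of Whitehead's conjecture to the statement that every finite connected subcomplex $L$ of a \emph{finite contractible} $2$-complex $K$ is aspherical: a nonzero class in $\pi_2$ of a subcomplex of an aspherical complex is already supported on a finite subcomplex, and one passes to a contractible ambient complex. The heart is then the implication $\mathrm{cd}_{\mathbb{Q}}(\pi_1 L)\le 2 \Rightarrow L$ aspherical for such $L$. Write $G=\pi_1(L)$ and let $\widetilde L$ be the universal cover. Since $\widetilde L$ is simply connected its cellular chain complex is exact except at the ends, giving an exact sequence of free $\mathbb{Q}G$-modules $0 \to H_2(\widetilde L;\mathbb{Q}) \to C_2 \to C_1 \to C_0 \to \mathbb{Q}\to 0$ in which $\mathrm{im}(C_2\to C_1)$ is the second syzygy $\Omega^2(\mathbb{Q})$ of the trivial module. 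Because $\pi_2(L)=H_2(\widetilde L;\mathbb{Z})$ is a subgroup of the free abelian group $C_2(\widetilde L;\mathbb{Z})$ it is torsion free, so $L$ is aspherical if and only if $H_2(\widetilde L;\mathbb{Q})=0$; everything has become a purely rational question.

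Now I would exploit the bound. If $\mathrm{cd}_{\mathbb{Q}}(G)\le 2$ then $\Omega^2(\mathbb{Q})$ is projective, so the short exact sequence $0\to H_2(\widetilde L;\mathbb{Q})\to C_2\to \Omega^2(\mathbb{Q})\to 0$ splits and exhibits $H_2(\widetilde L;\mathbb{Q})$ as a direct summand of the finitely generated free module $C_2$; hence it is a finitely generated projective $\mathbb{Q}G$-module. It remains to show that this projective module is zero, and this is exactly where I expect the main obstacle to lie and where the contractibility of the ambient complex must genuinely be used. The plan here is to pin down the Hattori--Stallings rank of $H_2(\widetilde L;\mathbb{Q})$, equivalently to pass to the group von Neumann algebra and compute the second $L^2$-Betti number $b_2^{(2)}(L)$ (which is strictly positive for any nonzero finitely generated projective $\mathbb{Q}G$-module by faithfulness of the von Neumann dimension), and then to use the embedding $L\hookrightarrow K$ into a contractible complex together with an Euler-characteristic comparison to force that rank to vanish.

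Combined with projectivity this gives $H_2(\widetilde L;\mathbb{Q})=0$, hence $\pi_2(L)=0$, so $L$ is aspherical and the reverse implication follows. The two genuinely nontrivial points are therefore the reduction to a contractible ambient complex and the vanishing of the rank of the projective second homotopy module; the intervening homological algebra (syzygies, splitting, torsion-freeness) is routine.
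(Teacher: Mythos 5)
First, a point of reference: the paper offers no proof of this proposition at all --- it is imported verbatim as \cite[Corollary 4.8]{BH} --- so your attempt has to be measured against the Berrick--Hillman argument itself. Your easy direction and the intervening homological algebra are correct: Whitehead plus asphericity of $L$ gives a length-$2$ free $\mathbb{Z}G$-resolution, hence $\mathrm{cd}_{\mathbb{Q}}(G)\le 2$; conversely $\mathrm{cd}_{\mathbb{Q}}(G)\le 2$ makes $Z_1=\ker(C_1\to C_0)$ projective by dimension shifting, splits $0\to H_2(\widetilde L;\mathbb{Q})\to C_2\to Z_1\to 0$, and torsion-freeness of $\pi_2(L)\cong H_2(\widetilde L;\mathbb{Z})$ correctly reduces asphericity to $H_2(\widetilde L;\mathbb{Q})=0$. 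But your reduction is misstated: the available reduction is to finite subcomplexes of \emph{contractible} $2$-complexes with the ambient complex infinite in general (lift $L$ to the contractible universal cover $\widetilde K$ of the aspherical $K$ and use that a nonzero $\pi_2$-class of the lift is carried by a finite subcomplex of $\widetilde K$); a reduction to a \emph{finite} contractible ambient complex is not known --- even Howie's dichotomy produces, in its second case, only an infinite contractible ambient complex as the union of a tower --- and your later plan to run an ``Euler-characteristic comparison'' using $K$ is meaningless when $K$ is infinite.

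The genuine gap is the final vanishing step, which you leave as a plan, and the proposed tool fails. Your positivity claim is fine (a nonzero finitely generated projective $\mathbb{Q}G$-module has positive Kaplansky/von Neumann trace, by faithfulness of the trace on $\mathcal{N}(G)$), but Euler characteristics cannot produce the needed contradiction: since $H_2(L;\mathbb{Q})=0$, one has $\chi(L)=1-\beta_1(L;\mathbb{Q})$, and Atiyah's formula gives only $b_2^{(2)}(L)=b_1^{(2)}(L)-b_0^{(2)}(L)+1-\beta_1(L;\mathbb{Q})$, which has no reason to vanish. What actually closes the argument --- and is where contractibility of $K$ genuinely enters --- is different: because $L$ is a subcomplex of $K$, every $2$-cycle of $L$ is a $2$-cycle of $K$, so $H_2(L;\mathbb{Q})\subseteq H_2(K;\mathbb{Q})=0$; the Cartan--Leray (Hopf-type) exact sequence $0\to H_3(G;\mathbb{Q})\to(\pi_2(L)\otimes\mathbb{Q})_G\to H_2(L;\mathbb{Q})$, together with $H_3(G;\mathbb{Q})=0$ from $\mathrm{cd}_{\mathbb{Q}}(G)\le 2$, then forces the $G$-coinvariants of your projective module $\Pi=H_2(\widetilde L;\mathbb{Q})$ to vanish, i.e.\ its augmentation rank is zero. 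To conclude $\Pi=0$ you must pass from the augmentation rank to the Kaplansky trace, and the equality of these two ranks is precisely the (weak) Bass conjecture --- a nontrivial theorem, due to Eckmann, for groups with $\mathrm{cd}_{\mathbb{Q}}\le 2$, and exactly the input on which the Berrick--Hillman proof rests. So you correctly located where the difficulty sits, but the two missing ideas are the vanishing of coinvariants via $H_2(L;\mathbb{Q})=0$ and the Bass-conjecture identification of traces; an Euler-characteristic count supplies neither.
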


This result eliminate the crucial role of finite subcomplexes of contractible 2-complexes in the study of asphericity.

From now and on, we use the standard notation of the section \ref{s2.2} with the only difference that for brevity we will identify in the notation the normal subgroup and the relations generating it.
As always, we replace arbitrary contractible 2-complex by a complex of a presentation $(X|Y)$ of the trivial group. 
As in Section \ref{r1.1} consider its combinatorial model $S_1K(X|Y)$.
 In what follows we shall prove that the Bousfield-Kan $R$-completion $R_{\infty}S_1K(X|Y)$ is an aspherical space.

We remind (see Section \ref{s0}) that $R_{\infty}S_1K(X|Y)$  may be obtained as the classifying space $\overline{W}((GS_1K(X|Y))^{\wedge}_R)$ of the dimension-wise $R$-nilpotent completion of the Kan loop-group $G$ of $S_1K(X|Y)$. 
Recal, by Section \ref{r1.1} the geometric realization of $GS_1K(X|Y)$ is homotopy equivalent to $\Omega K(X|Y)$.
Since $(X_1|Y_1)$ is a finite presentation it follows that the dimension-wise $R$-nilpotent completion $(GS_1K(X_1|Y_1))^{\wedge}_R$ is homotopy equivalent to the corresponding pro-unipotent simplicial group presentation $(X_1|Y_1)_u$ \cite[(3.3)]{Mikh2016}, see Section \ref{s2.1}. 

\begin{corollary} \label{c4} Let $(X_1|Y_1)$ be a finite subpresentation of a contractible presentation $(X|Y)$ and $R=k$ is a field of zero characteristics or $R=\mathbb{F}_p$, then $R_{\infty}S_1K(X_1|Y_1)$ is aspherical.
\end{corollary}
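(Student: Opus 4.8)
The plan is to deduce Corollary~\ref{c4} directly from the main result Theorem~\ref{t4} together with the homotopy-theoretic identifications assembled in Sections~\ref{s0}, \ref{r1.1} and \ref{s2.1}. First I would recall the chain of equivalences that reduces the asphericity of the $R$-completed space to a purely algebraic statement about a prounipotent (resp.\ pro-$p$) presentation. By the Bousfield--Kan three-step construction recalled in Section~\ref{s0}, one has $R_{\infty}S_1K(X_1|Y_1)=\overline{W}\big((GS_1K(X_1|Y_1))^{\wedge}_R\big)$, and since $\overline{W}$ and the Kan loop functor $G$ are adjoint and realize a Quillen equivalence, the homotopy groups of $R_{\infty}S_1K(X_1|Y_1)$ are governed by the simplicial prounipotent group $(GS_1K(X_1|Y_1))^{\wedge}_R$. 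Because $(X_1|Y_1)$ is a \emph{finite} presentation, the cited result \cite[(3.3)]{Mikh2016} identifies this dimension-wise completion, up to homotopy, with the simplicial prounipotent presentation $(X_1|Y_1)_u$ of Section~\ref{s2.1}, so that asphericity of $R_{\infty}S_1K(X_1|Y_1)$ is equivalent to the vanishing $u_2(X_1|Y_1)=0$ in the sense of Section~\ref{s3.3}.

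Next I would produce the ambient aspherical presentation to which Theorem~\ref{t4} applies. The enclosing presentation $(X|Y)$ is a presentation of the trivial group, and its standard complex $K(X|Y)$ is contractible, hence aspherical. Running the same completion and Quillen-equivalence argument on the full presentation $(X|Y)$ shows that its associated prounipotent (resp.\ pro-$p$) simplicial presentation $(X|Y)_u$ is aspherical, i.e.\ $u_2(X|Y)=0$. Here I would note that $K(X|Y)$ being contractible forces $\pi_2(K(X|Y))=0$, and under the identification $\pi_2(K(X|Y))\cong\pi_1(F^{(1)}_{\bullet})$ of Proposition~\ref{l3} this discrete vanishing, transported through the completion functors, yields the prounipotent asphericity $u_2(X|Y)=0$; alternatively, the fact that the trivial group manifestly has a free (indeed trivial) relation module makes the criterion of Proposition~\ref{t2} immediate.

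With both ingredients in place, the conclusion is a direct invocation of Theorem~\ref{t4}. The finite subpresentation $(X_1|Y_1)\subset(X|Y)$ is, by construction, a subpresentation in the precise sense of Section~\ref{s2.2}: choosing the defining-relation basis $Y$ of the ambient presentation and the subset $Y_1\subset Y$ corresponding to the finite subcomplex gives exactly the subpresentation diagram \eqref{6} sitting inside \eqref{5}. Since $(X|Y)_u$ is aspherical, Theorem~\ref{t4} asserts that its subpresentation $(X_1|Y_1)_u$ is also aspherical, i.e.\ $u_2(X_1|Y_1)=0$. Tracing this back through the equivalences of the first paragraph gives that $R_{\infty}S_1K(X_1|Y_1)$ is aspherical, as claimed.

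The main obstacle I anticipate is not the final logical step but the careful verification that the homotopy-theoretic identification $(GS_1K(X_1|Y_1))^{\wedge}_R\simeq(X_1|Y_1)_u$ is compatible with the inclusion of subpresentations, so that the subpresentation structure of \eqref{6} inside \eqref{5} really matches the inclusion of finite subcomplexes at the space level. In particular one must check that the finiteness hypothesis, which is essential for applying \cite[(3.3)]{Mikh2016}, is genuinely used only for the subpresentation $(X_1|Y_1)$ and that the ambient (possibly infinite) presentation $(X|Y)$ still yields a well-defined aspherical prounipotent presentation; this is where the extension of the theory to arbitrary generating and relation sets developed in Sections~\ref{s2}--\ref{s2.2} does the necessary work, and I would make sure the asphericity of $(X|Y)_u$ is established without invoking finiteness.
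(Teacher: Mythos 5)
Your first paragraph reproduces the paper's reduction correctly, and invoking Theorem \ref{t4} is in principle available (the paper itself mentions it parenthetically as an alternative). The genuine gap is in your second paragraph, where you establish asphericity of the ambient prounipotent presentation $(X|Y)_u$. The claim that the discrete vanishing $\pi_2(K(X|Y))=0$ can be ``transported through the completion functors'' is exactly what cannot be assumed: Bousfield--Kan completion does not preserve two-dimensional homotopy (the paper stresses, contra Bousfield--Kan, that the rational case does not ``take care of itself''), and the identification of $(GS_1K(X|Y))^{\wedge}_R$ with $(X|Y)_u$ via \cite[(3.3)]{Mikh2016} requires a \emph{finite} presentation, whereas the ambient $(X|Y)$ is in general infinite. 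Your fallback --- that the trivial group ``manifestly has a free (indeed trivial) relation module'' --- is wrong as stated: since $G$ is trivial one has $R_u=F$, so the relation module is $\overline{R}(k)=F/[F,F](k)\cong k^{(\widehat{X}^*,*)}$, which is far from trivial, and Proposition \ref{t2} demands that it be free over the pointed profinite space $(\widehat{Y}^*,*)$ of \emph{defining relations}. Verifying this is precisely the nontrivial step: one must show that the images of the relations $r_y$ form a topological basis of $F/[F,F](k)$, and this is where contractibility (not just triviality of the group) enters --- discrete asphericity forces $\overline{R}\cong\mathbb{Z}G^Y$, i.e.\ $\mathbb{Z}^X\cong\Phi(X)/[\Phi(X),\Phi(X)]\cong\mathbb{Z}^Y$ on the relation images, which is exactly how the paper's proof opens. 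Without this check the relation set does not even satisfy the constraint $(R/[R,F](k))^{\vee}\cong k^Y$ of Remark \ref{r9}, so $(X|Y)_u$ is not a simplicial presentation in the sense of Definition \ref{d7}, let alone an aspherical one.

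A second, unresolved gap is the shape of the subpresentation: in Section \ref{s2.2} a subpresentation \eqref{6} of \eqref{5} keeps the \emph{full} generating space $\widehat{X}^*$ and shrinks only the relations, whereas your finite subcomplex has $X_1\subsetneq X$. Applying Theorem \ref{t4} verbatim therefore yields asphericity of $(X|Y_1)_u$, not of the finitely generated presentation $(X_1|Y_1)_u$ that you need for the Bousfield--Kan identification --- which is the one place finiteness is actually used. You flag this compatibility issue in your last paragraph but do not close it, and it is not cosmetic: passing from $(X|Y_1)_u$ to $(X_1|Y_1)_u$ amounts to splitting off the free factor $F(\widehat{X\setminus X_1}^*,*)$ (Lemma \ref{l7}) and comparing relation modules over the two ambient free groups, which needs an argument. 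The paper's actual proof bypasses both gaps at once: contractibility makes the relations $Y$ a free generating set of $F$, i.e.\ $F(X)(k)=F(Y)(k)$, whence $F(X_1)/(Y_1)_u\cong F(\widetilde{Y}_1)$ is itself a \emph{free} prounipotent group with $cd\leq 1$, and asphericity of $(X_1|Y_1)_u$ then follows from the relation-module/cohomological-dimension criterion (Propositions \ref{t3}, \ref{t2}) without ever constructing an ambient aspherical prounipotent presentation of the trivial group. To repair your route you would need to carry out the basis verification above to get $u_2(X|Y)=0$ via Proposition \ref{t2}, and then supply the missing reduction from $(X|Y_1)_u$ to $(X_1|Y_1)_u$; the paper's direct argument renders both steps unnecessary.
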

\begin{proof} Since $(X|Y)$ is contractible it is also aspherical, in our case $R=\Phi(X)$ and $R/[R,R]$ is forced to be isomorphic to $\mathbb{Z}G^Y$ (see Section \ref{r1.2}), therefore we obtain a non-canonical isomorphism
$ \mathbb{Z}^X\cong \Phi(X)/[\Phi(X),\Phi(X)]\cong \mathbb{Z}^Y.$
Now we consider the corresponding prounipotent presentation $(\widehat{X}^*|Y)$ of the trivial prounipotent group, so $R_u=\langle Y\rangle_F=F(\widehat{X}^*,*)$ is the Zariski closure of defining relations $r_y\in\Phi(X), y\in Y$ in the the free prounipotent group $F=F(X)(k)$.

Since the images of $r_y, y\in Y$ generate $F/[F,F]$ and are linearly independent, it follows from free prounipotent group construction (see Sections \ref{s1.2} and \ref{s2}, Proposition \ref{p4}) that they are generators of $F$, i.e $F(X)(k)= F(Y)(k) $. 
By assumption, $X_1$ and $Y_1$ are finite, so we may choose elements of $Y_1$ as generators of $F(X_1)(k)$ and therefore $F(X_1)/(Y_1)_u\cong F(\widetilde{Y}_1)$ for some $\widetilde{Y}_1\subseteq X_1$, i.e. $(X_1|Y_1)_u$ is a presentation of a free prounipotent group, hence $cd\; F(X_1)/(Y_1)_u\leq 1$. 
By Propositions \ref{t3}, \ref{t2} (or by Theorem \ref{t4}), $(X_1|Y_1)_u$ is an aspherical simplicial prounipotent presentation and therefore $\overline{W}(X_1|Y_1)_u$ is an aspherical simplicial set in the common sense.
But $X_1$ and $Y_1$ are finite and, as mentioned above, $R_{\infty}S_1K(X_1|Y_1)\simeq\overline{W}(X_1|Y_1)_u$ is a homotopy equivalence and the result follows.
\end{proof}
The arithmetic square argument imply the following consequence.
\begin{corollary}\label{c5} Let $(X_1|Y_1)$ be a finite subpresentation of a contractible presentation $(X|Y)$ then $\mathbb{Z}_{\infty}S_1K(X_1|Y_1)$ is aspherical.
\end{corollary}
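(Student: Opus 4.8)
The plan is to feed Corollary \ref{c4} into the Bousfield--Kan fracture square. Writing $Y=S_1K(X_1|Y_1)$, the arithmetic square recalled in Section \ref{s0} (see \cite[Lemma 6.2, Lemma 6.3, Lemma 8.1]{BK}) presents the integral completion $\mathbb{Z}_\infty Y$ as the homotopy pullback of $\mathbb{Q}_\infty Y \to \mathbb{Q}_\infty\bigl(\prod_{p}(\mathbb{F}_p)_\infty Y\bigr) \leftarrow \prod_{p}(\mathbb{F}_p)_\infty Y$, so it suffices to understand each of the three corners. First I would apply Corollary \ref{c4} with $R=\mathbb{Q}$ and with $R=\mathbb{F}_p$: since $(X_1|Y_1)$ is a finite subpresentation of a contractible presentation, the proof of Corollary \ref{c4} shows that $(X_1|Y_1)_u$ presents a \emph{free} prounipotent group over $\mathbb{Q}$ and a \emph{free} pro-$p$-group over $\mathbb{F}_p$. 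Hence $\mathbb{Q}_\infty Y$ and each $(\mathbb{F}_p)_\infty Y$ are not merely aspherical but are $K(\pi,1)$'s for free prounipotent, respectively free pro-$p$, fundamental groups (cohomological dimension $\leq 1$), and therefore the product $\prod_p(\mathbb{F}_p)_\infty Y\simeq K(\prod_p F_p,1)$ is aspherical as well.

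The key step, which I expect to be the main obstacle, is to show that the bottom-right corner $\mathbb{Q}_\infty\bigl(\prod_p(\mathbb{F}_p)_\infty Y\bigr)$ is weakly contractible. Its homotopy type is controlled by the $\mathbb{Q}$-prounipotent completion of $\pi_1\bigl(\prod_p(\mathbb{F}_p)_\infty Y\bigr)\cong\prod_p F_p$, and I claim this completion is trivial. Indeed, by Definition \ref{d4} the $\mathbb{Q}$-prounipotent completion is assembled from Zariski-dense homomorphisms into groups of $k$-points of unipotent $\mathbb{Q}$-group schemes; by Example \ref{e1.1} such a group of points is torsion-free and uniquely divisible, hence contains no nontrivial compact subgroup. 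Since $\prod_p F_p$ is profinite, the image of any continuous homomorphism is a compact subgroup and is therefore trivial, whence $(\prod_p F_p)^{\wedge}_\mathbb{Q}=1$ and $\mathbb{Q}_\infty\bigl(\prod_p(\mathbb{F}_p)_\infty Y\bigr)\simeq *$.

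With the cross-term contractible, the homotopy pullback degenerates to the product, yielding a weak equivalence $\mathbb{Z}_\infty Y\simeq \mathbb{Q}_\infty Y\times\prod_p(\mathbb{F}_p)_\infty Y$. Both factors are aspherical by the first paragraph, and a product of aspherical spaces is aspherical (its higher homotopy groups split as direct sums of the vanishing higher homotopy of the factors), so $\mathbb{Z}_\infty S_1K(X_1|Y_1)$ is aspherical. Should the literal contractibility of the corner prove delicate, or the fracture square require extra justification for the a priori non-nilpotent $Y$, the same conclusion follows from the Mayer--Vietoris long exact sequence of the homotopy pullback: the vanishing of $\pi_2$ of the two side corners together with the vanishing of $\pi_3$ of the cross-term forces $\pi_2(\mathbb{Z}_\infty Y)=0$.
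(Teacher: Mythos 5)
There is a genuine gap, and it sits exactly where you predicted it would: the weak contractibility of the cross-term $\mathbb{Q}_\infty\bigl(\prod_p(\mathbb{F}_p)_\infty Y\bigr)$ is false. Your compactness argument conflates continuous homomorphisms with abstract ones: Definition \ref{d4} quantifies over \emph{all} Zariski-dense homomorphisms of the abstract group, and a profinite group admits plenty of nonzero abstract homomorphisms to $\mathbb{Q}$-vector spaces --- for instance $\mathbb{Z}_p\hookrightarrow\mathbb{Q}_p$ followed by a $\mathbb{Q}$-linear projection onto $\mathbb{Q}$ (Hamel basis). Concretely, $H_1\bigl(\prod_p F_p;\mathbb{Q}\bigr)\cong\bigl(\prod_p F_p\bigr)^{ab}\otimes\mathbb{Q}\neq 0$, so already the first stage of the rational lower central tower of $\prod_p F_p$ is nontrivial, $(\prod_p F_p)^{\wedge}_{\mathbb{Q}}\neq 1$, and $\pi_1$ of the cross-term cannot vanish. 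The classical sanity check is $Y=S^1$: there the cross-term is $K(\mathbb{A}_f,1)$ with $\mathbb{A}_f=\mathbb{Q}\otimes\prod_p\mathbb{Z}_p$ the finite adeles, and it is precisely the arithmetic of $\mathbb{Q}\rightarrow\mathbb{A}_f\leftarrow\widehat{\mathbb{Z}}$ that cuts the pullback down to the circle; your degenerate product formula $\mathbb{Z}_\infty Y\simeq\mathbb{Q}_\infty Y\times\prod_p(\mathbb{F}_p)_\infty Y$ would compute $\mathbb{Z}_\infty S^1$ as $K(\mathbb{Q}\times\widehat{\mathbb{Z}},1)$, which is absurd. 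A second, independent defect: even if the prounipotent completion of $\pi_1$ were trivial, $\mathbb{Q}_\infty$ of a space is not governed by that completion alone outside the nilpotent finite-type setting, so triviality of the completion would not yield contractibility of the corner.

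What survives is the skeleton: at top level your route is the paper's, since the paper's entire proof of Corollary \ref{c5} is the one sentence invoking the arithmetic square after Corollary \ref{c4}, and your Mayer--Vietoris fallback is the correct mechanism --- if all three corners of the square are aspherical, the sequence $\pi_{n+1}(C)\to\pi_n(P)\to\pi_n(A)\oplus\pi_n(B)$ kills $\pi_n(P)$ for $n\geq 2$. But this needs \emph{asphericity} of the cross-term, not its contractibility, and your only input about the cross-term is the false contractibility claim; the fallback paragraph silently reuses it when it quotes the vanishing of $\pi_3$ of the cross-term. To repair the argument you must (i) establish asphericity of $\mathbb{Q}_\infty\bigl(\prod_p(\mathbb{F}_p)_\infty Y\bigr)$, e.g.\ by analyzing $\mathbb{Q}_\infty$ of the product of classifying spaces of the free pro-$p$ groups produced by Corollary \ref{c4}, and (ii) justify the fracture square itself for the non-nilpotent space $Y=S_1K(X_1|Y_1)$, since Bousfield--Kan prove it only for nilpotent spaces of finite type. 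Neither is supplied, and (ii) in particular is not repaired by the fallback, because Mayer--Vietoris presupposes that $\mathbb{Z}_\infty Y$ \emph{is} the homotopy pullback.
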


\begin{remark}\label{rem5}
In \cite{BDH} it was shown (see also \cite{D}) that any homotopy type $ X $ can be represented by a pair of discrete groups $ (G_X, P_X) $, where $ P_X $ is a normal perfect subgroup in $ G_X $ (a group $P$ is said to be perfect if $P=[P,P]$). 
In more detail, $ X $ can be recovered in the homotopy category from the Quillen plus-construction $ (K (G_X, 1), P_X)^+ $. 
Thus, the category of connected $ CW $-complexes and pointed homotopy classes of maps is equivalent to the category of fractions of the category of pairs $ (G, P) $, where $ G $ is a group and $ P $ is a perfect normal subgroup. 

One can think of Bousfield-Kan $ R $-completion as a homotopically reasoned functorial way of replacing the usual homotopy type with a homotopy type that escapes perfect subgroups.
Corollaries \ref{c4} and \ref{c5} show that ``$ R$-extraction'' of perfect subgroups turns finite subrepresentations of contractible presentations into aspherical ones and thus pushing Whitehead's conjecture now to the plane of perfect subgroups. 
It is noteworthy that in the category of prounipotent groups, where there are no nontrivial perfect subgroups, asphericity is completely determined, as it should be, with the help of $K (G, 1)$  (namely Propositions \ref{t3},\ref{t2}) by the cohomological dimension.
\end{remark}

The author is  sincerely gratefull to the referee for insisting that I not omit an application of the main result. 
The author would also like to thank the referee for the useful reference \cite[Proposition 2.6.]{CCH}, which suggests another way to get some of the paper's results in the pro-$p$-case.

%
%



\bibliography{Mikhovich_Whitehead_Arxiv1}{}
\bibliographystyle{plain}

\end{document}